\documentclass[11pt,a4paper, envcountsame,mathserif]{amsart}
\usepackage[usenames,dvipsnames]{color}

\usepackage[utf8]{inputenc}
\usepackage{pdfpages}

 \usepackage[colorlinks,citecolor=blue,urlcolor=blue, linkcolor=blue, backref]{hyperref}

\usepackage[capitalise]{cleveref}		

 \usepackage{amsmath, amssymb, xspace}
 \usepackage{graphicx}

 \usepackage[all]{xy}

\usepackage{tikz}
\usetikzlibrary{arrows,snakes,positioning,backgrounds,shadows}
\usepackage{stmaryrd}

\newtheorem{theorem}{Theorem}[section]

\newtheorem{thm}[theorem]{Theorem}

\newtheorem{fact}[theorem]{Fact}

\newtheorem{proposition}[theorem]{Proposition}

\newtheorem{prop}[theorem]{Proposition}

\newtheorem{claim}[theorem]{Claim}

\newtheorem{conjecture}[theorem]{Conjecture}

\newtheorem{lemma}[theorem]{Lemma}		

\newtheorem{corollary}[theorem]{Corollary}

\newtheorem{cor}[theorem]{Corollary}

\newtheorem{question}[theorem]{Question}

\theoremstyle{definition}
\newtheorem{definition}[theorem]{Definition}

\newtheorem{remark}[theorem]{Remark}

\newcommand{\KO}{\mathcal{O}}
\newcommand{\DII}{\Delta^0_2}
\newcommand{\NN}{{\mathbb{N}}}

\newcommand{\ZZ}{{\mathbb{Z}}}
\newcommand{\sub}{\subseteq}
\newcommand{\sN}[1]{_{#1\in \NN}}
\newcommand{\uhr}[1]{\! \upharpoonright_{#1}}

\newcommand{\SI}[1]{\Sigma^0_{#1}}
\newcommand{\PI}[1]{\Pi^0_{#1}}

\newcommand{\bi}{\begin{itemize}}
\newcommand{\ei}{\end{itemize}}
\newcommand{\bc}{\begin{center}}
\newcommand{\ec}{\end{center}}

\newcommand{\Halt}{{\ES'}}
\newcommand{\ES}{\emptyset}

\newcommand{\tp}[1]{2^{#1}}
\newcommand{\ex}{\exists}
\newcommand{\fa}{\forall}

\newcommand{\la}{\langle}
\newcommand{\ra}{\rangle}

\newcommand{\seqcantor}{2^{ \NN}}

\newcommand{\cantor}{\seqcantor}

\newcommand{\Opcl}[1]{[#1]^\prec}
\newcommand{\leT}{\le_{\mathrm{T}}}

\newcommand{\MLR}{\mbox{\rm \textsf{MLR}}}

\newcommand{\n}{\noindent}

\newcommand{\vsp}{\vspace{6pt}}
\newcommand{\leb}{\mathbf{\lambda}}

\newcommand{\sss}{\sigma}
\newcommand{\aaa}{\alpha}

\newcommand{\set}[2]{\{#1 \colon #2\}}

\newcommand \seq[1]{{\left\langle{#1}\right\rangle}}

\newcommand\+[1]{\mathcal{#1}}

\newcommand{\wt}{\widetilde}
\newcommand{\ol}{\overline}

\newcommand{\ape}{\, \hat{\ } \, }

\newcommand{\lra}{\leftrightarrow}
\newcommand{\LR}{\Leftrightarrow}
\newcommand{\RA}{\Rightarrow}

\newcommand{\DA}{\downarrow}

\newcommand{\sssl}{\ensuremath{|\sigma|}}

\newcommand{\range}{\ensuremath{\mathrm{range}}}

\def\uh{\upharpoonright}

  \DeclareMathOperator{\Img}{Im}

  \newcommand{\CR}{\mbox{\rm \textsf{CR}}}




\newcommand{\frb}{\mathfrak{b}}

\newcommand{\fra}{\mathfrak{a}}
\newcommand{\fru}{\mathfrak{u}}

\numberwithin{equation}{section}
\renewcommand{\hat}{\widehat}
\begin{document}

\title{Logic Blog 2019}

 \author{Editor: Andr\'e Nies}

\maketitle


 {
The Logic   Blog is a shared platform for
\bi \item rapidly announcing  results and questions related to logic
\item putting up results and their proofs for further research
\item parking results for later use
\item getting feedback before submission to  a journal
\item foster collaboration.   \ei

Each year's   blog is    posted on arXiv a 2-3 months after the year has ended.
\vsp
\begin{tabbing}

 \href{http://arxiv.org/abs/1902.08725}{Logic Blog 2018} \ \ \ \   \= (Link: \texttt{http://arxiv.org/abs/1902.08725})  \\
 
 \href{http://arxiv.org/abs/1804.05331}{Logic Blog 2017} \ \ \ \   \= (Link: \texttt{http://arxiv.org/abs/1804.05331})  \\
 
 \href{http://arxiv.org/abs/1703.01573}{Logic Blog 2016} \ \ \ \   \= (Link: \texttt{http://arxiv.org/abs/1703.01573})  \\
 
  \href{http://arxiv.org/abs/1602.04432}{Logic Blog 2015} \ \ \ \   \= (Link: \texttt{http://arxiv.org/abs/1602.04432})  \\
  
  \href{http://arxiv.org/abs/1504.08163}{Logic Blog 2014} \ \ \ \   \= (Link: \texttt{http://arxiv.org/abs/1504.08163})  \\

   \href{http://arxiv.org/abs/1403.5719}{Logic Blog 2013} \ \ \ \   \= (Link: \texttt{http://arxiv.org/abs/1403.5719})  \\

    \href{http://arxiv.org/abs/1302.3686}{Logic Blog 2012}  \> (Link: \texttt{http://arxiv.org/abs/1302.3686})   \\

 \href{http://arxiv.org/abs/1403.5721}{Logic Blog 2011}   \> (Link: \texttt{http://arxiv.org/abs/1403.5721})   \\

 \href{http://dx.doi.org/2292/9821}{Logic Blog 2010}   \> (Link: \texttt{http://dx.doi.org/2292/9821})  
     \end{tabbing}

\vsp

\n {\bf How does the Logic Blog work?}

\vsp

\n {\bf Writing and editing.}  The source files are in a shared dropbox.
 Ask Andr\'e (\email{andre@cs.auckland.ac.nz})  in order    to gain access.

\vsp

\n {\bf Citing.}  Postings can be cited.  An example of a citation is:

\vsp

\n  H.\ Towsner, \emph{Computability of Ergodic Convergence}. In  Andr\'e Nies (editor),  Logic Blog, 2012, Part 1, Section 1, available at
\url{http://arxiv.org/abs/1302.3686}.}

%

%

 
The logic blog,  once it is on  arXiv,  produces citations on Google Scholar.
%
\newpage
\tableofcontents

\part{Computability theory and randomness}

 \newcommand{\tr}{\mbox{\rm \textsf{Tr}}}

\newcommand{\Malg}[1]{M_{#1}}

\section{Barmpalias and Nies: a  randomness notion between ML and KL-randomness}

Jason Rute \cite[Section 10]{Rute:16} introduced  possible strengthenings of  Kolmo\-gorov-Loveland (KL)-randomness that are still implied by ML-randomness. In fact  he worked in  the general setting of computable probability spaces, where he also defined a notion of computable randomness.  
The following is one of his notions. It relies on  the little explored notion of  partial computable randomness;  see \cite[Ch.\ 7]{Nies:book}.  (A betting strategy can be undefined off the sequences it has to succeeds on, which allows for potentially indefinite waiting before making a bet.) 
\begin{definition} $Z \in \cantor$ is called \emph{Rute random}  if $\Phi(Z)$ is partial computably random for each  measure preserving Turing functional $\Phi$. 

A pair $\+ R = (\Phi, L)$ of a m.p.\ Turing functional and a  partial computable martingale will be called a \emph{Rute betting strategy}. We say that $\+ R$ is total if both $\Phi$ and $L$ are total functions. 
\end{definition}

To say that $\Phi$ is  measure preserving means that $\leb \Phi^{-1}(\+ C) = \leb \+ C$ for each measurable $\+ C \sub \cantor$. It suffices to require this for cylinders $\+ C = \Opcl \sss$ where $\sss$ is any string. Such Turing functionals are almost total: the domain is a conull $\PI 2$ set. So $\Phi(Z)$ is defined for any Kurtz ``random" $Z$. Its  kernel $\{\la X, Y \ra \colon \Phi^X= \Phi^Y \downarrow\}$ is a $\PI 2$ equivalence relation with all classes null.     $\Phi$ is given by a binary tree of  uniformly $\SI 1$ sets $U_\sss = \Phi^{-1}(\Opcl \sss)$ such that $\leb U_\sss = \tp{-\sssl}$. 

If $\Phi$ is total (i.e., a  tt functional) then $\Phi$  is   onto by a  compactness argument. In this case the kernel is $\PI 1$.  An easy example of a m.p.\ total function is the shift functional $T$ where $T(Z)$ is $Z$ with the first bit erased. For a more interesting example, view bit sequences as $2$-adic numbers, and consider a computable unit $r \in \ZZ_2^*$ (anything that ends in $1$). Then $\Phi^Z = rZ$ is a m.p.\ total Turing functional, with inverse $\Psi^Z = r^{-1} Z$.  This example can be generalised to sequences over a $p$-bit alphabet,  for a prime $p$.

Each scan rule in the sense of \cite{Merkle.ea:06, Nies:book} induces a measure preserving tt functional  on Cantor space. So Rute randomness  implies KL-randomness. 

The sets $\Phi^{-1} ( \{ X \colon L(X)> \tp m\})$ form a ML-test that succeeds on each  $Z$ on which  the strategy  does.  So ML-randomness implies Rute randomness.

  \begin{fact} Given a Rute strategy  $\+ R = \la \Phi, L \ra$ with $\Phi$ total but possibly $L$ partial, there  are total Rute strategies $\+ R_0, \+ R_1$ such that if $\+ R$ succeeds on $Z$ then one of the $\+ R_i$ succeeds as well. \end{fact}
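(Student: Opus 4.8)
The plan is to classify, along the sequence $Y=\Phi(Z)$ that is fed to $L$, whether $L$ builds the capital it needs ``quickly'' or not, and to deal with the two cases by two separate total strategies; fix once and for all a computable order $g$, say $g(n)=n$. For $\+ R_0$ we keep the functional $\Phi$ and let $L_0$ be the \emph{$g$-bounded totalisation} of $L$: put $L_0(\estring)=1$, and at a string $\sigma$, if both $L(\sigma0)$ and $L(\sigma1)$ halt within $g(|\sigma|)$ steps then let $L_0$ copy the relative bet $L$ makes at $\sigma$, namely $L_0(\sigma i)=L_0(\sigma)\cdot L(\sigma i)/L(\sigma)$; otherwise put $L_0(\sigma0)=L_0(\sigma1)=L_0(\sigma)$. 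Since $L$ is a martingale, whenever the first clause applies we have $L(\sigma)\DA$ and $2L(\sigma)=L(\sigma0)+L(\sigma1)$, so $L_0$ is a total computable martingale. Call a node $\tau$ \emph{$g$-fast} if $L(\tau0)$ and $L(\tau1)$ both converge within $g(|\tau|)$ steps and \emph{slow} otherwise; then $L(Y\uh n)=L_0(Y\uh n)\cdot\prod_{\tau\prec Y\uh n,\, \tau\text{ slow}}L(\tau Y(|\tau|))/L(\tau)$. Hence if $L$ succeeds on $Y$ and the product of the slow betting factors along $Y$ stays bounded, then $L_0$ also succeeds on $Y$ and $\+ R_0=\la\Phi,L_0\ra$ succeeds on $Z$. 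The remaining case is that this product of slow factors has $\limsup=\infty$.

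For that case the natural move is to take $\+ R_1=\la\Psi\circ\Phi,\,L_1\ra$, where $\Psi$ is a total measure preserving functional, built uniformly from $L$ (so that $\Psi\circ\Phi$ is again total and measure preserving), whose job is to \emph{re-present} the bits of its input: it outputs the bit sitting at each $g$-fast node of $L$ straight away, but it parks the bit sitting at a slow node and releases it only after the corresponding pair of $L$-values has been computed. Along the sequences that reach this case, $\Psi(Y)$ is then a reordering of $Y$ along which every bet of $L$ has become ``fast'', and $L_1$ is defined so as to reconstruct this reordering from what it has seen and to replay at each released bit the bet $L$ makes at the node it came from; thus $L_1$ recaptures on $\Psi(Y)$ the capital that $L$ gains on $Y$ from its slow bets, and $\+ R_1$ succeeds on $Z$.

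The heart of the matter — and where I expect essentially all the work to lie — is making $\Psi$ \emph{simultaneously} total and measure preserving (and reconstructible by $L_1$). A total measure preserving Turing functional is automatically a tt functional with a computable use bound, so $\Psi$ cannot simply stall for an unbounded number of stages while it waits for $L$; it must genuinely permute its input, and an input-dependent permutation of Cantor space need not be measure preserving. The queueing discipline must therefore be set up so that whether a position is parked, and the stage at which a parked bit is released, depend only on strictly earlier bits of the input and never on the parked bit itself — this is what keeps the induced rearrangement predictable, hence measure preserving, and visible to $L_1$ — while still ensuring that every parked bit is eventually released and every position eventually emitted, so that $\Psi$ stays total. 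Reconciling these demands — in particular arranging the forced-release deadlines for parked bits so that totality does not wreck $L_1$'s recovery of the slow bets on the ``all bets slow'' sequences — together with the attendant measure-preservation bookkeeping, is the crux; by contrast the construction of $\+ R_0$ is routine.
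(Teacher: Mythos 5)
You have the right shape of statement to prove, and your $\+ R_0$ (the time-bounded totalisation of $L$, handling sequences on which the slow bets contribute only a bounded factor) is fine. But the proposal has a genuine gap: the second strategy, which carries the whole weight of the argument, is never constructed, and you yourself flag its construction as the unresolved ``crux''. Moreover the specific architecture you propose for it runs into exactly the obstruction you name. A total measure-preserving functional is automatically a tt functional with a computable use bound, so $\Psi$ cannot release a parked bit ``after the corresponding pair of $L$-values has been computed'': on inputs where $L$ diverges (or converges too slowly) at all but finitely many nodes, that discipline yields only a finite output, so $\Psi$ is not total; while any computable forced-release deadline makes $L_1$ bet blind at precisely those slow nodes whose gains $\+ R_1$ was supposed to recapture, since $L_1$, being total computable, can only simulate $L$ for a number of steps bounded in terms of what it has read. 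So within a monotone ``park-and-release'' rearrangement of a single stream there is no evident way to reconcile totality, measure preservation and recovery of the slow bets, and your case split (bounded versus unbounded product of slow factors) does not by itself suggest one.

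The paper's proof avoids this entirely: it invokes the result of Merkle, Miller, Nies, Reimann and Stephan that a partial computable martingale (viewed as a KL-strategy with the identity scan rule) is covered by \emph{two total KL-strategies} $\+ K_i = \la \+ S_i, B_i \ra$, and then uses the fact, recorded earlier in the section, that a total scan rule induces a measure-preserving tt functional, so that $\Phi_i = \+ S_i \circ \Phi$ together with $B_i$ gives the required total Rute strategies. The idea you are missing is the one inside that cited result: the delay mechanism is non-monotonic scanning, with each of the two strategies reserving an infinite set of positions on which it never bets but which it can keep reading while waiting for $L$'s computations to converge. No deadline is ever needed --- if a computation never converges the strategy simply keeps consuming reserved positions, which keeps the scan rule total and the induced functional total and measure preserving --- and the success set splits according to which of the two reserved/betting parts accumulates an unbounded multiplicative gain, rather than according to fast versus slow bets. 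If you want a self-contained proof, reconstruct that two-halves argument (or cite it, as the paper does); as it stands, your plan stops exactly where the real work begins.
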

  
  \begin{proof} We use the similar  result from  \cite{Merkle.ea:06} that given a KL-strategy $\+ K$   (also see \cite[Ex.\ 7.6.25]{Nies:book}) we can build two total KL-strategies $\+ K_0, \+ K_1$ that together cover the success set of $\+ K$.

  In our case $\+ K$ is simply $L$ (with the identity scan rule). We obtain scan rules $\+ K_i = \la \+ S_i, B_i\ra$ Let  $\Phi_i=\+ S_i \circ \Phi$ which are  m.p.\ total Turing functionals. Now the $\+ R_i $ given by $\Phi_i, B_i$ are as needed.
   \end{proof}  

Recall that each KL-betting strategy fails on some c.e.\ set (e.g \cite{Merkle.ea:06}. This argument doesn't quite go through. However the following is easy. 
 
\begin{prop} Each Rute strategy $(\Phi, L)$  fails on a non-Kurtz random. \end{prop}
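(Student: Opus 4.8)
The plan is to case on whether the measure‑preserving functional $\Phi$ is total, and in each case to exhibit a concrete non‑Kurtz‑random $Z$ together with an obvious reason why $\+ R=(\Phi,L)$ cannot succeed on it. Throughout I use the standard characterization that $Z$ is \emph{not} Kurtz random iff $Z\in\bigcap_n V_n$ for some uniformly clopen, nested sequence $(V_n)\sN n$ with $\measure V_n\le 2^{-n}$; it is in fact enough that the $\measure V_n$ be rationals, computable in $n$, tending to $0$, since one can then thin the sequence.

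\emph{Case $\Phi$ total.} Then $\Phi$ is onto, and, $\Phi$ being a tt functional, each $U_\tau=\Phi^{-1}(\Opcl\tau)$ is clopen uniformly in $\tau$, with $\measure U_\tau=2^{-|\tau|}$. First I would produce a \emph{computable} real $Y$ on which $L$ does not succeed, via the dichotomy: if $L$ is not total, pick a string $\rho$ with $L(\rho)\!\uparrow$ and set $Y=\rho 0^\infty$, so $L$ is undefined along $Y$; if $L$ is total, let $Y(n)=\mu i\,[L((Y\uh n)i)\le L(Y\uh n)]$, which exists by the martingale inequality, giving $L(Y\uh n)\le L(\estring)$ for all $n$, so $L$ stays bounded along $Y$. (In the first subcase we cannot compute $Y$, but its mere existence as a computable real is all that is needed.) Now $\Phi^{-1}(\{Y\})=\bigcap_n U_{Y\uh n}$ is nonempty because $\Phi$ is onto, and $(U_{Y\uh n})_n$ is a Kurtz null test; hence every $Z\in\Phi^{-1}(\{Y\})$ is non‑Kurtz‑random. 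For such $Z$ we have $\Phi(Z)=Y$, on which $L$ does not succeed, so $\+ R$ fails on $Z$.

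\emph{Case $\Phi$ not total.} Then for some $n$ the c.e.\ open set $W=\bigcup_{|\tau|=n}U_\tau$ is a proper subset of $\cantor$; as it is a disjoint union, $\measure W=\sum_{|\tau|=n}2^{-n}=1$, so $P=\cantor\setminus W$ is a nonempty $\PI 1$ class of measure $0$. Writing $W=\bigcup_s C_s$ with $C_s$ clopen, increasing, and $\measure C_s$ rational and computable in $s$, the clopen sets $V_k=\cantor\setminus C_{g(k)}$, where $g(k)=\mu s\,[\,1-\measure C_s\le 2^{-k}\,]$ (computable since $1-\measure C_s\searrow 0$), form a Kurtz null test with $P\subseteq\bigcap_k V_k$; hence every $Z\in P$ is non‑Kurtz‑random. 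But for $Z\in P$ we have $Z\notin U_\tau$ for all $|\tau|=n$, so $\Phi(Z)\!\uparrow$ and $\+ R$ fails on $Z$ trivially.

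The only points requiring care are the two routine massaging steps: reducing a partial $L$ to the total/undefined dichotomy, and verifying that the $\PI 1$ null class of the second case is genuinely covered by a Kurtz test — which is exactly where one uses that the approximating clopen sets have rational, effectively converging measures that can be thinned to get the $2^{-k}$ bound. I do not anticipate a serious obstacle; the reason the proposition is ``easy'' is that one never has to defeat $L$ on a moving target: either $\Phi$ has a fixed computable output whose entire $\Phi$‑preimage is automatically a Kurtz test, or $\Phi$ already diverges on a whole $\PI 1$ null class.
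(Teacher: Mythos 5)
Your proof is correct and takes essentially the same route as the paper's: produce a computable sequence $Y$ on which $L$ does not succeed (splitting on whether $L$ is total), pull it back through the total $\Phi$ to the nonempty $\Pi^0_1$ null class $\bigcap_n U_{Y\uh n}$ whose members are non-Kurtz-random, and dispose of a non-total $\Phi$ via the null $\Pi^0_1$ set where it diverges, exactly as the paper does (though you spell out the Kurtz-test details it leaves implicit). The one small caveat is that your selection rule $Y(n)=\mu i\,[L((Y\uh n)i)\le L(Y\uh n)]$ is computable only under the standard convention that partial computable martingales are rational-valued; the paper instead chooses a side on which $L$ increases by at most $2^{-n}$, a $\Sigma^0_1$ condition, which makes the construction of the computable path work even for martingales with computable real values.
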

\begin{proof} We may assume that $\Phi$ is total, else it fails by not being defined. So we can also assume that $L$ is total.  Let $R$ be a computable path along which $L$ is bounded (say in the $n$-th bet, along $R$ the martingale  $L$ goes up by at most $\tp{-n}$; this is a $\SI 1$ event which has to happen on at least one 1-bit extension of a string). Then $\Phi^{-1}(R)$ is a $\PI 1$ null class on which the Rute strategy fails. \end{proof}  
As a consequence, if we wanted to show that Rute = ML, we'd need at least two strategies to cover all non-MLR randoms. 

\begin{question} Determine lowness for Rute randomness. Note that it is contained in $\text{Low} (\MLR, \CR)$ which coincides with  the $K$-trivials \cite{Nies:AM}. \end{question}

\section{Nies: For non-high states, strong Solovay tests are as powerful as general quantum ML tests}

Recall that for a non-high bit sequence $Z$, Schnorr randomness implies ML-randomness (e.g. \cite[Ch.\ 3]{Nies:book}). Here we give a similar argument in the setting of randomness for infinite sequences of qubits \cite{Nies.Scholz:18}.  

We use the convention as in the final section of \cite{Nies.Scholz:18} that all states and projections are elementary, i.e.\ the relevant matrices consist of algebraic complex numbers. So it is OK to directly use a state $\rho$ as an oracle. (In general we would have to work with elementary approximations to the density matrices $\rho\uhr n$, similarly as is done in computable analysis to deal with, say, sequences of reals).

\begin{definition}[\cite{Nies.Scholz:18}, 3.14] \label{df:qSL}  \mbox{}
\bi \item A \emph{quantum Solovay  test}    is an effective  sequence $\seq {G_r}\sN r$ of   quantum $\SI 1$ sets such that $\sum_r \tau(G_r) < \infty$.
  \item We say that the test is \emph{strong} if the $G_r$ are given as projections; that is,  from $r$ we can compute $n_r$ and a   matrix of algebraic numbers  in   $\Malg {n_r}$ describing $G_r =  p_r$. 
%
\item For  $\delta \in (0,1)$, we     say that $\rho$ \emph{fails} the quantum  Solovay  test \emph{at order} $\delta$   if   $ \rho(G_r) > \delta$ for infinitely many  $r$; otherwise $\rho$ \emph{passes} the qML test \emph{at order} $\delta$. 
\item We say that $\rho$ is \emph{quantum Solovay-random}  if it passes each quantum Solovay  test  $\seq {G_r}\sN r$ at each positive order, that is,  $\lim_r \rho(G_r)=0$.  \ei
 \end{definition}

Tejas Bhojraj showed that q-Solovay tests are equivalent to qML tests (but the order at which  the test succeeds changes). However, it is likely that strong q-Solovay tests are indeed more restricted, hence the significance of the following \emph{Resultatchen}.

\begin{prop} Let $\rho$ be a non-high state on the usual CAR algebra $M_{ \infty}$. If $\rho$ fails some qML test $\seq {G^r} \sN r$  at order $\delta >0$, then it fails a strong Solovay test  $\seq {q_r}$ at the same order. Moreover we can achieve that $\sum_r \tr(q_r)$ is a computable real.  \end{prop}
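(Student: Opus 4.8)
The strategy is to transport the classical argument that, for a non-high sequence, Schnorr randomness implies ML-randomness (e.g.\ \cite[Ch.\ 3]{Nies:book}) into the quantum setting of \cite{Nies.Scholz:18}. First I would unfold each quantum $\SI 1$ set $G^r$ as $\sup_i p^r_i$ for an effective, $\le$-increasing sequence $(p^r_i)_{i\in\NN}$ of elementary projections, recalling that for a qML test $\tau(G^r)=\sup_i\tau(p^r_i)\le 2^{-r}$ and that, by definition, $\rho(G^r)=\sup_i\rho(p^r_i)$ with $i\mapsto\rho(p^r_i)$ non-decreasing. Put $S=\set{r}{\rho(G^r)>\delta}$, which is infinite by hypothesis. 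Using the elementariness convention to treat $\rho$ as an oracle, I would then produce, by a dovetailed search, a partial $\rho$-computable function $g$ with $\dom g=S$ such that $\rho\bigl(p^r_{g(r)}\bigr)>\delta$ for every $r\in S$: for $r\in S$ some finite level already strictly exceeds $\delta$, so the search halts, whereas off $S$ it diverges. Note that $S=\dom g$ is $\rho$-c.e.

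The heart of the proof will be the following consequence of non-highness: there is a \emph{total computable} $f$ with $f(r)\ge g(r)$ for infinitely many $r\in S$. To see this, suppose otherwise, so that every computable $f$ satisfies $f(r)<g(r)$ for all but finitely many $r\in S$. Fixing a $\rho$-enumeration of $S$, define a total $\rho$-computable $h$ by $h(r)=g(\tilde r)$, where $\tilde r$ is the first element of $S$ enumerated with $\tilde r\ge r$ (such $\tilde r$ exists since $S$ is infinite). Given any computable $f$, pass to its monotone majorant $\hat f(r)=\max_{r'\le r}f(r')$; for all large $r$ the index $\tilde r\ge r$ lies in $S$ beyond the finite exceptional set, whence $h(r)=g(\tilde r)>\hat f(\tilde r)\ge\hat f(r)\ge f(r)$. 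Thus $h$ would be a $\rho$-computable function dominating every computable function, so $\rho$ would be high by Martin's domination characterisation, contrary to hypothesis. This proves the claim, and it is the step I expect to be the main obstacle: in the classical statement $Z$ lies in $\bigcap_nU_n$, so the modulus function is total and the choice of $f$ is immediate, whereas here $\rho$ merely \emph{fails} the qML test, so $g$ is partial and one must argue along the infinite $\rho$-c.e.\ set $S$.

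Finally I would set $q_r:=p^r_{f(r)}$; from $r$ one computes $f(r)$ and reads off the algebraic matrix of $p^r_{f(r)}$, so $\seq{q_r}$ is an effective sequence of projections, i.e.\ a strong quantum Solovay test. Since $\tr(q_r)\le\tau(G^r)\le 2^{-r}$, we get $\sum_r\tr(q_r)\le 2<\infty$; moreover the tail beyond $N$ is at most $2^{-N+1}$ while the finitely many leading terms are uniformly computable reals, so $\sum_r\tr(q_r)$ is a computable real, which gives the ``moreover''. And for each of the infinitely many $r\in S$ with $f(r)\ge g(r)$, monotonicity yields $\rho(q_r)=\rho\bigl(p^r_{f(r)}\bigr)\ge\rho\bigl(p^r_{g(r)}\bigr)>\delta$, so $\rho$ fails $\seq{q_r}$ at order $\delta$, as required. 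The remaining points — unfolding $G^r$ into elementary projections, the measure bookkeeping, and the uniform semidecidability of ``$\rho(p^r_i)>\delta$'' from the oracle $\rho$ — are routine within the framework of \cite{Nies.Scholz:18}.
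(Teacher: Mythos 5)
Your proof is correct, and its core is the same as the paper's: use non-highness to get a computable function that majorises, at infinitely many $r$, the $\rho$-computable ``level'' function picking out a finite stage of $G^r$ witnessing $\rho(G^r)>\delta$, and then let $q_r$ be the corresponding projection $p^r_{h(r)}$; the verification of $\sum_r \operatorname{tr}(q_r)$ being computable (tails bounded by $2^{-N+1}$, terms uniformly algebraic) is exactly what is needed, and the paper leaves it implicit. The genuine difference is in how the failure hypothesis is read. In the qML definition of \cite[3.5]{Nies.Scholz:18} failure at order $\delta$ is a ``for every $r$'' (infimum) condition, so the paper's modulus $f(r)=\mu n\,[\operatorname{tr}(\rho\upharpoonright n\, p^r_n)>\delta]$ is total and Martin's characterisation of highness applies in one line. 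You instead assume only the Solovay-style failure ($\rho(G^r)>\delta$ for infinitely many $r$), which forces your modulus $g$ to be partial with $\rho$-c.e.\ infinite domain $S$, and you supply the extra lemma that non-highness still yields a computable $f$ with $f(r)\ge g(r)$ for infinitely many $r\in S$, by totalising $g$ along a $\rho$-enumeration of $S$ and using the monotone majorant trick; that lemma and its use are correct. So your argument is a bit longer than the paper's but proves a formally stronger statement: even a mere Solovay-style failure of the qML test by a non-high state already yields a strong Solovay test failed at the same order, whereas the paper's one-liner relies on totality of the modulus and hence on the ML-style failure. Only a cosmetic caveat, shared with the paper's own proof: to make the search ``$\rho(p^r_i)>\delta$'' semi-decidable in $\rho$ one should take $\delta$ rational (or computable), which is harmless since failure at order $\delta$ implies failure at any smaller rational order.
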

  
  \begin{proof} 
Given a qML-test  $\seq {G^r} = \seq {p^r_n}$ as defined in \cite[3.5]{Nies.Scholz:18} that  $\rho$ fails at order $\delta$, the  function $f(r) = \mu n . \tr (\rho \uhr n p^r_n) > \delta$ is total and satisfies  $f \le_T \rho$. Since $\rho$ is not high, there is a computable function $h$ such that $\ex^\infty r . h(r) \ge f(r)$. Let now $q_r = p^{r}_{h(r)}$. This is a strong Solovay test of the required form, and $\rho$ fails it at the same order $\delta$. \end{proof}

     \section{Franklin and Turetsky: (non)convexity of difference random degrees}

The ML-random degrees can be divided into two sets: those which compute $\mathbf{0'}$, and those which do not compute a PA-degree.  The latter correspond to the difference random degrees.  The first set is known to be convex: if $\mathbf{d}_0$ and $\mathbf{d}_1$ are MLR degrees above $\mathbf{0'}$, then every degree $\mathbf{d}_2$ with $\mathbf{d}_0 \le \mathbf{d}_2 \le \mathbf{d}_1$ is an MLR degree.  This is because every degree above $\mathbf{0'}$ contains a random.  Still, it's natural to ask whether convexity holds for the difference random degrees.  Here we construct a counterexample.  Our construction simply requires combining several existing results.

First we need the following:

\begin{proposition}
Every $\Delta^0_2$ MLR degree computes a $K$-trivial which does not have c.e.\ degree.
\end{proposition}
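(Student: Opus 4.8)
The plan is to combine a theorem producing \emph{some} noncomputable $K$-trivial below a $\Delta^0_2$ ML-random degree with structural facts about the $K$-trivial degrees that let one descend to a $K$-trivial of non-c.e.\ degree. Fix an ML-random set $Z \leq_{\mathrm T} \emptyset'$ whose Turing degree is the given $\mathbf d$.

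The first step is to find a noncomputable $K$-trivial $A \leq_{\mathrm T} Z$. If $\mathbf d \geq \mathbf{0'}$ this is immediate, as $\mathbf{0'}$ computes every $\Delta^0_2$ set and there are noncomputable $\Delta^0_2$ $K$-trivial sets (for instance c.e.\ ones). If $\mathbf d \not\geq \mathbf{0'}$, so that $Z$ is a difference random, I would invoke the theorem of Bienvenu, Greenberg, \Kuc, Nies and Turetsky that every such $Z$ computes a noncomputable $K$-trivial, in fact the ``smart'' $\Delta^0_2$ $K$-trivial --- the one lying Turing below every ML-random that computes any noncomputable $K$-trivial at all.

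The second step replaces $A$ by a $K$-trivial of non-c.e.\ degree. The general fact I would use is that $K$-triviality is downward closed under $\leq_{\mathrm T}$: a $K$-trivial set is low for ML-randomness, and if $B \leq_{\mathrm T} A$ then every $B$-ML-test is an $A$-ML-test, so $\MLR = \MLR^A \subseteq \MLR^B \subseteq \MLR$, whence $B$ too is low for ML-randomness. It therefore suffices to exhibit a set $B \leq_{\mathrm T} A$ whose Turing degree is not c.e.; such a $B$ is automatically $K$-trivial and satisfies $B \leq_{\mathrm T} A \leq_{\mathrm T} Z$, so $\mathbf d$ computes it. For the existence of $B$ I would appeal to the fact that one can embed a properly d.c.e.\ degree --- which is never c.e.\ --- below a suitable noncomputable $K$-trivial, using the downward density of the $K$-trivial degrees; more cleanly, if it is citable in this shape, one uses directly that there is a $K$-trivial of non-c.e.\ degree Turing below every noncomputable $K$-trivial, or checks that the smart $K$-trivial produced in Step~1 already has non-c.e.\ degree (by running its construction with diagonalization against all c.e.\ sets).

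I expect the entire difficulty to lie in the second step. Step~1 is a black box --- trivial in the complete case, and a known deep theorem in the difference-random case --- whereas the point is to control the Turing degree of the $K$-trivial well enough to keep it off the c.e.\ degrees while keeping it Turing below $A$, hence below $Z$. If none of the off-the-shelf statements above is available verbatim, the fallback is to rerun a coding argument relative to a cost function witnessing the $K$-triviality of $A$, coding into $A$ a set that is visibly not of c.e.\ degree; there the delicate issue is that thinning the coding set to preserve $K$-triviality must not inadvertently make it c.e.
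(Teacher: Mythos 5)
Your architecture is the same as the paper's in outline (noncomputable $K$-trivial below the random, then descend to a non-c.e.\ degree and invoke downward closure of $K$-triviality under $\le_T$), but the step you yourself identify as carrying the entire difficulty is never actually executed, and the facts you propose to quote for it do not do the job. The paper's missing ingredient is concrete and elementary: start from Ku\v{c}era's noncomputable \emph{c.e.} set $W \le_T Z$ (which is $K$-trivial by Hirschfeldt--Nies--Stephan when $Z$ is incomplete, and which in the complete case one may simply take to be any noncomputable c.e.\ $K$-trivial), apply Yates's theorem that every noncomputable c.e.\ degree bounds a \emph{minimal} degree, observe via Sacks density that a minimal degree cannot be c.e., and conclude by downward closure that any set of that minimal degree is a $K$-trivial of non-c.e.\ degree below $Z$. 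Minimal degrees do not appear anywhere in your proposal, and this is exactly the gap.

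None of your suggested substitutes closes it. (a) ``Downward density of the $K$-trivial degrees'' is false: the very argument above produces a $K$-trivial of minimal Turing degree, which bounds no noncomputable set of different degree, $K$-trivial or otherwise; so it cannot be the engine that embeds a properly d.c.e.\ degree below $A$, and the d.c.e.\ route would in any case need its own (uncited, and harder) bounding theorem. (b) ``There is a $K$-trivial of non-c.e.\ degree Turing below every noncomputable $K$-trivial'' is not an off-the-shelf citation; it is essentially the statement being proved, and its proof is the Yates/Sacks argument just described (applied to $A$ itself if $\deg(A)$ is not c.e., and to a c.e.\ set of degree $\deg(A)$ otherwise). (c) The smart $K$-trivial of Bienvenu--Greenberg--Ku\v{c}era--Nies--Turetsky is constructed as a c.e.\ set, so it does have c.e.\ degree; ``running its construction with diagonalization against all c.e.\ sets'' while keeping it below $Z$ is a new construction you have not given, not a routine check. (d) The cost-function coding fallback is only a gesture. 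Note also that your Step~1 invokes a much deeper theorem than needed in the incomplete case, and in doing so loses the c.e.-ness of the witness, which is precisely what lets Yates's theorem be applied; Ku\v{c}era plus Hirschfeldt--Nies--Stephan gives the c.e.\ $K$-trivial directly.
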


One could prove this directly, or just invoke Ku\v{c}era's result that every $\Delta^0_2$ MLR degree bounds a noncomputable c.e.\ degree, Hirschfeldt et al.'s \cite{Hirschfeldt.Nies.ea:07} results showing that said degree must be $K$-trivial, and then Yates's result that every noncomputable c.e.\ degree bounds a minimal degree (and then Nies \cite{Nies:AM}     that $K$-triviality is downwards closed under Turing reducibilty).  And I guess Sacks's density theorem to get that a minimal degree is not a c.e.\ degree.

Next, we need the following result of Greenberg and Turetsky (see here: \url{https://arxiv.org/abs/1707.00258}):

\begin{proposition}
For every $K$-trivial $A$ there is a c.e.\ $K$-trivial $B \ge_T A$ such that every ML-random which computes $A$ computes $B$.
\end{proposition}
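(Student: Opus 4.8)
This is a theorem of Greenberg and Turetsky; I only sketch a possible strategy. The plan is to combine the cost-function characterisation of $K$-triviality with a ``coherent randomness test'' construction of the kind developed by Bienvenu, Greenberg, \Kuc, Nies and Turetsky. By Nies's theorem (see \cite{Nies:AM} and \cite{Nies:book}), $A$ is $K$-trivial exactly when it obeys the standard cost function $\mathbf c$; fix a $\DII$ approximation $\seq{A_s}$ of $A$ witnessing this, so that $\sum_s \mathbf c(x_s,s) < \infty$, where $x_s$ is the least point of change of the approximation at stage $s$.

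One builds $B$ by a cost-function construction carrying two demands. For $B \ge_T A$: reserve disjoint columns $I_x$ consisting of numbers $\ge x$, and whenever the approximation changes at $x_s = x$ put a fresh element of $I_x$ into $B$, paying $\mathbf c(x,s)$ out of a global budget of size $\sum_s \mathbf c(x_s,s)$. Obedience of $A$ guarantees the budget suffices, so the coding is faithful and $A$ is recovered from $B$ by reading off the eventually stable state of each column; moreover $B$ itself then obeys a cost function equivalent to $\mathbf c$, so $B$ is $K$-trivial. The delicate demand is that every ML-random $Z$ with $A \leT Z$ computes $B$. This cannot be arranged by forcing $B \leT A$, since whenever $A$ lacks c.e.\ degree the set $B$ must lie strictly above $A$; so the reduction computing $B$ from $Z$ must exploit, beyond the mere equality $\Gamma^Z = A$, the behaviour of the \emph{use} of $\Gamma$ along $Z$ together with the randomness of $Z$. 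Concretely, for each functional $\Gamma$ one runs alongside the construction a uniformly given ML-test $\seq{V^\Gamma_m}$: an oracle enters $V^\Gamma_m$ when it looks like computing $A$ through $\Gamma$ with small use yet is about to be contradicted by an $A$-change whose corresponding $B$-enumeration it could not anticipate. The construction arranges each $B$-enumeration to charge its cost simultaneously to the $K$-triviality budget and to $\measure(V^\Gamma_m)$, so that $\seq{V^\Gamma_m}$ is a genuine test; since $Z$ is random it passes, and from that point the use of $\Gamma$ along $Z$ pins down every new $B$-enumeration, giving $B \leT Z$.

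The main obstacle is coordinating the $A$-coding with the randomness tests inside a single construction: the coding enumerates into $B$ at times dictated by the uncontrolled $\DII$ approximation of $A$, whereas both the cost bound and the tests require these enumerations to be rare and cheap, and the tests must still ``catch'' every oracle that would refute $B \leT Z$. This is handled by the box-promotion refinement of the golden-run technique: the coding is organised into nested boxes, a box being promoted --- and its contribution to $B$ consolidated --- only when it overflows; cost-function obedience of $A$ bounds the number of promotions, and that single bound controls both the cost charged to $B$ and the measure charged to the tests. Showing that the promotions terminate and that all the bookkeeping closes is the technical heart of the argument.
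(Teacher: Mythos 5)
The first thing to note is that the paper does not prove this proposition at all: it is imported as a black box, attributed to Greenberg and Turetsky (arXiv:1707.00258), and then used to assemble the counterexample to convexity. So there is no in-paper argument to measure your sketch against; the question is whether your proposal could stand as a proof on its own, and as written it cannot. Everything that makes the statement a theorem is asserted rather than established. The coding part (columns $I_x$, change-set construction, cost-function accounting showing $B$ is c.e., $B\ge_T A$, and $B$ is $K$-trivial) is standard and fine in outline, but the essential clause --- every ML-random $Z\ge_T A$ computes $B$ --- is exactly where the sketch stops being an argument. You never actually define the tests $V^\Gamma_m$ beyond the phrase ``looks like computing $A$ through $\Gamma$ with small use yet is about to be contradicted,'' and you give no accounting showing $\lambda(V^\Gamma_m)$ is bounded by the cost you charge; that bookkeeping, done uniformly in $\Gamma$ and valid whether or not $\Gamma^Z=A$, is precisely the content of the golden-run/box-promotion analysis you invoke by name. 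Moreover the step ``since $Z$ is random it passes, and from that point the use of $\Gamma$ along $Z$ pins down every new $B$-enumeration, giving $B\le_T Z$'' is a non sequitur as stated: passing a Solovay- or ML-type test only tells you $Z$ avoids (all but finitely many of) the components; converting that into an explicit $Z$-computable procedure that locates each number entering $B$ requires exhibiting the reduction and proving that any failure of it forces $Z$ into a component whose measure the construction has paid for. That implication is the theorem, not a corollary of test-passing.

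You acknowledge this yourself in the last sentence (``the technical heart of the argument''), so what you have is a plausible roadmap, consistent in flavour with the Bienvenu--Greenberg--Ku\v{c}era--Nies--Turetsky machinery that the actual Greenberg--Turetsky proof builds on, but with the load-bearing construction and its verification missing. If you want to turn it into a proof, the concrete obligations are: (i) a precise definition of the test components, uniformly in $\Gamma$ and in the index $m$, with a proof that their measures are computably bounded (so they really form tests) using only the global cost bound $\sum_s \mathbf{c}(x_s,s)<\infty$; (ii) an explicit reduction procedure $B\le_T Z$ together with the argument that its failure at some stage puts $Z$ into a designated component; and (iii) the interaction analysis showing the $A$-coding enumerations, which occur at times dictated by the uncontrolled $\Delta^0_2$ approximation, can always be charged to both budgets simultaneously. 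For the paper's purposes none of this is needed, since the proposition is simply cited; for your write-up, either carry out (i)--(iii) or present the statement, as the paper does, as an appeal to Greenberg--Turetsky.
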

Of course, if $A$ does not have c.e.\ degree, then $B$ will be strictly above $A$.  Finally we'll need this:

\begin{proposition}
If $A$ does not compute $B$, then for almost every oracle $Z$, $A \oplus Z$ does not compute $B$.
\end{proposition}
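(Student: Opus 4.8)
\emph{Proposed proof.} The plan is to prove the contrapositive in its standard measure-theoretic form: if $B \not\leT A$, then $\+ N := \set{Z \in \cantor}{B \leT A \oplus Z}$ is null, which is the classical fact that a ``random'' oracle almost never helps compute a fixed non-reducible set (de Leeuw--Moore--Shannon--Shapiro; Sacks). The argument is a majority vote. First I would suppose toward a contradiction that $\lambda(\+ N) > 0$. Since $\+ N = \bigcup_e \set{Z}{\Phi_e^{A \oplus Z} = B}$ is a countable union, countable additivity gives an index $e$ with $\lambda(S) > 0$ for $S := \set{Z}{\Phi_e^{A \oplus Z} = B}$. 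For $n \in \NN$ and $v \in \{0,1\}$ put $T_{n,v} := \set{Z}{\Phi_e^{A \oplus Z}(n)\DA = v}$; these form a uniformly $\SI{1}(A)$ family of open sets, so with oracle $A$ one can, uniformly in $n$, $v$, and any string $\tau \in \fs$, approximate the real $\lambda(T_{n,v} \cap [\tau])$ from below. Note that we never attempt to approximate $\lambda(S)$ itself, which need not be $A$-computable.

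Next I would apply the Lebesgue density theorem to $S$ to fix a string $\sigma \in \fs$ with $\lambda(S \cap [\sigma]) > \tfrac23\, \lambda([\sigma])$; any ratio strictly above $1/2$ would serve. Since $S \sub T_{n, B(n)}$ for every $n$ and $T_{n,0} \cap T_{n,1} = \ES$, this gives $\lambda(T_{n, B(n)} \cap [\sigma]) > \tfrac23\, \lambda([\sigma])$, and therefore $\lambda(T_{n, 1-B(n)} \cap [\sigma]) < \tfrac13\, \lambda([\sigma])$, for every $n$.

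This already produces an $A$-computable procedure for $B$, contradicting $B \not\leT A$. On input $n$, with oracle $A$, run in parallel the lower approximations to $\lambda(T_{n,0} \cap [\sigma])$ and to $\lambda(T_{n,1} \cap [\sigma])$, and output the first value $v$ whose approximation exceeds $\tfrac12\, \lambda([\sigma])$. This halts because the approximation for $v = B(n)$ eventually passes $\tfrac23\, \lambda([\sigma]) > \tfrac12\, \lambda([\sigma])$; and it is correct because the approximation for $v = 1-B(n)$ stays below $\tfrac13\, \lambda([\sigma]) < \tfrac12\, \lambda([\sigma])$ and so never crosses the threshold. The parameters $e$ and $\sigma$ are finitely much data, used non-uniformly, which is harmless.

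I do not expect a genuine obstacle, as this is essentially textbook. The two points that need a moment's care are the reduction to a single functional $\Phi_e$ (so that the sets $T_{n,v}$ become uniformly $\SI{1}(A)$), and the insistence on a density point with ratio \emph{strictly} above $1/2$, which is exactly what turns ``is $\lambda(T_{n,0}\cap[\sigma])$ or $\lambda(T_{n,1}\cap[\sigma])$ the larger?'' into an $A$-decidable question even though both quantities are only left-c.e.\ relative to $A$. Alternatively the statement can simply be cited as classical.
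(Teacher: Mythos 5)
Your proof is correct and follows exactly the route the paper sketches: fix an index $e$ with $\lambda\{Z : \Phi_e^{A\oplus Z}=B\}>0$, apply Lebesgue density to get a cylinder of relative measure above $\tfrac12$, and then majority vote using the $A$-effective lower approximations to the measures of $T_{n,v}\cap[\sigma]$ to compute $B$ from $A$, a contradiction. The paper gives only this sketch, and your write-up is a faithful, correctly detailed expansion of it.
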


\begin{proof}
Suppose otherwise.  Fix an $e$ such that for positive measure of oracles $Z$, $\Phi_e(A\oplus Z) = B$.  Apply Lebesgue density and then majority vote to show that $A$ computes $B$, contrary to hypothesis.
\end{proof}

We'll also need van Lambalgen's theorem.  We're ready to build the counterexample.

\smallskip

Fix $X$ an incomplete $\Delta^0_2$ random.  Fix $A \le_T X$ a $K$-trivial which does not have c.e.\ degree.  Fix $B >_T A$ such that every ML-random computing $A$ computes $B$.  For almost every oracle $Y$, $X \oplus Y$ is random, $X \oplus Y$ does not compute $\mathbf{0'}$, and $A \oplus Y$ does not compute $B$, so fix such a $Y$.

Our failure of convexity is provided by $Y \le_T A \oplus Y \le_T X \oplus Y$.  Suppose $A \oplus Y$ were Turing equivalent to a random $Z$.  Then $A\oplus Y \ge_T Z \ge_T A \oplus Y$, and thus $Z \ge_T A$.  Then $Z \ge_T B$, by choice of $B$.  So $A \oplus Y \ge_T B$, contrary to choice of $Y$.

\smallskip

It's known that convexity holds for hyperimmune-free MLR degrees. In fact, if $B$ is ML-random and of hyperimmune free degree, and $A \le_T B$ is noncomputable, then $A$ is of ML-random Turing degree using that  $A \le_{tt}B$ and an argument of Demuth. How common are failures outside those?  Does every hyperimmune MLR degree bound a failure of convexity?  Is every pair of hyperimmune MLR degrees with one strictly above the other a failure of convexity?

 
%

  \part{Group  theory and its connections to  logic}
    \section{Nies and Segal: Non-axiomatizability for classes of profinite groups}
 
  \newcommand{\UT}{\ensuremath{\mathrm{UT}_3}}
\newcommand{\Zp}{\ensuremath{\mathbb Z_p}}
\newcommand{\UTp}{\UT(\Zp)}

Expressiveness of first-order  logic for classes of groups remains an interesting  topic   because it  straddles logic and group theory and connects these areas  in novel ways. For topological  groups the topic  is particularly interesting, because in first-order  logic  (say in the language   of  groups)  one can't directly talk about open sets:   first-order means that one can only talk about elements of the structure.

 Here we show that some commonplace  properties   cannot be  axiomatised by a single sentence.   

\subsection{Number of generators} 
The following results and proofs work with small changes also in the setting of discrete groups. 
\begin{proposition} \mbox{} 

\bi \item[(a)] Being $d$-generated cannot be expressed by a single first order sentence within the profinite groups. 

\item[(b)] Being finitely generated cannot be expressed by a single first order sentence within the profinite groups.

\ei  \end{proposition}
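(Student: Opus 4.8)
My plan is to prove both parts by contradiction, in each case exhibiting two elementarily equivalent (``$\equiv$'') profinite groups that lie on opposite sides of the property; a single first-order sentence cannot separate $\equiv$-equivalent groups, so this rules out any such sentence. The underlying point is that for a profinite abelian group $A$ the minimal number of topological generators equals $\sup_p\dim_{\mathbb{F}_p}(A/pA)$, and each condition ``$\dim_{\mathbb{F}_p}(A/pA)\le d$'' is first order but of quantifier complexity growing with $p$, so their conjunction over all $p$ should not be finitely axiomatizable.

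For (b): suppose a sentence $\psi$ held of exactly the topologically finitely generated profinite groups. Each finite group $(\mathbb{Z}/2\mathbb{Z})^n$ is profinite and, being finite, topologically finitely generated, so $(\mathbb{Z}/2\mathbb{Z})^n\models\psi$ for every $n$. The group $E=\prod_{n\in\mathbb{N}}\mathbb{Z}/2\mathbb{Z}$ with the product topology is profinite --- the inverse limit of the $(\mathbb{Z}/2\mathbb{Z})^n$ --- and is not topologically finitely generated, since $(\mathbb{Z}/2\mathbb{Z})^n$ is a continuous quotient for every $n$. But as an abstract group $E$ is an infinite-dimensional $\mathbb{F}_2$-vector space, hence elementarily equivalent (in the group language) to an ultraproduct of the $(\mathbb{Z}/2\mathbb{Z})^n$ over a nonprincipal ultrafilter; by {\L}o\'s's theorem $E\models\psi$, contradicting the choice of $\psi$.

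For (a): suppose a sentence $\phi$ held of exactly the topologically $d$-generated profinite groups. I would fix primes $p_1<p_2<\cdots\to\infty$ and set $G_n=\widehat{\mathbb{Z}}^d\times C_{p_n}$. Each $G_n$ is profinite but not $d$-generated, as its continuous quotient $(\widehat{\mathbb{Z}}/p_n\widehat{\mathbb{Z}})^d\times C_{p_n}\cong(\mathbb{Z}/p_n\mathbb{Z})^{d+1}$ needs $d+1$ generators; thus $G_n\models\neg\phi$. For a nonprincipal ultrafilter $\mathcal{U}$ on $\mathbb{N}$ let $U=\prod_nG_n/\mathcal{U}$; by {\L}o\'s, $U\models\neg\phi$. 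Distributing the ultraproduct over the fixed-shape product gives $U\cong\bigl(\prod_n\widehat{\mathbb{Z}}^d/\mathcal{U}\bigr)\times Q$, where $Q=\prod_nC_{p_n}/\mathcal{U}$ is torsion-free (no nonzero element has bounded order, since $p_n\to\infty$) and divisible (every integer is a unit mod $p_n$ for cofinitely many $n$), i.e.\ a nonzero $\mathbb{Q}$-vector space. The first factor is an ultrapower of $\widehat{\mathbb{Z}}^d$, hence $\equiv\widehat{\mathbb{Z}}^d$, so $U\equiv\widehat{\mathbb{Z}}^d\times\mathbb{Q}$. The final step --- which I expect to be the main obstacle --- is the lemma $\widehat{\mathbb{Z}}^d\times\mathbb{Q}\equiv\widehat{\mathbb{Z}}^d$; granting it, $\widehat{\mathbb{Z}}^d\equiv U\models\neg\phi$ while $\widehat{\mathbb{Z}}^d$ is a $d$-generated profinite group and ought to satisfy $\phi$ --- a contradiction.

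I would prove the lemma $\widehat{\mathbb{Z}}^d\times\mathbb{Q}\equiv\widehat{\mathbb{Z}}^d$ from the Szmielew (Eklof--Fisher) description of elementary equivalence of abelian groups: a torsion-free divisible direct summand contributes nothing to the $p$-Ulm invariants or to any $\dim_{\mathbb{F}_p}(A/pA)$, leaves every quotient $A/nA$ and all torsion unchanged, and the one thing it does change --- the maximal divisible subgroup --- is not first-order expressible (for instance $\mathbb{Q}\equiv\mathbb{Q}^2$). Alternatively one could avoid the citation with a direct Ehrenfeucht--Fra\"iss\'e argument, replacing $\mathbb{Q}$ by $C_q$ for a prime $q$ huge relative to the quantifier rank and checking that Duplicator can keep the $\widehat{\mathbb{Z}}^d$-coordinates matched while concealing the $C_q$-coordinate, since Spoiler cannot form the multiple $q\cdot(\,\cdot\,)$ in few moves. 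Note that in (a) the perturbation must be spread over infinitely many primes: if the extra cyclic factor sat at a fixed prime $p$, a single sentence could detect the resulting quotient $(\mathbb{Z}/p\mathbb{Z})^{d+1}$, so the contradiction has to be produced on the $d$-generated side, by a perturbation that escapes to infinity among the primes.
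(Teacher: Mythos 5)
Your proposal is correct, but it takes a genuinely different route from the paper. The paper invokes Szmielew's theorem in the form: for every sentence $\phi$ and almost every prime $q$, an abelian group $G$ satisfies $\phi$ iff $G\times C_q$ does; it then lands the contradiction on the non-generated side, showing that $(\widehat{\mathbb{Z}})^d\times C_q$ (and, for (b), $\widehat{\mathbb{Z}}\times (C_q)^{\aleph_0}$) still satisfies the putative sentence, using that $\phi$ is a Boolean combination of Szmielew invariant sentences mentioning only finitely many primes (Hodges, Thm.~A.2.7). You instead let the perturbing prime escape to infinity and use ultraproducts and {\L}o\'s together with the Szmielew--Eklof--Fisher invariants to obtain a full elementary equivalence $\widehat{\mathbb{Z}}^d\equiv\widehat{\mathbb{Z}}^d\times\mathbb{Q}$, so the contradiction falls on the $d$-generated side; your key lemma is true exactly as you argue, since a torsion-free divisible summand alters no Szmielew invariant of $\widehat{\mathbb{Z}}^d$, and the auxiliary steps (ultraproducts commute with finite direct products, any nontrivial torsion-free divisible group is elementarily equivalent to $\mathbb{Q}$, and $\equiv$ passes to finite direct products) are standard. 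For (b) your argument is in fact simpler than the paper's: you need only the completeness of the theory of infinite $\mathbb{F}_2$-vector spaces, with no invariant-sentence machinery at all. One caveat: your suggested Ehrenfeucht--Fra\"{\i}ss\'e alternative, replacing $\mathbb{Q}$ by $C_q$ with $q$ huge relative to the quantifier rank, fails as stated, because the rank-one sentence $\exists x\,(x\neq 0\wedge qx=0)$ separates $\widehat{\mathbb{Z}}^d$ from $\widehat{\mathbb{Z}}^d\times C_q$ for every $q$ (Spoiler wins in one move by playing a generator of $C_q$); the right finiteness parameter is the set of moduli occurring in the sentence, not its quantifier rank, which is exactly what the paper's appeal to Szmielew invariant sentences provides. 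Since that was offered only as a way to avoid a citation, it does not affect your main argument via $\mathbb{Q}$.
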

\begin{proof}
Let $C_q$ denote the cyclic group of order $q$. By a    result of Szmielev related to her proof that 
 the theory of abelian groups in decidable  (1950s), for each sentence $\phi$  in the language of groups the following holds:  for almost all primes $q$, an abelian group $G$ satisfies $\phi$ iff $G \times C_q$ satisfies $\phi$. 

Recall that $\hat \ZZ$ is the profinite completion of $\ZZ$, which is the Cartesian product of all the $\ZZ_p$ for $p$ a prime. It is 1-generated and has each finite  cyclic group as a quotient (in fact it is the free profinite group of dimension 1).
If $\phi$ expresses being $d$-generated, then $(\hat \ZZ)^d \models \phi$. Let $q$ be
 a prime as above, then also $(\hat \ZZ)^d \times C_q \models \phi$. Yet, this group is not $d$-generated because it has $C_q^{d+1}$ as a quotient..  
 
  A similar argument   shows that no sentence $\phi$ can express being finitely generated among profinite groups. Modify the argument above. Replace   $(\hat \ZZ)^d \times C_q  $ by the profinite group $\hat  \ZZ \times (C_q)^{\aleph_0}$ which is not finitely generated but would also satisfy $\phi$, for almost every prime $q$.  
This uses that $\phi$ can be expressed as a Boolean combination   of so-called Szmielev invariant sentences, which can only contain information about finitely many primes; see Hodges \cite[Thm A.2.7]{Hodges:93}.
\end{proof}

A profinite group $G$ has \emph{rank} $r$ if each closed subgroup of $G$ is $r$-generated. For an abelian profinite group $G$, this is equivalent to  the whole group being $r$-generated,  by the structure theorem that $G$ is a direct product of pro-cyclic groups and finite cyclic groups. This implies:
\begin{corollary} There is no first-order sentence   expressing that  a profinite  group $G$  has finite rank.
\end{corollary}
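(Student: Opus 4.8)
The plan is to reuse the proof of part (b) of the Proposition above almost verbatim; the only genuinely new ingredient is the equivalence recalled just before the statement, namely that for abelian profinite groups having finite rank is the same as being finitely generated (a consequence of the structure theorem). First I would use this equivalence to transport the two witnesses from the proof of (b) into the present setting: the procyclic group $\hat \ZZ$ has rank $1$, since every closed subgroup of it is procyclic, hence $1$-generated; on the other hand, for each prime $q$ the abelian profinite group $\hat \ZZ \times (C_q)^{\aleph_0}$ is not finitely generated, so by the equivalence it has infinite rank.

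Next, assume towards a contradiction that some first-order sentence $\phi$ in the language of groups expresses ``$G$ has finite rank'' within the profinite groups. Then $\hat \ZZ \models \phi$, and I would aim to conclude that $\hat \ZZ \times (C_q)^{\aleph_0} \models \phi$ for almost every prime $q$, which contradicts the infinite-rank observation above. For this step I would argue exactly as in the proof of (b): modulo the theory of abelian groups, $\phi$ is a Boolean combination of Szmielew invariant sentences, and such a combination carries information about only finitely many primes (Hodges \cite[Thm A.2.7]{Hodges:93}); hence for every prime $q$ outside this finite set, the truth value of $\phi$ in an abelian group is unchanged under taking the direct product with an arbitrary number of copies of $C_q$, in particular with $(C_q)^{\aleph_0}$. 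Choosing such a $q$ then yields the contradiction.

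The one point that needs care is precisely this last transfer. Szmielew's theorem, as quoted in the proof of the Proposition, only gives invariance under $G \mapsto G \times C_q$ for almost all $q$; to see that $G \mapsto G \times (C_q)^{\aleph_0}$ is equally harmless one has to descend to the level of the Szmielew invariants themselves and observe that, for a prime $q$ not occurring in the finite support of $\phi$, these invariants are completely insensitive to the $C_q$-primary part. Since this is the same reduction already performed in the proof of part (b), there is in effect nothing new to verify here: the corollary is just the remark that the two groups used there land on opposite sides of the finite-rank boundary, so the argument of (b) rules out a sentence for finite rank as well.
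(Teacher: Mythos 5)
Your proposal is correct and is essentially the paper's own argument: the corollary is obtained from part (b) via the equivalence, for abelian profinite groups, of finite rank and finite generation (the structure theorem), applied to the same witnesses $\hat{\ZZ}$ and $\hat{\ZZ}\times (C_q)^{\aleph_0}$, with the same Szmielew-invariant transfer for almost all primes $q$. The paper simply records this as an immediate consequence, and your write-up fills in exactly that reasoning, including the correct observation that the product with $(C_q)^{\aleph_0}$ (rather than a single $C_q$) is handled by the invariant-sentences form of Szmielew's theorem, just as in the proof of (b).
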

On the other hand, the entire theory of a profinite group $G$ contains the information whether it is (topologically)  finitely generated: Jarden and Lubotzky~\cite{Jarden.Lubotzky:08} (also  see \cite[Thm.\ 4.2.3]{Segal:09}) proved that if $G,H$ are profinite groups, $G$ is f.g.\ and $G \equiv H$ then $G \cong H$.      
 Whether a profinite group is  $d$-generated  {can} also be recognized  from the entire theory, because it means that each finite quotient is $d$-generated, and one can express in f.o.\ logic what the finite quotients are again by  Jarden and Lubotzky~\cite{Jarden.Lubotzky:08}.
 
 This leaves open the question whether a uniform  \emph{set} of axioms  can express that a  profinite group is f.g.
 \begin{question} Is there a set of sentences $S$  in the language of group theory such that for profinite groups $G$,  we have  $G \models S \LR  G$ is f.g.? \end{question}
 
 \subsection{Comparison with the discrete case}
 Let's compare this for a moment with the case of  discrete groups where f.g.\ now means literally finitely generated.
 
  The elementary  theory of a group $G$ doesn't  determine whether  $G$  is f.g.: by an easy elementary chain argument, for each countable group $G$ there is a countable group $H$ such that $G$ is an elementary submodel of $H$, in particular has the same theory,  and $H$ is not f.g.\  \cite[Section 4]{Nies:DescribeGroups}.  (This argument using elementary chains of type $\omega$  doesn't work for profinite groups. The reason is that model theoretic constructions such as adding a constant, or union of elementary chains, don't work in the topological setting such as for  profinite structures.)

We note that the  theory $T$  of f.g.\ groups  is  $\Pi^1_1$ complete by Morozov and Nies \cite{Morozov.Nies:05}. Its    models could be called ``pseudo-f.g.", in analogy with  the pseudofinite groups. An example of such a group that is not f.g.\ is  the free group of infinite dimension $F_\omega$.
   
\subsection{Finite axiomatisability  implies that the set of   primes that occur is finite}
In the following ``FA" is short  for ``finitely axiomatisable".

We begin with an observation  of T.\ Scanlon. Rings will be commutative with unit element.  $\ZZ_p$ denotes the ring (or sometimes group) of $p$-adic integers\footnote{An fun   exercise to get used to the  3-adic integers  is the following. Note that   $-1 = \ldots 22222222222$, and verify that ${-1}^2= 1$ in $\ZZ_3$ using   the multipication algorithm from elementary school.}.
\begin{proposition}[going back to Scanlon, 2016] Let $R $ be the profinite ring $ \prod_{p \in S} \ZZ_p$ where $\pi$ is an  infinite set of  primes.  Then $R$ is not FA within the class of profinite rings. \end{proposition}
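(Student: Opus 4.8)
The plan is to derive a contradiction from the assumption that some sentence $\phi$ in the language of rings satisfies: a profinite ring $R'$ models $\phi$ if and only if $R'\cong R$. The key point will be that $\phi$, having bounded quantifier rank, cannot tell the infinite product $R=\prod_{p\in S}\mathbb Z_p$ apart from a sufficiently large \emph{finite} subproduct $\prod_{p\in S_0}\mathbb Z_p$, $S_0\subseteq S$ finite; but the latter is again a profinite ring, and it is not isomorphic to $R$.

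First I would invoke the Feferman--Vaught theorem for direct products: given a ring sentence $\phi$, there are finitely many ring sentences $\sigma_1,\dots,\sigma_m$ and a bound $N$, all depending only on $\phi$, such that for every index set $I$ and every family $(A_i)_{i\in I}$ of rings, whether $\prod_{i\in I}A_i\models\phi$ is determined by the function $\epsilon\mapsto\min(N,|I_\epsilon|)$, where for $\epsilon\in\{0,1\}^m$ we put $I_\epsilon=\{\,i\in I:\text{for all }j,\ A_i\models\sigma_j\iff\epsilon_j=1\,\}$. This is the standard reduction of the theory of a product to the theory of its index Boolean algebra $\mathcal P(I)$ with the named subsets $\{i:A_i\models\sigma_j\}$, together with the fact that the first-order theory of a power-set algebra $\mathcal P(J)$ equipped with finitely many named subsets depends only on the capped cardinalities of the (at most $2^m$) pieces of the induced partition --- an Ehrenfeucht--Fra\"iss\'e computation. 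Equivalently: the theory of $\prod_{p\in S}\mathbb Z_p$ is the limit of the theories of the finite subproducts $\prod_{p\in S_0}\mathbb Z_p$ as $S_0$ increases through the finite subsets of $S$.

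Next I would apply this with $A_p=\mathbb Z_p$ and $I=S$. Since $S$ is infinite and there are only finitely many pieces $S_\epsilon$, at least one of them is infinite; choose a finite $S_0\subseteq S$ that meets every finite piece $S_\epsilon$ in all of it and every infinite piece in at least $N$ points. Then $S_0$ and $S$ induce the same capped partition data, so $\prod_{p\in S_0}\mathbb Z_p\models\phi$ if and only if $\prod_{p\in S}\mathbb Z_p\models\phi$; as $R\models\phi$ by assumption, $\prod_{p\in S_0}\mathbb Z_p\models\phi$. But a finite product of the rings $\mathbb Z_p$ is again a profinite ring, and it is not isomorphic to $R$: a ring isomorphism preserves the Boolean algebra of idempotents, and for $\prod_{p\in S_0}\mathbb Z_p$ this algebra is $\mathcal P(S_0)$, which is finite, whereas for $R$ it is $\mathcal P(S)$, which is infinite (each $\mathbb Z_p$ being local, hence having no nontrivial idempotents). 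This contradicts the choice of $\phi$, so no such $\phi$ exists and $R$ is not FA within the class of profinite rings.

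The one genuinely nontrivial ingredient is the Feferman--Vaught reduction together with the accompanying convergence statement; I expect the Ehrenfeucht--Fra\"iss\'e estimate --- that finite power-set algebras with named subsets, of growing size, agree with the infinite one on all sentences of bounded quantifier rank --- to be the main technical point, while everything else (profiniteness of finite products, the idempotent-algebra computation, non-isomorphism) is routine. I note that the same argument in fact shows that $R$ is not finitely axiomatisable even within the class of all commutative rings, since the witnesses $\prod_{p\in S_0}\mathbb Z_p$ are commutative rings.
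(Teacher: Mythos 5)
Your proof is correct, but it takes a genuinely different route from the paper's. The paper also starts from the Feferman--Vaught theorem, but then never has to analyse the Boolean-algebra side at all: by the pigeonhole principle applied to the finitely many auxiliary sentences $\psi_1,\dots,\psi_n$, there are two primes $r\neq q$ in the (infinite) index set with $\mathbb{Z}_r\models\psi_j \Leftrightarrow \mathbb{Z}_q\models\psi_j$ for all $j$, and one simply replaces the factor at index $r$ by $\mathbb{Z}_q$, keeping the index set fixed. The sets $X_j=\{p : R_p\models\psi_j\}$ are then literally unchanged, so $\mathcal{P}(I)\models\theta(X_1,\dots,X_n)$ holds exactly as before and the modified product $R'$ still satisfies $\phi$; non-isomorphism is witnessed by $r$ being a unit in $R'$ but not in $R$. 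Your route instead shrinks the index set to a finite $S_0$, which is precisely why you need the extra ingredient you flag: the Ehrenfeucht--Fra\"iss\'e (capped-cardinality) analysis of power-set algebras with finitely many named subsets, to conclude that $\mathcal{P}(S_0)$ and $\mathcal{P}(S)$ with matching capped partition data agree on $\theta$. That lemma is standard and your use of it is sound, as is the idempotent count ruling out an isomorphism of $R$ with a finite subproduct, so the argument goes through; what it buys is the sharper structural statement that the theory of $R$ is the limit of the theories of its finite subproducts, and a witness with only finitely many factors, at the cost of quantifier-rank bookkeeping that the paper's one-factor swap avoids entirely. One small caveat: your closing remark that the argument shows non-axiomatisability ``even'' within all commutative rings is formally a weaker statement, not a stronger one, since non-FA within the subclass of profinite rings (where your witnesses already live) trivially implies non-FA within any larger class.
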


 \begin{proof} This follows from the  Feferman-Vaught theorem (see any large text on model theory, such as \cite{Hodges:93}).    For every sentence $\phi$ in the language of rings there is a finite sequence of sentences $\psi_1, \ldots, \psi_n$ in the language of rings and a formula $\theta(x_1,\ldots,x_n)$ in the language of Boolean algebras so that for any index set $I$ and any family of rings $R_i$ indexed by $I$, letting    \bc $X_j := \{ i \in I ~:~ R_i \models \psi_j \}$,  \ec we have  \bc $\prod_{i \in I} R_i \models \phi$ if and only if ${\mathcal P}(I) \models \theta(X_1,\ldots,X_n)$.    \ec Assume  for a contradiction that a sentence  $\phi$  shows that $R$ is FA.    By the pigeon hole principle, we can find two distinct primes $r \neq q$  in $\pi$ so that for all $j \leq n$ we have ${\mathbb Z}_r \models \psi_j \Longleftrightarrow {\mathbb Z}_q \models \psi_j$.   Define $R_p := {\mathbb Z}_p$ if $p \neq r$ and $R_r := {\mathbb Z}_q$.   Then $R' := \prod R_p \models \phi$ but $R' \not \cong  R$ as, for example, $r$ is a unit in $R'$ but not in $R$.
 \end{proof}
 $\UT(R)$ denotes the usual Heisenberg group over the ring $R$. It is well known that $Z(\UT(R)) \cong (R, +)$ because the centre consists of the matrices with only a nonzero entry in the upper right corner  (other than the diagonal).  
\begin{proposition} Let $R $ be the profinite ring $ \prod_{p \in \pi} \ZZ_p$ where $\pi$ is a   set of  primes. 
 If $\UT(R)$ is FA  within the profinite groups then  $\pi$ is finite.  
 \end{proposition}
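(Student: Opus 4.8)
The plan is to reduce the statement about the Heisenberg group $\UT(R)$ to the preceding proposition about the ring $R = \prod_{p \in \pi} \ZZ_p$ itself. The key observation, already recorded in the text, is that the centre $Z(\UT(R))$ is isomorphic as a group to $(R,+)$, and moreover the group $\UT(R)$ interprets (in the model-theoretic sense, with parameters or even without) the ring structure on its centre: addition is the group operation on the centre, and multiplication can be recovered from commutators, since for upper unitriangular matrices the commutator map realises multiplication in $R$. Concretely, $[g,h]$ for generic $g,h$ lands in the centre and, as $g,h$ range over $\UT(R)$, one recovers all products of pairs of elements of $R$; this is the standard fact that the ring $R$ is interpretable in the group $\UT(R)$ (uniformly in $R$). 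So first I would spell out this interpretation $\Gamma$: a first-order formula $\delta(x)$ defining the centre, the induced $+$, and a formula defining $\cdot$ via commutators, together with a proof that $\Gamma(\UT(R)) \cong (R,+,\cdot)$.

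Next I would argue the transfer of finite axiomatisability. Suppose $\UT(R)$ is FA within the profinite groups, say by a sentence $\phi$. I want to conclude that $R$ is FA within the profinite rings, which contradicts the Scanlon proposition (valid since $\pi$ infinite). The subtlety is that ``FA within the profinite groups'' only pins down $\UT(R)$ among profinite groups, and I must produce a ring sentence pinning down $R$ among profinite rings. Given a profinite ring $S$, the group $\UT(S)$ is again profinite, and $S \mapsto \UT(S)$ is functorial; conversely, if a profinite group $G$ is elementarily equivalent to $\UT(R)$ (or merely satisfies $\phi$), then $\Gamma(G)$ is a ring elementarily equivalent to $R$, but I need $\Gamma(G) \cong \UT(S)^{-1}$-type control, i.e.\ I need to recover a profinite \emph{ring} and show it is $R$. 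The clean route: show that for a profinite ring $S$, $\UT(S) \equiv \UT(R)$ implies $S \equiv R$ (immediate from interpretability), and then invoke that $R = \prod_{p\in\pi}\ZZ_p$ is \emph{not} FA, meaning there is a profinite ring $S \not\cong R$ with $S \equiv R$; then $\UT(S)$ is a profinite group satisfying $\phi$, and $\UT(S) \not\cong \UT(R)$ because $\UT$ is faithful (from $\UT(S)\cong\UT(R)$ one recovers $S \cong Z(\UT(S)) \cong Z(\UT(R)) \cong R$ as rings via the interpretation, a contradiction). Hence $\phi$ does not axiomatise $\UT(R)$ among profinite groups — contradiction — so $\pi$ must be finite.

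The main obstacle I expect is bookkeeping around the interpretation in the \emph{topological} category: the Feferman–Vaught / Scanlon argument is about discrete first-order logic, and the text is careful to note that model-theoretic constructions behave differently for profinite structures. I need the interpretation of $R$ in $\UT(R)$ to be definable by a single first-order formula (no parameters, or parameters that are themselves first-order definable) so that $\UT(S) \equiv \UT(R) \Rightarrow S \equiv R$ holds at the level of pure first-order theories, and I need ``$S \not\cong R$ and $S \equiv R$'' to be genuinely available from the Scanlon proposition — which it is, since the proof there exhibits $R' \not\cong R$ with $R' \equiv R$ (indeed $R'$ is obtained by swapping one $\ZZ_r$ for a $\ZZ_q$ with the same $\psi_j$-profile, so $R' \equiv R$ by Feferman–Vaught). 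So the real content is just: (i) the ring $R$ is uniformly interpretable in the group $\UT(R)$ without parameters; (ii) $\UT$ is injective on isomorphism types of profinite rings, because the centre with its commutator-induced multiplication recovers the ring. Granting (i) and (ii), the proposition follows by contraposition from Scanlon's, with no new topology needed beyond the fact that $\UT(S)$ is profinite whenever $S$ is.
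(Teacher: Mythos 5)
There is a genuine gap at the pivot of your reduction: you claim that the non-finite-axiomatisability of $R=\prod_{p\in\pi}\ZZ_p$ gives you a profinite ring $S\not\cong R$ with $S\equiv R$, ``indeed $R'\equiv R$ by Feferman--Vaught''. Neither claim is right. Failure of FA only says that no \emph{single} sentence pins $R$ down: for each sentence $\phi$ true in $R$ there is some $S\not\cong R$ satisfying that particular $\phi$; it does not produce one ring elementarily equivalent to $R$. And the specific $R'$ from the Feferman--Vaught/pigeonhole construction agrees with $R$ only on the one sentence whose analysis produced the finitely many $\psi_j$; it is in fact \emph{not} elementarily equivalent to $R$, since the sentence ``$r\cdot 1$ is a unit'' (with $r$ the prime that was swapped out) holds in $R'$ but fails in $R$. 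So the step ``then $\UT(S)$ is a profinite group satisfying $\phi$'' has no justification as written, and your direction of interpretability ($\UT(S)\equiv\UT(R)\Rightarrow S\equiv R$) cannot supply it.

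The repair is to work sentence-by-sentence in the other direction, which is exactly what the paper does: since $\UT(S)$ is uniformly (and parameter-freely) interpretable in the ring $S$, the putative axiomatising group sentence $\theta$ translates to a ring sentence $\wt\theta$ with $\UT(S)\models\theta\Leftrightarrow S\models\wt\theta$; applying Feferman--Vaught to $\wt\theta$ yields $R'\not\cong R$ with $R'\models\wt\theta$, hence $\UT(R')\models\theta$, and one concludes by showing $\UT(R')\not\cong\UT(R)$. For that last point you invoke recovering the ring from the group via the centre and commutators; note that the commutator form on $\UT(R)/Z$ is the alternating form $(a,b),(c,d)\mapsto ad-bc$, and recovering multiplication from it (Mal'cev-style) needs parameters, so your claim of a parameter-free uniform interpretation of $R$ in $\UT(R)$ would itself need care. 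The paper sidesteps this entirely with an elementary isomorphism invariant: every element of the centre of $\UT(R')$ is divisible by $r$, while the centre of $\UT(R)\cong(R,+)$ has elements that are not. With these two changes your argument becomes essentially the paper's proof.
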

 \begin{proof}
 Note that $\UT(R)$ can be interpreted in $R$ by a collection of first-order formulas that do not depend on $R$. Hence,  for each  sentence $\theta$ in the language of groups, there is a sentence $\wt \theta$ in the language of  rings such that for each ring $R$, 
 \bc $\UT(R) \models \theta 
 \LR R \models \wt \theta$. \ec
 
Assume  for a contradiction that a sentence $\theta $ shows that $\UT(R)$ is FA. within the profinite groups. Applying the above Feferman-Vaught analysis to $\phi = \wt \theta$,   let $r\neq q$ be primes as above, and let also $R'$ defined as above. Then $R' \models \wt \theta$ and hence  $\UT(R')  \models \theta$. However, $\UT(R') \not \cong \UT(R)$ because every element in the centre of $\UT(R')$, but not in the centre of   $\UT(R)$, is divisible by $r$.

\end{proof}

A similar argument    works for some other   algebraic groups. For instance,  for each $n$,  $GL_n(R)$ can be interpreted in $R$ by formulas not depending on $R$, and  its centre consists of  the scalar matrices  $\alpha I_n$ where $\alpha$ is a   unit of $R$.  As mentioned,  the groups of units  are not isomorphic for the rings $R'$ and $R$ as above.

\section{Nies, Perin and Segal:  all f.g.\ profinite groups are  $\omega$-homogeneous}

Let $G$ be a profinite group (separable, as always) and $\ol g \in G^k$ where $k \in \NN$. We want to show that the type of $\ol g$ (i.e., the first-order theory of $(G, \ol g)$) determines the orbit of $\ol g$ in $G$. In the case of countable groups such a result  has been shown e.g.\ for the f.g.\ free groups \cite{Perin.Sklinos:12}.    The following was first noticed in discussions between Segal and   Chloe Perin at an OW meeting in  2015.  Her Master's student Noam Kolodner covered it in his thesis.
\begin{theorem} Let $G, H$ be (topologically) f.g.\ profinite  groups, $k \in \NN$, $\ol g \in G^k$ and $\ol h \in H^k$. 
 If $(G,\ol g) \equiv (H, \ol h) $  then  $(G,\ol g) \cong (H, \ol h) $. \end{theorem}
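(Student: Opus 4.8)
The plan is to prove the stronger statement that $(G,\ol g)\equiv (H,\ol h)$ already implies $(G,\ol g)\cong(H,\ol h)$ by reducing to the Jarden--Lubotzky theorem (stated in the excerpt: if $G$ is f.g.\ profinite, $G\equiv H$, and $H$ is profinite, then $G\cong H$). The key observation is that a profinite group $G$ with distinguished tuple $\ol g\in G^k$ can be coded as the profinite completion of a finitely generated structure, and that adding a finite tuple of constants does not spoil finite generation. First I would recall that for a f.g.\ profinite group $G$, the elementary theory determines the family of finite quotients: a finite group $F$ is a continuous quotient of $G$ iff the sentence ``there is a surjection onto $F$'' holds (this is the content of the part of Jarden--Lubotzky already invoked in the preceding section for plain groups, and it relativizes verbatim to $(G,\ol g)$, since a surjection $G\to F$ carrying $\ol g$ to a prescribed tuple $\ol f\in F^k$ is again first-order expressible). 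So from $(G,\ol g)\equiv(H,\ol h)$ we learn that $(G,\ol g)$ and $(H,\ol h)$ have the same finite ``pointed'' quotients $(F,\ol f)$, with the same multiplicities in the inverse system.

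Next I would pass to the inverse limit. Since $G$ is topologically generated by some finite set, and $H\equiv G$, $H$ is also f.g.\ (finite generation by $d$ elements is expressible by a first-order sentence once we know, again via Jarden--Lubotzky, that it amounts to every finite quotient being $d$-generated — and the bound on $d$ is transferred by $\equiv$). Now $(G,\ol g)=\varprojlim (F_i,\ol f_i)$ over the directed system of pointed finite quotients, and likewise for $(H,\ol h)$. The two directed systems of pointed finite quotients coincide up to the data recoverable from the theory. The standard back-and-forth / tree argument of Jarden--Lubotzky (as in \cite[Thm.\ 4.2.3]{Segal:09}) builds, level by level, a compatible system of isomorphisms $(F_i,\ol f_i)\cong(F'_{\sigma(i)},\ol f'_{\sigma(i)})$ using a König's-lemma compactness step on the tree of partial matchings of the two inverse systems; the finite-generation hypothesis is exactly what makes each level have only boundedly many homomorphisms to track, so the tree is finitely branching. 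Taking the inverse limit of this coherent system of isomorphisms yields a topological isomorphism $(G,\ol g)\cong(H,\ol h)$.

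The one step that needs care — and is the main obstacle — is verifying that the interpretation of the pointed structure $(G,\ol g)$ is genuinely first-order and that everything Jarden--Lubotzky extract from $\mathrm{Th}(G)$ can be extracted from $\mathrm{Th}(G,\ol g)$ with the extra constants present. Concretely: (i) ``$(F,\ol f)$ is a quotient of $(G,\ol g)$'' must be a single sentence in the language with $k$ added constants, which it is, since $F$ is finite and a homomorphism onto $F$ amounts to choosing finitely many elements satisfying the finitely many relations of $F$ plus sending $\ol g\mapsto\ol f$; (ii) the multiplicity with which a pointed quotient occurs, i.e.\ whether $G\to F_1$ factors through $F_2$ compatibly with $\ol g$, is again a single sentence; (iii) the bound $d$ on the number of topological generators is uniform and first-order-transferable as above. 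Granting (i)--(iii), the back-and-forth construction of Jarden--Lubotzky goes through mutatis mutandis, the only change being that every finite quotient and every homomorphism in the argument is now decorated with the image of $\ol g$. I would therefore structure the writeup as: (1) the pointed-quotient lemma (first-orderness), (2) transfer of the generator bound, (3) ``same theory $\Rightarrow$ isomorphic inverse systems of pointed quotients'', (4) inverse limit of the coherent isomorphisms. Step (3), the tree/compactness argument, is where the real content of Jarden--Lubotzky is reused, and I would cite it rather than reprove it, indicating only the modifications needed to carry the constants.
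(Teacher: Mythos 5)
The central gap is your step (i), and the justification you give for it is incorrect. Choosing finitely many elements of $G$ that satisfy the defining relations of the finite group $F$ expresses the existence of a homomorphism \emph{from} $F$ (or a quotient of $F$) \emph{into} $G$; it does not express that $G$ admits a continuous surjection \emph{onto} $F$. The latter asserts the existence of an open normal subgroup $N$ with $(G,\ol g)/N \cong (F,\ol f)$, and one cannot quantify over open subgroups in the first-order language of groups; if ``$(F,\ol f)$ is a pointed quotient of $(G,\ol g)$'' were a single sentence for such trivial reasons, the Jarden--Lubotzky theorem itself would be nearly immediate, which it is not. The paper closes exactly this gap with the Nikolov--Segal theorem: for a $d$-locally finite word $w$ there is $f=f(w,d)$ with $w(R)=R^{*f}_w$ for every $d$-generated finite group $R$, so the subgroup $G^{*f}_w$ is definable by an existential formula independent of $G$; consequently $(G,\ol g)\equiv (H,\ol h)$ (in fact only $\equiv_{\Sigma_2}$ is used) gives $(G,\ol g)/G^{*f}_w\cong (H,\ol h)/H^{*f}_w$, and since every open $N\triangleleft H$ contains $H^{*f}_w$ for a suitable $d$-locally finite $w$, one obtains $\Img(H,\ol h)\sub\Img(G,\ol g)$. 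Your appeal to ``relativizes verbatim'' hides precisely this step: what must be checked is that the verbal-subgroup argument carries the constants $\ol g$, $\ol h$ through, which is what the paper does; your substitute argument via relations of $F$ would not establish the claim even in the unpointed case.

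Two further points. Your parenthetical claim that being $d$-generated is expressible by a single first-order sentence is false --- the same paper proves it cannot be so expressed within the profinite groups; fortunately you never need it, since the hypothesis already makes both $G$ and $H$ f.g. Second, once agreement of the pointed finite quotients is in hand, your step (3)--(4) is broadly the right shape, but note the paper does not build level isomorphisms directly: it proves a lemma that $\Img(H,\ol h)\sub\Img(G,\ol g)$ yields an epimorphism $(G,\ol g)\rightsquigarrow (H,\ol h)$ (K\"onig's Lemma on the finitely branching tree of epimorphisms onto the quotients by the characteristic subgroups of index-$\le n$ intersections), and then uses symmetry plus the Hopfian property of f.g.\ profinite groups to convert mutual epimorphisms into an isomorphism; merely knowing the two pointed quotient families coincide as sets does not by itself give a coherent system of isomorphisms of the inverse systems.
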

 Here in   fact it suffices to look at the $\SI 2$ formulas satisfied by $\ol g$ in $G$ to determine the orbit.  The case $k=0$ is due to Jarden and Lubotzky~\cite{Jarden.Lubotzky:08}.

We need a lemma that  is a straightforward extension of \cite[Prop. \ 16.10.7]{Fried.Jarden:06}. Let $\Img(G, \ol g) $ denote  the set  of pairs $(R, \ol r)$, $R$ a finite group, $\ol r \in R^k$ such that there is an epimorphism $(G, \ol g) \rightsquigarrow (R, \ol r)$, i.e.\ it sends $g_i$ to $r_i$ for $i<k$. (The symbol $\rightsquigarrow$ will denote such epimorphisms.)
 For a profinite group $P$,  $\ol p \in P^k$  and $N \triangleleft P$ we will write $( P , \ol p ) /  N$ for the structure $(P/N, \seq{Np_i}_{i < k})$. 
 
 \begin{lemma} Suppose that $\Img(H ,\ol h) \sub \Img(G,  \ol g)$. Then $(G,\ol g) \rightsquigarrow (H, \ol h) $. \end{lemma}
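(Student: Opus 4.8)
The plan is to build the epimorphism $(G,\ol g)\rightsquigarrow(H,\ol h)$ as an inverse limit of compatible epimorphisms onto the finite quotients of $H$, using the hypothesis $\Img(H,\ol h)\subseteq\Img(G,\ol g)$ together with a compactness/K\"onig's lemma argument. Fix a cofinal descending sequence of open normal subgroups $N_0 \triangleright N_1 \triangleright \cdots$ of $H$ with $\bigcap_m N_m = 1$, so that $H = \varprojlim_m H/N_m$ and $\ol h$ corresponds to the coherent sequence $(\ol h N_m)_m$. Since $G$ is topologically finitely generated, for each finite quotient $(R,\ol r)$ there are only finitely many epimorphisms $(G,\ol g)\rightsquigarrow(R,\ol r)$ (a finitely generated profinite group has only finitely many open subgroups of each index, and an epimorphism onto $R$ is determined by where the finitely many generators go, of which there are finitely many choices); this finiteness is what makes the inverse limit argument work.

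The key steps, in order: (1) For each $m$, the pair $(H/N_m,\ol h N_m)$ lies in $\Img(H,\ol h)\subseteq\Img(G,\ol g)$, so the set $E_m$ of epimorphisms $(G,\ol g)\rightsquigarrow (H/N_m, \ol hN_m)$ is nonempty and finite. (2) There are natural restriction maps $E_{m+1}\to E_m$ obtained by composing an epimorphism onto $H/N_{m+1}$ with the quotient map $H/N_{m+1}\twoheadrightarrow H/N_m$; one checks this is well-defined and that the image is again a genuine epimorphism preserving the marked tuple. (3) We thus have an inverse system of nonempty finite sets $(E_m)$, whose inverse limit is nonempty by the standard compactness argument (equivalently, K\"onig's lemma applied to the tree of partial coherent choices). (4) A point of $\varprojlim_m E_m$ is a coherent family $(\varphi_m)_m$ of epimorphisms $\varphi_m\colon(G,\ol g)\rightsquigarrow(H/N_m,\ol hN_m)$; since $H=\varprojlim_m H/N_m$, these assemble into a continuous homomorphism $\varphi\colon G\to H$ with $\varphi(g_i)=h_i$. (5) Finally, $\varphi$ is surjective: its image is a closed subgroup of $H$ (continuous image of a compact group) that surjects onto every $H/N_m$, hence is dense, hence all of $H$; so $\varphi$ is the desired epimorphism $(G,\ol g)\rightsquigarrow(H,\ol h)$.

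The main obstacle I expect is step (2) combined with the finiteness needed in step (3): one must be careful that the restriction maps $E_{m+1}\to E_m$ are literally well-defined maps between these specific finite sets, and that finiteness of each $E_m$ genuinely follows from $G$ being topologically finitely generated — this is exactly where the f.g.\ hypothesis on $G$ is used and cannot be dropped, since for a non-f.g.\ profinite group the sets $E_m$ could be infinite and the naive inverse-limit argument would fail. Everything else (assembling the limit map, checking surjectivity via density) is routine profinite bookkeeping. This argument is the natural extension of \cite[Prop.~16.10.7]{Fried.Jarden:06}, where the case $k=0$ (no marked tuple) is treated; here we simply carry the tuple $\ol g\mapsto\ol h$ along through every quotient.
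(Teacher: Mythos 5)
Your proof is correct and follows essentially the same route as the paper: both build a nonempty inverse system of finite sets of epimorphisms onto the finite quotients of $(H,\ol h)$ and extract a limit epimorphism by K\"onig's lemma/compactness, with surjectivity coming from density of the image. The only cosmetic difference is that the paper works with the characteristic filtrations $H_n$, $G_n$ (intersections of open subgroups of index $\le n$) and the finite sets of epimorphisms $(G,\ol g)/G_n \rightsquigarrow (H,\ol h)/H_n$, whereas you obtain finiteness of each $E_m$ directly from the fact that a continuous homomorphism out of a topologically finitely generated profinite group into a finite group is determined by the images of the generators.
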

 \begin{proof} Note that  a f.g.\ profinite group $P$ has only finitely many open  subgroups of each index. Let $P_n$ be the intersection of subgroups of index $\le n$. Then $P_n$ is a characteristic subgroup of finite index.  
 
Fix  $n$. By hypothesis  there is an open $N \triangleleft G$ such that  $(H, \ol h)/H_n \cong (G, \ol g)/N$. Then $G_n \le N$, and hence $(G, \ol g)/G_n \rightsquigarrow (H, \ol h)/H_n$.   Let $\Phi_n$ be the set of witnessing epimorphisms. 

If $\phi \in \Phi_{n+1}$ then $\phi(G_n/G_{n+1}) \le H_n / H_{n+1}$, so $\phi$ induces a map $\wt \phi \in \Phi_n$. By Koenig's Lemma applied to the tree where the $n$-th level is $\Phi_n$, there is an epimorphism  $\psi \colon (G, \ol g) \rightsquigarrow (H, \ol h)$ in the   inverse limit of the $\Phi_n$. 
 \end{proof} 
 
 \begin{proof}[Proof of the Theorem]
 We  extend the proof in  Segal~\cite[Thm.\ 4.2.3]{Segal:09} of the Jarden-Lubotzky theorem.  We show that $\Img(H ,\ol h) \sub \Img(G, \ol g)$. By symmetry, also $\Img (G, \ol h) \sub \Img(H ,\ol g)$. The lemma together with the Hopfian property of f.g.\ profinite groups now  implies that $(G,\ol g) \cong (H, \ol h) $.

 Suppose that $G$ is topologically generated by $d$ elements. Recall from  \cite{Nikolov.Segal:07} (also \cite[Ch.\ 4]{Segal:09}) that a group word $w$ is called \emph{$d$-locally finite} if the $d$-generated   free group in the variety of groups satisfying $w$ is finite. (As an  example, the word  $w= [x,y]z^m$ is $d$-locally finite for each $d$ because a f.g.\ abelian group of exponent $m$ is finite.) A main technical  result in  \cite{Nikolov.Segal:07} states  that for such a word, there is $f= f(w,d)$ such that $w(R) = R^{*f}_w$ for each $d$-generated finite group $R$, where $R^{*f}_w$ is the set of products of at most $f$ many $w$ values or their inverses. Note that $R^{*f}_w$ is definable by a $\exists$ formula depending only on $f$ and $w$.
 
 As in \cite[Thm.\ 4.2.3]{Segal:09} our hypothesis that $(G,\ol g) \equiv (H, \ol h) $ implies that $(G,\ol g)/G^{*f}_w \cong  (H, \ol h)/H^{*f}_w $. Checking the formulas reveals that only  $(G,\ol g) \equiv_{\Sigma_2} (H, \ol h) $ is needed.  
 
  Suppose $N \triangleleft H$ is open. We need to show that $(H, \ol h)/N$ is in $\Img(G, \ol g)$.
   Again as  in \cite[Thm.\ 4.2.3]{Segal:09} pick a $d$-locally finite word $w$ such that $H^{*f}_w \le N$. Then 
 
 \bc $(G, \ol g ) \rightsquigarrow (G,\ol g)/G^{*f}_w \cong  (H, \ol h)/H^{*f}_w   \rightsquigarrow  (H, \ol h)/N$,  \ec
 as required. 
 \end{proof}

     \section{Nies and Stephan: Some properties of \\ infinitely generated nilpotent-of-class-2 groups}

\subsection{Three properties of an elementary abelian group} Throughout fix a prime $p$. Let $G= F_p^{(\omega)}$ denote the elementary abelian $p$ group (i.e., vector space over  the field $F_p$ of infinite dimension). This group has the following apparently unrelated properties.

\begin{definition}  \mbox{} 
\begin{enumerate}  \item $G$ is word-automatic in the usual sense of Khoussainov and Nerode: finite automata can recognize the domain and the group operations.

\item $G$ is $\omega$-categorical: it is  up to isomorphism the only countable model of its theory.
\item $G$ is pseudofinite: if a sentence $\phi$ holds in $G$ then $\phi$ also holds in a finite group.  \end{enumerate}
\end{definition}
Note that a word automatic group is called finite automata presentable in \cite{Nies:DescribeGroups}  in order to avoid confounding the notion   with the better known notion of automatic groups due to Thurston.   The group $F_p^{(\omega)}$ is $\omega$-categorical because for each $n$ there are only finitely many $n$-types.  To show that $F_p^{(\omega)}$  is pseudofinite, note that its theory is axiomatized by stating that the group is infinite,  has exponent $p$ and is abelian. So we may assume that a sentence $\phi$ as above  is a finite conjunction of these axioms, and so  clearly has a finite model. 

\subsection{Nilpotent of class 2 groups}
Usually we will assume  that  $p\neq 2$.
We want to study the three properties above for groups that are very close to abelian. Recall that a group  $G$ is \emph{nilpotent  of class 2} if each commutator $[x,y]$ is in the centre $C(G)$. In other words, $G$ is an extension of a group $N$ by a group $Q$ that is  abelian,  and $N $ is   contained in $C(G)$ (i.e., $G$ is a central extension). It is not hard to show that such a group is entirely given by the abelian groups $N$, $Q$, and an alternating  bilinear map $L \colon \,  Q \times Q  \to N$. It determines a unique central extension $G$  such that $L(Nx,Ny) = [x,y]$ (note that $[x,y] $ only depends on the cosets of $x,y$).  In fact this is a special case of the well known fact that an extension of abelian group $N$ by $Q$  is given by an element of the second co-homology group $H^2(Q,N)$. 

We will study our properties for  three  examples of infinitely generated groups of exponent $p$.  We will always have $Q =F_p^{(\omega)}$,   and fix  a basis $\seq {x_i}\sN i$  of $Q$.

  Let $L_p$ be the free nilpotent-2 exponent $p$ group  of infinite rank. One  has $N\cong Q$;  let $y_{i,k}$ ($i< k$) be free generators of $N$. It is given by the function $\phi (x_i, x_k) = y_{i,k}$ if $i<k$.  Thus 
  
  \bc $L_p = \la x_i, y_{i,k} \mid x_i^p, [x_i, x_k] y_{i,k}^{-1} , [y_{i,k}, x_r] (i< k) \ra$ \ec
where the $y_{i,k}$ are actually redundant.

$G_p$ is the group where $Q = F_p^{(\omega)}$, $N = C_p$ (cyclic group of order $p$)  and   $\phi (x_i, x_k) = 1$ if $i<k$, $-1$ if $i > k$ (and of course $0$ if $i=k$). 

This group is   extra-special  in the sense of Higman and Hall. (This means that the centre is cyclic of order $p$, equals the derived subgroup, and the central quotient is an elementary abelian $p$-group.)

Felgner~\cite[Section 4]{Felgner:75} has proved that $G_p$ is $\omega$-categorical for $p\neq 2$. In his notation $G_p$ is $G(p, \le)$ where $\le$ is the ordering of $\omega$. This is the only countably infinite extra-special group of exponent $p$.

Felgner (p.\ 423) also provides a recursive  axiom system for the theory of $G_p$ ($p$ odd as before). 
One expresses that the group is of exponent $p$, that the centre is cyclic of order $p$, contains the derived subgroup, and that the derived subgroup is non-trivial (and hence equals the centre). Further one expresses that the central quotient is infinite,  using an infinite list of axioms. Note that for each odd $k$ there isa (unique) extraspecial $p$-group of exponent $p$ and   order $p^k$.  (For $k=3$ this is the upper triangular matrices over $F_p$, for larger $k$ one takes central products of these.) So, finitely many axioms can be satisfied in a finite model. Hence $G_p$ is pseudo-finite as already noted by Felgner.

The group $H_p$ has $N \cong Q$, with  free generators $z_k$ ($k >0$), and the function $\phi (x_i, x_k) = z_k$ if $i<k$:

\bc $H_p = \la x_i, z_{k} \mid x_i^p, [x_i, x_k] z_{k}^{-1} , [z_{k}, x_r] \,  (i< k, 0 \le r) \ra $. \ec

In a nilpotent group, each nontrivial normal subgroup intersects the centre non-trivially. This implies that every proper quotient of $G_p$ is abelian; in particular, $G_p$ is not residually finite. On the other hand $L_p$ and $H_p$ are residually $p$-groups. To see this, take an element $h \neq 1$, where $h$ depends on generators $x_0, \ldots, x_{n-1}$. Then $h \neq 1$ in the finite quotient $p$-group where all the generators $x_k$, $k \ge n$, become trivial. This means that both groups are embedded into their pro-$p$ completions.

\begin{fact} None of the groups $L_p, G_p, H_p$ is abelian by finite. \end{fact}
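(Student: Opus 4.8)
The plan is to recognise that, for these groups, ``abelian by finite'' is a property of the associated alternating form alone, and then to check that property form by form. Recall the classification used above: each of $L_p,G_p,H_p$ is the central extension determined by $F_p$-vector spaces $N,Q$ and an alternating bilinear map $L\colon Q\times Q\to N$, with $[x,y]=L(\bar x,\bar y)$, where $\bar x$ denotes the image of $x$ in $Q=G/N$. So first I would argue: if $G$ has an abelian subgroup $A$ of finite index and $W:=\{\bar a: a\in A\}\le Q$, then $W$ has finite codimension in $Q$ (its codimension is $\log_p[G:AN]\le\log_p[G:A]$), and $W$ is \emph{totally isotropic} for $L$, since for $a,b\in A$ we have $L(\bar a,\bar b)=[a,b]=1$ and $L(\bar a,\bar b)$ depends only on the cosets. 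Hence it suffices to show that none of the three forms admits a totally isotropic subspace of finite codimension in $Q$.

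Next I would isolate a general lemma: if $Q$ is infinite-dimensional over $F_p$ and $L$ is a \emph{nondegenerate} alternating form on $Q$ (i.e.\ $\beta\colon v\mapsto L(v,\cdot)$ is injective into the algebraic dual $Q^{*}$), then $Q$ has no totally isotropic subspace of finite codimension. Indeed, if $W$ is totally isotropic of codimension $d<\infty$, then $W\subseteq W^{\perp}:=\{v:L(v,W)=0\}$, and $\beta$ maps $W^{\perp}$ into the annihilator of $W$ in $Q^{*}$, which has dimension $d$; since $\beta$ is injective this gives $\dim W\le\dim W^{\perp}\le d$, contradicting that $W$ has finite codimension in an infinite-dimensional space.

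It then remains to verify that the three forms are nondegenerate, i.e.\ $\operatorname{rad}(L):=\{v\in Q:L(v,Q)=0\}=0$; each is a one-line computation exploiting that the chosen generators of $N$ are $F_p$-independent. Writing $v=\sum_i v_ix_i$: in $L_p$, $L(v,x_j)$ is a combination of the pairwise-distinct basis vectors $y_{i,k}$ ($i\ne j$) with coefficients $\pm v_i$, so $L(v,x_j)=0$ for all $j$ forces every $v_i=0$; in $H_p$, $L(v,x_0)=-\sum_{i>0}v_iz_i$ forces $v_i=0$ for $i>0$, and then $L(v,x_1)=v_0z_1$ forces $v_0=0$; in $G_p$ (where $N=C_p=\langle z\rangle$ and $L(v,x_j)=\bigl(\sum_{i<j}v_i-\sum_{i>j}v_i\bigr)z$), letting $j$ exceed the support of $v$ gives $\sum_iv_i=0$, and induction on $j$ then gives all $v_i=0$. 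Combined with the two previous steps, this proves the Fact. (We use throughout that $p$ is odd, which is exactly what makes these groups non-abelian; for $p=2$ the groups are abelian and the statement is false.)

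There is no real obstacle here: the content is the lemma of the second paragraph together with the observation that abelian-by-finiteness is a statement about $L$. The only points to be careful about are that the reduction uses the image $W$ of $A$ rather than $A$ itself (harmless, since $L$ factors through $G/N$), and that one is working in possibly infinite dimension, so the dimension count must be run through the elementary fact that the annihilator of a finite-codimension subspace is finite-dimensional.
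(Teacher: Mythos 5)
Your reduction is fine: an abelian subgroup $A$ of finite index gives a subspace $W=\overline{A}\le Q$ of finite codimension that is totally isotropic for the commutator form $L$. The scalar case is also fine, and it settles $G_p$, since there $N=C_p\cong F_p$ and your dimension count in $Q^*$ is valid. The gap is in applying your lemma to $L_p$ and $H_p$: for those groups $L$ takes values in an \emph{infinite-dimensional} space $N$, so $\beta(v)=L(v,\cdot)$ lives in $\Hom(Q,N)$, not in $Q^*$, and the annihilator of a codimension-$d$ subspace $W$ there is $\Hom(Q/W,N)\cong N^d$, which is infinite-dimensional; the count $\dim W^\perp\le d$ evaporates. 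Worse, the statement itself is false for vector-valued forms, so nondegeneracy (trivial radical) cannot be the whole input: take $Q$ with basis $\{u\}\cup\{w_i\}_{i\in\NN}$, $N$ with basis $\{n_i\}$, and set $L(w_i,w_j)=0$, $L(w_i,u)=n_i$. This alternating form has zero radical, yet $\mathrm{span}(w_i)$ is totally isotropic of codimension $1$ (and the corresponding central extension is a nonabelian group with an abelian subgroup of index $p$). So for $L_p$ and $H_p$ your argument, as written, does not rule out a finite-codimension totally isotropic subspace; checking that the generators of $N$ are independent is exactly the vector-valued feature the lemma cannot exploit.

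The repair needs a direct argument about isotropic subspaces of these particular forms. For $L_p$ it is easy: $L(v,w)=\sum_{i<k}(v_iw_k-v_kw_i)y_{i,k}$ vanishes only when $v,w$ are proportional, so isotropic subspaces are at most one-dimensional. For $H_p$ (and in fact uniformly for all three groups) the efficient route is the paper's pigeonhole argument, phrased in your language as: if $W\le Q$ has finite codimension, infinitely many of the $\overline{x_i}$ lie in one coset of $W$, so choose $k<r<s$ with $\overline{x_r}-\overline{x_k},\ \overline{x_s}-\overline{x_k}\in W$; then $L(\overline{x_r}-\overline{x_k},\overline{x_s}-\overline{x_k})$ equals $y_{r,s}-y_{k,s}-y_{k,r}$ in $L_p$, $-z_r$ in $H_p$, and $-1$ times the central generator in $G_p$, all nonzero, so $W$ is not totally isotropic. (The paper runs this computation directly with group elements $x_rx_k^{-1},x_sx_k^{-1}$ inside a finite-index subgroup, avoiding the form-theoretic detour altogether.) Finally, note that your closing remark is off: your argument never actually uses that $p$ is odd, and neither does the Fact.
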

\begin{proof} Let $K$ be such a group and  suppose $M$ is a normal subgroup of finite index. There are $k< r <s$ such that $x_r x_k^{-1}\in M$ and $x_s x_k^{-1}\in M$. Then $[x_r x_k^{-1}, x_s x_k^{-1}] = [x_r, x_s] x_k^{-1}$, and this commutator is not $1$ in $K$. So $M$ is not abelian.  \end{proof}

\subsection{FA-presentability}
Any FA presentable structure has a decidable theory. Ershov~\cite{Ershov:72} showed  that a finitely generated nilpotent group has decidable first-order theory if and only if it is virtually abelian. 
 One the other hand, Nies and Semukhin~\cite[Thm.\ 4.2]{Nies.Semukhin:09} showed that  an abelian group that is an extension of finite index of an FA-presentable group is in itself FA-presentable. So a f.g.\ nilpotent group is FA-presentable iff it is abelian by finite.  Using Mal'cev coding, Nies and Thomas~\cite{Nies.Thomas:08} extended this by showing  that each finitely generated subgroup of an FA-presentable group is  abelian-by-finite. So  the natural setting  for finding interesting FA-presentable groups is among the  groups that are not finitely generated.
 
\begin{fact} $G_p$ is FA-presentable. \end{fact}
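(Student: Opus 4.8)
The plan is to build an explicit automatic copy of $G_p$. First I would discard the presentation and work instead with the concrete model of the central extension $G_p$ supplied by the structure theory of nilpotent‑of‑class‑$2$ groups recalled above: its elements are pairs $(v,c)$ with $v\in F_p^{(\omega)}$ and $c\in F_p$, and
\[ (v,c)\cdot(w,d)=\bigl(v+w,\ c+d+\beta(v,w)\bigr),\qquad \beta(v,w)=\sum_{i<k}v_iw_k \bmod p, \]
the ``upper–triangular'' $2$‑cocycle attached to the alternating form $\phi$. One checks that the antisymmetrisation $\beta(v,w)-\beta(w,v)$ of $\beta$ is exactly $\phi$, so this really is $G_p$; and since $p$ is odd, $\binom p2\equiv 0\bmod p$ forces $(v,c)^{p}=(0,0)$, so the model has exponent $p$, as required. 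These are the routine verifications that were already alluded to in the discussion above.

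Next I would fix the coding. Represent $(v,c)$ by the string $v_0v_1\cdots v_{m-1}\,\#\,c$ over the alphabet $\{0,1,\dots,p-1\}\cup\{\#\}$, where $m$ is least with $v_i=0$ for all $i\ge m$ (so there are no trailing zeros and the code is unique), the case $v=0$ giving simply $\#\,c$. The set of legal codes is a regular language, and equality of elements is literal string equality, hence automatic. So it remains to check that the ternary graph $\{(g,h,k):g\cdot h=k\}$ of multiplication is recognised by a synchronous three‑tape automaton running on the (blank‑padded) convolution of the three codes; automaticity of the identity and of the inverse then follows, the latter because $(v,c)^{-1}=(-v,\,-c+\beta(v,v))$ is first‑order definable from multiplication (or directly, by the same device as below).

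The heart of the matter is the cocycle term, and the point --- which is also the only place anything needs to be seen --- is that although $\beta$ is bilinear, hence looks ``quadratic'', it is computable by a finite automaton scanning $v$ and $w$ once from the left. Coordinatewise addition $u=v+w$ is plainly finite‑state: at position $i$ the automaton checks $u_i\equiv v_i+w_i\bmod p$, reading a blank on an exhausted tape as the digit $0$. For $\beta$, observe that since $F_p$ is finite the running sum $S_k:=\sum_{i<k}v_i\bmod p$ takes only finitely many values, so the automaton can carry a register $S\in F_p$ and an accumulator $A\in F_p$: on reading $(v_i,w_i)$ it performs $A\leftarrow A+w_iS$ and then $S\leftarrow S+v_i$ (all mod $p$), so that after the vector parts $A=\beta(v,w)$. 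Finally it verifies the central coordinate $c''\equiv c+c'+A\bmod p$ and accepts iff every check passed. This is a genuine finite automaton (state space $F_p\times F_p$ plus a few control states), so the multiplication graph is automatic and $G_p$ is FA‑presentable. I do not expect a real obstacle beyond spotting the running‑sum trick; once that is in place the rest is bookkeeping about convolutions of strings of unequal length, handled by the usual padding convention.
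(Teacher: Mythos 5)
Your proposal is correct and follows essentially the same route as the paper's proof: code the $Q$-part as a digit string with the central coordinate attached, use the explicit multiplication formula with the bilinear (cocycle) correction term, and recognise it with a synchronous automaton that carries a prefix sum mod $p$ and an accumulator mod $p$ in its state --- exactly the paper's device of remembering $\sum_{i<k}\beta_i$ and the current exponent of the central generator. The only differences are cosmetic (your cocycle is the transpose of the paper's, and you spell out the coding, padding, and the identification of the extension with $G_p$ in more detail than the paper does).
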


\begin{proof}  Each element has the form $c \cdot q$ where $c \in N$, $q\in Q$. 
 Note that   $q$ is given as a string $\alpha$ over the alphabet  of digits $0, \ldots, p-1$, and $c$ can be stored in the state. In the following $\alpha, \beta$ denote such strings, which are thought of as extended by $0$'s if necessary. Let $[\alpha] = \prod_i x_i^{\alpha_i}$.
 
  It is easy to verify that, with  arithmetic modulo $p$ and component-wise addition of strings,
 
 \[ [\alpha] [\beta] =  [\alpha + \beta] u^{\sum_{k>0}\alpha_k (\sum_{i< k} \beta_i}).\]
 This is because   one can  calculate $\prod_k x_k^{\alpha_k} \prod_i x_i^{\beta_i}$ by, for  decreasing positive $k$,    moving    $x_k^{\alpha_k}$ to the right past the $x_i^{\beta_i}$ for $i=0, \ldots, k-1$. Each such move creates a factor $u^{\alpha_k \beta_i}$.  
 
 A finite automaton processing $\alpha, \beta $ on two tracks can remember the current $\sum_{i< k} \beta_i$ in the state, and also carry out the calculation $\alpha_k (\sum_{i< k} \beta_i)$ and store the current exponent of $u$ in the state.
  \end{proof}

\begin{fact} $H_p$ is FA-presentable. \end{fact}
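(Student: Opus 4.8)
The plan is to mimic the proof that $G_p$ is FA-presentable, adapting the normal form and the automaton computation to the slightly different commutator structure of $H_p$. Recall that in $H_p$ the center $N$ is not cyclic: it has free generators $z_k$ for $k>0$, and the defining relation is $[x_i,x_k]=z_k$ for $i<k$ (and $[x_i,x_k]=z_k^{-1}$ for $i>k$, with $[x_i,x_k]=1$ for $i=k$). So a general element has the form $c\cdot q$ where $q=[\alpha]:=\prod_i x_i^{\alpha_i}\in Q$ and $c=\prod_{k>0} z_k^{\gamma_k}\in N$. The key structural point is that $N\cong Q\cong F_p^{(\omega)}$, so $c$ is itself coded by a finite string $\gamma$ over $\{0,\dots,p-1\}$. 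Thus an element of $H_p$ is coded by a pair of strings $(\gamma,\alpha)$, which we can present to an automaton on two tracks (or interleaved); the domain is then an FA-recognizable set.

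First I would write down the multiplication rule in this normal form. Pushing $[\alpha]$ past $[\beta]$ exactly as in the $G_p$ computation, each time we move $x_k^{\alpha_k}$ past $x_i^{\beta_i}$ with $i<k$ we pick up a factor $[x_k,x_i]^{\alpha_k\beta_i}=z_k^{-\alpha_k\beta_i}$ — note the commutator now depends only on the \emph{larger} index $k$, not on both indices. Collecting these contributions, the $z_k$-exponent picked up is $-\alpha_k\big(\sum_{i<k}\beta_i\big)$, so the product $([\gamma]_N\,[\alpha])\cdot([\delta]_N\,[\beta])$ has $Q$-part $[\alpha+\beta]$ and $N$-part with $z_k$-exponent $\gamma_k+\delta_k-\alpha_k\sum_{i<k}\beta_i$, all arithmetic mod $p$. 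Since $z_k$ is central, ordering issues between the $N$-part and the $Q$-part do not arise. This gives a clean closed formula, component-wise in $k$, for the product.

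Next I would argue that a finite automaton can carry this out. Reading $\alpha,\beta,\gamma,\delta$ synchronously from index $0$ upward, the automaton needs, at stage $k$, only: the current running sum $S_k=\sum_{i<k}\beta_i \bmod p$ (finitely many values), and then it can compute the output digit $\gamma_k+\delta_k-\alpha_k S_k\bmod p$ on the fly. It also updates $S_{k+1}=S_k+\beta_k$. All of this is finite-state, so multiplication is FA. Similarly inversion: the inverse of $[\gamma]_N[\alpha]$ is $[-\alpha]$ together with an $N$-correction that is again a component-wise-in-$k$ expression in $\alpha,\gamma$ with a running partial sum of $\alpha$, hence FA-computable; and equality of normal forms is just equality of strings, trivially FA. I should also check that the chosen coding of pairs of strings keeps the lengths synchronized (pad with $0$'s) so that the convolution is regular — this is routine.

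The main obstacle, such as it is, is bookkeeping rather than mathematics: making sure the normal form is genuinely canonical (every element has exactly one representative $(\gamma,\alpha)$ with $\gamma$ supported on $k>0$), that the "extend by zeros" convention is handled so the regular language of codes and the convolutions used for the graph relations are all finite-state, and that the sign in $[x_i,x_k]=z_k^{\pm1}$ is tracked correctly in the pushing computation. Since $z_k$ is central and the commutator $[x_i,x_k]$ depends only on $\max(i,k)$ and $\operatorname{sign}(i-k)$, the computation is if anything simpler than for $G_p$ (there the single central generator $u$ accumulated a sum over all pairs; here the contributions to distinct $z_k$ never interfere), so I expect no real difficulty beyond writing the automaton out carefully.
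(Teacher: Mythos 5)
Your proposal is correct and follows essentially the same route as the paper: code an element by the pair of digit strings $(\alpha,\gamma)$ on two tracks, derive the componentwise multiplication formula $[\alpha][\beta]=[\alpha+\beta]\prod_{k>0} z_k^{\pm\alpha_k(\sum_{i<k}\beta_i)}$ by pushing generators, and let the automaton carry the running sum $\sum_{i<k}\beta_i \bmod p$ in its state. The only difference is your sign convention for the commutator, which is immaterial to the finite-state argument.
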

\begin{proof} An element $c\in N$ is of the form $ \prod_i z_i^{\gamma_i}$. An element $h$ of the group $H_p$ is represented by a pair of strings  $\alpha, \gamma$ such that $h= [\alpha]  \prod_i z_i^{\gamma_i}$. These strings are written on two tracks, $\alpha$ on top of $\gamma$.

Note that by a similar argument as above, in $H_p$ we have
 \[ [\alpha] [\beta] =  [\alpha + \beta] \prod_{k>0} z_k^{\alpha_k (\sum_{i< k} \beta_i)}.\]
 By the argument above the FA can store the value ${\alpha_k (\sum_{i< k} \beta_i)}$ for increasing $k$. This shows that the group operation can be checked by finite automaton.  
\end{proof}
\begin{conjecture} Show that $L_p$ is not FA presentable.\end{conjecture}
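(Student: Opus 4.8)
\smallskip

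\noindent\emph{Proof plan.} Assume $L_p$ has an FA-presentation. First normalise it: taking length-lexicographically least representatives (a standard move, since the relevant order and the equality-congruence are automatic) we may assume a presentation with unique representatives over a finite alphabet $\Sigma$; put $q=|\Sigma|$, let $\ell(g)$ be the length of the representative of $g$, and $B_n=\{g\in L_p:\ell(g)\le n\}$, so that $|B_n|\le q^{n+1}$. Then multiplication and inversion are graphs of genuine functions recognised on convolutions, and pumping the ``tail'' of a convolution (the block beyond the two shorter components) gives a constant $c$ with $\ell(gh)\le\max(\ell(g),\ell(h))+c$ and $\ell(g^{-1})\le\ell(g)+c$. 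Consequently the commutator relation $R=\{(\overline a,\overline b,\overline{[a,b]}):a,b\in L_p\}$ is automatic — it is quantifier-free definable — say recognised by a finite automaton $\mathcal B$ with $K$ states; and the centre $N=Z(L_p)=[L_p,L_p]$ (definable by one universal quantifier) is automatic too. Recall that $N$ is canonically $\Lambda^2 Q$ with $Q=L_p/N$ and $[a,b]=\overline a\wedge\overline b$ for the abelianisation $g\mapsto\overline g$; so whenever $e_1,\dots,e_m\in L_p$ have linearly independent images in $Q$, the $\binom m2$ values $[e_i,e_j]$ $(i<j)$ are linearly independent in $N$ (wedges of independent vectors are independent), and the $m!$ products $e_{\sigma(1)}\cdots e_{\sigma(m)}$ $(\sigma\in S_m)$ are pairwise distinct, since $e_{\sigma(1)}\cdots e_{\sigma(m)}\cdot(e_1\cdots e_m)^{-1}$ is a central element whose value determines the inversion set of $\sigma$, hence $\sigma$.

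\smallskip

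\noindent The naive attack must be ruled out first. Each such product satisfies $\ell(e_{\sigma(1)}\cdots e_{\sigma(m)})\le\max_i\ell(e_i)+mc$, so if one could find $m$ of these elements with $\max_i\ell(e_i)=O(m)$ one would get $m!\le q^{O(m)}$, a contradiction. This is hopeless, though: in any presentation of $L_p$ one has $\dim\langle\{\overline g:g\in B_n\}\rangle=O(\sqrt n)$, because if that dimension is $d$ then $B_n$ contains $d$ elements with independent images in $Q$, hence $\binom d2$ linearly independent commutators lying in $N\cap B_{n+O(c)}$, a set of size $\le q^{n+O(c)+1}$, so $\binom d2=O(n)$. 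Thus $m$ independent directions of $Q$ first become available at representation length $\Omega(m^2)$, and $m!\approx 2^{m\log m}$ never overtakes $q^{\Omega(m^2)}$. (This, incidentally, is precisely why the analogous presentations of $G_p$ and $H_p$ succeed: for those groups the corresponding dimension grows only linearly, or not at all.) So no cardinality/growth argument can work; one must exploit the \emph{locality} of finite automata.

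\smallskip

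\noindent The plan, then, is a Myhill–Nerode / communication argument against $\mathcal B$. Fix a large $m$ and elements $e_1,\dots,e_{2m}$ with independent images in $Q$, and consider $[e_i,e_j]$ for $1\le i\le m<j\le 2m$; these take $m^2$ distinct values. Cut the run of $\mathcal B$ on $\mathrm{conv}(\overline{e_i},\overline{e_j},\overline{[e_i,e_j]})$ at a carefully chosen time $t$: the state reached there is one of $K$, and together with the remaining suffix it must decide membership in $R$. One wants $t$ and a large sub-family so that the length-$t$ prefixes of the $\overline{e_i}$ agree across the sub-family, and likewise those of the $\overline{e_j}$, so that every such run is in one and the same state at time $t$; then one must derive that distinct pairs $\{i,j\}\ne\{i',j'\}$ nevertheless force the suffix to be verified against different coordinate data of the factors, so that $\mathcal B$ must already have separated them before time $t$ — contradicting that it has only $K$ states. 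Informally: however the presentation encodes an element's unbounded coordinate data as a string, the synchronous left-to-right reading of a product transmits only $O(1)$ bits across any cut, whereas verifying the commutator coordinate indexed by $(i,j)$ requires coordinate $i$ of one factor and coordinate $j$ of the other to be available simultaneously, and these dependency patterns contain complete graphs of growing size, so some cut is crossed by $\omega(1)$ essential dependencies.

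\smallskip

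\noindent\textbf{Where the difficulty lies.} This last step is the whole problem. For an \emph{arbitrary} presentation — which need not store elements coordinatewise, or even locally, at all — one must exclude a ``clever layout'' that defeats the cut argument; equivalently, one must prove that the commutator relation of the universal alternating pairing admits no automatic presentation. Turning the above bandwidth heuristic into a genuine fooling set — choosing the cut time $t$ and the aligned sub-family uniformly in the unknown automaton $\mathcal B$ and the unknown encoding — is where essentially all the work will be. A plausible reduction that might isolate the combinatorics cleanly is to pass, by a parameter-free first-order interpretation, to the two-sorted pairing structure $(Q,\Lambda^2 Q;+,\wedge)$ and to show directly that it is not FA-presentable.
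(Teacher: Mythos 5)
The paper states this as an open conjecture and offers no proof, so there is nothing of the authors' to compare your argument against; the only question is whether your proposal settles the conjecture, and by your own account it does not. The entire content of the claim is concentrated in the step you label ``Where the difficulty lies'': showing that \emph{no} automatic encoding --- not merely a coordinatewise or local one --- can recognise the commutator relation of the universal alternating pairing. Everything before that point is preparatory and essentially sound: the normalisation to unique representatives, the length inequality $\ell(gh)\le\max(\ell(g),\ell(h))+c$, the automaticity of the commutator relation and of the centre, and the distinctness of the $m!$ permuted products via inversion sets are all standard and correct, and your observation that the naive growth/counting argument cannot work for $L_p$ (while explaining why $G_p$ and $H_p$ escape) is genuinely useful. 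But a fooling-set argument that is ``uniform in the unknown automaton and the unknown encoding'' is precisely the kind of step that tends to collapse for FA-presentations, which are free to scramble an element's data globally along the string; without a concrete invariant that survives arbitrary re-encoding (for instance a Ramsey-type or dimension bound on automatic ternary relations that the pairing $(Q,\Lambda^2Q;+,\wedge)$ provably violates), the bandwidth heuristic is a research plan, not a proof. The proposed reduction to the two-sorted pairing structure is a reasonable way to isolate the combinatorics, but it relocates the open problem rather than solving it.

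One local correction. Your derivation of $\binom d2=O(n)$ does not follow from placing $\binom d2$ linearly independent commutators inside a set of size $q^{n+O(c)+1}$; that only yields $\binom d2\le q^{n+O(1)}$, which is exponentially weaker. To get the bound you need, count the full span: the $p^{\binom d2}$ elements of the subgroup generated by those commutators are each products of at most $(p-1)\binom d2$ of them, hence by balanced bracketing and repeated use of $\ell(gh)\le\max(\ell(g),\ell(h))+c$ they all lie in $B_{n+O(\log d)}$, so $p^{\binom d2}\le q^{n+O(\log d)}$ and therefore $\binom d2=O(n)$. The conclusion $d=O(\sqrt n)$ survives, but since this bound is exactly what you use to rule out the naive attack (with $d=O(n)$ the factorial count \emph{would} win), the argument for it has to be the correct one.
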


If only finitely many things do not commute with the others then
it is not isomorphic to the group which we constructed. So it is
a further example of such a group. The reason is quite easy: If
$x_1,..,x_k$  do not commute with each other and $y_1,y_2,...$  commute
all with each other but not with $x_1,..,x_k$  then every basis has
two things which commute.

\begin{fact} The pro-$p$ completion $\hat H_p$ of $H_p$ is B\"uchi presentable. \end{fact}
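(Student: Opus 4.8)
The plan is to realize $\hat H_p$ concretely as pairs of infinite $p$-ary strings under a twisted componentwise addition, and then to rerun almost verbatim the finite automaton already built for $H_p$, observing that the passage from finite words to $\omega$-words costs nothing: all bookkeeping is modulo $p$, and the only acceptance condition needed is a safety condition.

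\textbf{Step 1: an explicit model of $\hat H_p$.}
Recall that every element of $H_p$ is uniquely $[\alpha]\prod_{k>0}z_k^{\gamma_k}$ with $\alpha,\gamma\colon\NN\to F_p$ finitely supported, and set $U_n=\langle x_k,z_k:k\ge n\rangle$. I would first check that $U_n$ is a normal subgroup of $p$-power index (conjugating $x_k$ by any $x_i$ multiplies it by $z_k^{\pm1}$, which stays in $U_n$ when $k\ge n$, and the $z$'s are central; $H_p/U_n$ is generated by $\bar x_0,\dots,\bar x_{n-1}$, nilpotent of class $2$ and of exponent $p$), that $\bigcap_n U_n=1$, and that the map $H_p/U_{n+1}\to H_p/U_n$ simply forgets the coordinates of index $n$. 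Hence $\hat H_p=\varprojlim_n H_p/U_n$ is, as a set, the collection of all pairs $(\alpha,\gamma)$ of (now arbitrary) functions $\NN\to F_p$, and the group law is obtained by extending $[\alpha][\beta]=[\alpha+\beta]\prod_{k>0}z_k^{\alpha_k\sum_{i<k}\beta_i}$ coordinatewise:
\[
(\alpha,\gamma)\cdot(\beta,\delta)=\bigl(\,\alpha+\beta,\ \gamma+\delta+c(\alpha,\beta)\,\bigr),\qquad
c(\alpha,\beta)_k=\alpha_k\sum_{i<k}\beta_i\ \ (\mathrm{mod}\ p).
\]
Every output coordinate depends on only finitely many input coordinates, so this is well defined; the identity is $(0,0)$, and solving $(\alpha,\gamma)\cdot(\beta,\delta)=(0,0)$ gives $(\alpha,\gamma)^{-1}=(-\alpha,\ -\gamma+c'(\alpha))$ with $c'(\alpha)_k=\alpha_k\sum_{i<k}\alpha_i\bmod p$.

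\textbf{Step 2: the Büchi presentation.}
Code $(\alpha,\gamma)$ by the $\omega$-word over $\{0,\dots,p-1\}^2$ whose $k$-th letter is $(\alpha_k,\gamma_k)$. The domain is then the full set of such $\omega$-words, which is $\omega$-regular, and equality is read letter by letter. For the graph of multiplication, read the convolution of the codes of $(\alpha,\gamma),(\beta,\delta),(\epsilon,\zeta)$; a deterministic automaton keeps in its state the running value $s=\sum_{i<k}\beta_i\bmod p$ (only $p$ possibilities) and, at position $k$, verifies the two local congruences $\epsilon_k\equiv\alpha_k+\beta_k$ and $\zeta_k\equiv\gamma_k+\delta_k+\alpha_k s\pmod p$, moving to a rejecting sink at the first violation. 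With every non-sink state declared accepting, a run is Büchi-accepting iff it never enters the sink iff the triple is valid --- this is the usual way a Büchi automaton recognizes a safety language. The graph of inversion is handled identically (tracking $\sum_{i<k}\alpha_i\bmod p$), and $\{(0,0)\}$ is trivially $\omega$-regular. Therefore $\hat H_p$ is Büchi presentable.

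I expect the only genuine work to lie in Step 1 --- pinning down $\hat H_p$ as this twisted product of two copies of $F_p^{\NN}$, and in particular that the multiplication formula valid for finitely supported strings really passes to the completion. Step 2 is then essentially the finite-automaton argument for $H_p$ copied word for word, the move to $\omega$-words being free because the required acceptance condition is safety, so any Büchi automaton all of whose live states are accepting suffices.
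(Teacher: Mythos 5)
Your proposal is correct and takes essentially the same approach as the paper: elements of $\hat H_p$ are coded as pairs of infinite strings over $F_p$ (i.e.\ the central extension of $C_p^\omega$ by itself with $[x_i,x_k]=z_k$), and the finite automaton used for $H_p$ is rerun on $\omega$-words, acceptance being the trivial safety condition. Your Step 1, identifying $\hat H_p$ with the inverse limit of $H_p/U_n$ along the tail subgroups $U_n=\langle x_k,z_k : k\ge n\rangle$ and writing out the twisted multiplication explicitly, is just a more detailed justification of the structural description the paper simply asserts.
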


\begin{proof} Let $\hat Q$ be the pro-$p$ completion of $Q$, which is $C_p^\omega$ (cartesian power) viewed as a compact group. $\hat H_p$ is a central extension of $\hat Q$ by itself where the first $\hat Q$ has  the topological generators $z_k$, the second has the  $x_i$, and we have $[x_i, x_k] = z_k$ for $i< k$, as before. 

An element $h$ of the group $\hat H_p$ is represented by a pair of infinite strings  $\alpha, \gamma$ such that $h= [\alpha]  \prod_i z_i^{\gamma_i}$. 
So we can use the same FA as above, but  working on infinite strings, to check the group operation.  \end{proof}

%

%

         \part{Computability theory and set theory} 
         
            \section{Greenberg and Nies: cardinal characteristics and computability}

 Nicholas Rupprecht,  in his  2010 PhD thesis   supervised by  Blass \cite{Rupprecht:thesis}, studied the connection of cardinal characteristics of the continuum in set theory and   highness properties from computability theory. He developed this connection systematically as coming from a single source. Somewhat unfortunately, the thesis is  somewhat  technically written and  the notation is hard to access, which may be  a reason for the fact  that no immediate reaction by either of the two communities  to this unexpected connection resulted. 
 
  The   cardinal characteristics given by null sets correspond to highness properties defined in terms of algorithmic randomness. For instance, $\text{non}(\+ N)$, the least size of a non-null set of reals, corresponds to the strength of an oracle that computes a Schnorr test containing all recursive bit sequences. Rupprecht published a single   paper mainly on the computability theoretic impact of that notion~\cite{Rupprecht:10}.  Rupprecht called this property ``Schnorr covering", which is confusing because it does \emph{not} correspond to  the characteristic $\text{cov}(\+ N)$ (the least size of a class of null sets with union $\mathbb R$); later on, Brendle et al.\  suggested the term  ``weakly Schnorr engulfing"    \cite{Brendle.Brooke.ea:14}.   We note that Rupprecht had attended the NSF FRG-funded  school on computability and  randomness at Gainesville in 2008,  where Nies gave a tutorial on randomness; this may have prompted him to make the connection between measure theoretic cardinal invariants and randomness.   He    started a career as an investor shortly after his defence.  According to LinkedIn his last job was as  the Global Director of Operations at Virtu Financial.  Blass mentioned he didn't work there any longer. Apparently he  has disappeared from view.
 
A quote  from Brendle et al.\  \cite{Brendle.Brooke.ea:14}: the analogy  occurred  implicitly in the work of Terwijn and Zambella \cite{Terwijn.Zambella:01}, who showed that being  low for Schnorr tests is equivalent to  being computably traceable. (These are  lowness properties, saying the oracle is close to being computable; we obtain  highness properties by taking   complements.)  This is the computability theoretic analog of a result by Bartoszy\'nski~\cite{Bartoszynski:84} that the cofinality of the null sets (how many null sets does one need to cover all null sets?) equals the domination number for traceability.
  (Curiously, Terwijn and Zambella alluded to some connections with  set theory in their work. However, it was not Bartoszy\'nski's work, but rather   work on rapid filters by Raisonnier~\cite{Raisonnier:84}.) Their proof bears striking similarity to Bartoszy\'nski's; for instance, both proofs use measure-theoretic calculations involving independence.  See also the book references \cite[2.3.9]{Bartoszynski.Judah:book} and \cite[8.3.3]{Nies:book}. (End quote.)

             \section{Lempp, Miller, Nies and Soskova:   Analogs in computability \\ of combinatorial cardinal characteristics}

\n \emph{Brief summary.} Our basic objects are infinite sets of natural numbers. In set theory,   the MAD number is the least cardinality of a maximal almost disjoint class of sets of natural numbers. The ultrafilter number is the least size of an ultrafilter base.

We study computability theoretic analogs of these cardinals. In our approach, all the basic objects  will be infinite computable sets. A class of such basic objects is  encoded as the set of  ``columns" of a set R, which allows us to study the  Turing complexity of the class.

We show that each non-low oracle computes a MAD class R, give a finitary construction of a c.e. MAD set (compatible with permitting), and on the other hand show that a 1-generic below the halting problem does not compute a MAD class.  We also provide some initial results on ultrafilter bases in this setting.

\medskip
The original impetus for this work  resulted from some interactions of Nies with Brendle, Greenberg and others  at the August meeting on set theory of the reals  at Casa Matem\'atica Oaxaca in early August, which was  organised by Hrusak, Brendle, and Steprans.  The computability results reported below are due to Lempp, Miller, Nies and Soskova (in prep.), based on joint work that happened during Nies' visit in Madison end of August, 2 weeks after  the end of the CMO meeting. Downey's visit in Madison in September further  advanced it.

\subsection{Set theory.} Let $\frb$  be the unbounding number (the least size of a class of functions on $\omega$ that is not dominated by a single function). 

Several cardinal characteristics are based on cardinals  of subclasses of $[\omega]^\omega/{=^*}$, i.e. the infinite subsets of $\omega$  under almost equality.  One of them is    called the almost disjointness number,  denoted $\mathfrak a$. This  is  the minimal size of a maximal almost disjoint (MAD) family of subsets of $\omega$. 
The following is  well known.
\begin{fact}  \label{fa: ba} $\frb \le \mathfrak a$. \end{fact}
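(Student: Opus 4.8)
The plan is to show that any almost disjoint family of size less than $\frb$ cannot be maximal; equivalently, given an infinite almost disjoint family $\{A_\xi : \xi < \kappa\}$ of subsets of $\omega$ with $\kappa < \frb$, I will produce an infinite set $B \subseteq \omega$ that is almost disjoint from every $A_\xi$. Since a MAD family must be infinite (a finite almost disjoint family is easily extended), this suffices: no family of size $<\frb$ is MAD, hence $\mathfrak a \ge \frb$.

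First I would fix a countably infinite subfamily $\{A_n : n \in \omega\}$ of the given family and, for the complement of $\bigcup_{n} A_n$, handle the trivial case separately: if $\omega \setminus \bigcup_n A_n$ is infinite it already serves as (a set almost disjoint from, in fact disjoint from all but finitely many of) the whole family after a small adjustment, so assume the $A_n$ cover $\omega$ modulo a finite set. Replacing $A_n$ by $A_n \setminus (A_0 \cup \dots \cup A_{n-1})$ I may further assume the $A_n$ are pairwise disjoint and infinite, with union cofinite. Now for each $\xi < \kappa$ define $f_\xi \colon \omega \to \omega$ by letting $f_\xi(n)$ be large enough that $A_\xi \cap A_n \subseteq f_\xi(n)$ and also $A_\xi$ meets $\{0,\dots,f_\xi(n)\}$ in at least $n$ of the first few blocks $A_0,\dots$ — more precisely, arrange $f_\xi$ to bound the finite set $A_\xi \cap A_n$ for every $n$. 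Since $\kappa < \frb$, the family $\{f_\xi : \xi < \kappa\}$ is dominated: there is a single $g \colon \omega \to \omega$ with $f_\xi \le^* g$ for all $\xi$, and WLOG $g$ is strictly increasing.

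The key step is then to build $B$ by picking, for each $n$, one element $b_n \in A_n$ with $b_n \ge g(n)$ (possible since $A_n$ is infinite), and setting $B = \{b_n : n \in \omega\}$. This $B$ is infinite since the $A_n$ are pairwise disjoint. For any $\xi$, choose $N$ with $f_\xi(n) \le g(n)$ for all $n \ge N$; then for $n \ge N$, $b_n \in A_n$ but $b_n \ge g(n) \ge f_\xi(n)$, while $A_\xi \cap A_n$ is contained in $f_\xi(n)$, so $b_n \notin A_\xi$. Hence $B \cap A_\xi \subseteq \{b_0,\dots,b_{N-1}\}$ is finite, i.e. $B$ is almost disjoint from every $A_\xi$, contradicting maximality.

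The main obstacle — really the only point requiring care — is the bookkeeping in the disjointification and in defining the $f_\xi$: one must make sure that after passing to pairwise disjoint infinite pieces the covering/cofiniteness assumption is legitimate (the degenerate cases where finitely many $A_n$ already fail to have infinite leftovers need the trivial argument), and that $f_\xi(n)$ genuinely bounds $A_\xi \cap A_n$ as a subset of $\omega$ rather than just its cardinality. Once the indexing is set up so that "$A_\xi$ meets $A_n$ below $f_\xi(n)$'' holds for all $n$, domination by $\frb$ does the rest and the diagonal set $B$ witnesses non-maximality.
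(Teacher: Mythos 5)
Your core construction is exactly the paper's argument: fix a countably infinite subfamily $\{A_n\}$, bound the traces $A_\xi\cap A_n$ by functions $f_\xi$, use $\kappa<\mathfrak b$ to obtain a single dominating function $g$, and diagonally select one element of each $A_n$ above $g(n)$ to produce an infinite set almost disjoint from every member of the family. That part is correct; the paper does the same thing, except that it chooses $g(n)\in r_n$ directly instead of first disjointifying the countable subfamily.

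Two auxiliary claims, however, are false as written. First, the ``trivial case'': if $C=\omega\setminus\bigcup_n A_n$ is infinite, it is disjoint from the chosen countable subfamily, but it need not be almost disjoint from the \emph{rest} of the family, and no small adjustment fixes this. For instance, partition the odd numbers into infinitely many infinite sets $A_n$ and let a further member of the family be the set of even numbers; then $C$ is precisely that member. Fortunately the case split is unnecessary: your main argument never uses the assumption that $\bigcup_n A_n$ is cofinite (the witness $B$ lives inside $\bigcup_n A_n$), so the right repair is simply to delete the case distinction. Second, the parenthetical ``a finite almost disjoint family is easily extended'' is wrong: $\{\text{evens},\text{odds}\}$ is a finite AD family that is maximal. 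Finite families are irrelevant only because $\mathfrak a$ is by definition the least size of an \emph{infinite} MAD family, which is also the convention the paper's proof uses by taking $\kappa$ infinite. Finally, define $f_\xi$ only for $\xi$ outside the countable subfamily (or assign an arbitrary value at the self-index), since $A_n\cap A_n$ is infinite; almost disjointness of $B$ from the $A_n$ themselves is already guaranteed by your disjointified choice of the $b_n$. With these repairs your proof is complete and coincides with the paper's.
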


\begin{proof}  Suppose that the infinite cardinal $\kappa$ is less than $\frb$. Let $\+ V$ be an almost disjoint family of size  $\kappa$. We show that $\+ V$ is not MAD. Let $\seq {r_n}$, $n \in \omega$,  be a sequence of pairwise distinct elements of $\+ V$. Given $y \in \+ V$ let \bc $f_y(n) = \max  (y \cap r_n)$ \ec
with the convention that $\max \ES = 0$. Let $g$ be an increasing  function such that $g(n) \in r_n$ for each $n$  and $\forall y \in \+ V \forall^\infty n  \, [ g(n) > f_y(n)]$.  It is clear that the family $\+ V \cup \{\range (g)\}$ is almost disjoint.  \end{proof}

Other  cardinal characteristics are based on properties   of subclasses of  the infinite sets modulo almost equality: 
\bi \item  the ultrafilter number $\fru$ (the least size of a set  with upward closure  a free ultrafilter on $\omega$), \item the  tower number $\mathfrak t$ (the minimum size of a   linearly ordered subset that can't be extended by putting a new element below all given elements), \item the  independence number $\mathfrak i$  \ei  and several others. See e.g.\ \cite{Blass:10} or the  talk by Diana Montoya at the CIRM meeting on descriptive set theory in June 2018. This talk  contained a Hasse diagram of these cardinals with ZFC inequalities, reproduced here with permission:

\bc \includegraphics[width=2.5in]{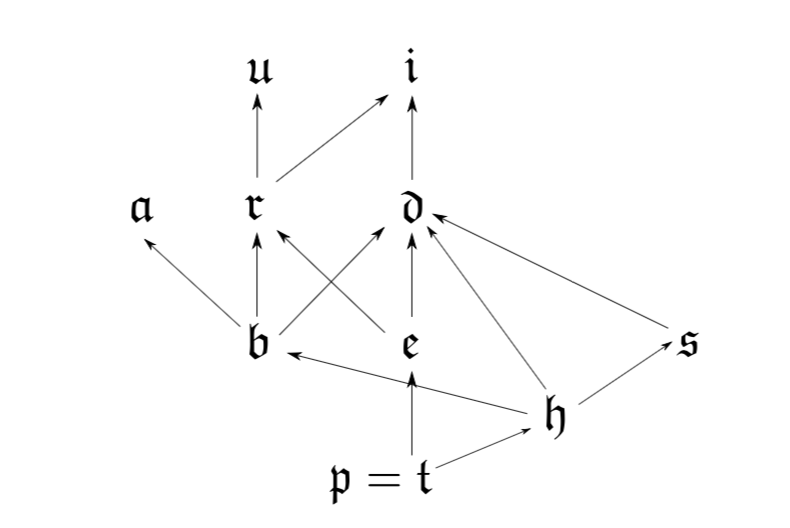} \ec

Note that $\mathfrak r$ and $\mathfrak s$ are the unreaping and splitting numbers, respectively. They are given by relations, and their analogs in computability  have been studied \cite{Brendle.Brooke.ea:14}. Furthermore,  $\mathfrak e$ is the escaping number due to Brendle and Shelah. Its analog has   recently been considered in computability theory by Ivan Ongay Valverde and  Paul Tveite, two PhD students from Madison. Finally,   $\mathfrak h$ is the distributivity number.   For another diagram see Soukup~\cite{Soukup:18}.

\subsection{Analogous mass problems   in computability} For a set $F$,   \bc $F^{[n]}$   denotes   the column $\{ x \colon \la x, n \ra \in F\}$.  \ec We will also denote this by $F_n$ if no confusion is possible. 

 A systematic way to translate these characteristics into highness properties  of oracles  is  as follows. 
We consider  subclasses of $\text{Rec}/{=^*} \setminus \{0\}$, identified with classes of   infinite recursive sets in the context of  almost inclusion.    Such a subclass $\+ C$  is encoded by a set $F$ such that \bc $\+ C = \+ C_F =   \{F^{[n]} \colon n \in \NN\}$. \ec We view these classes as mass problems. We   compare them via Muchnik reducibility $\le_W$ and the stronger, uniform Medvedev reducibility $\le_S$.

\subsection{The mass problems $\+ A$ and $\+ T$}
We say that (the class $\+ C_F$ described by) $F \sub \NN$ is a \emph{almost disjoint, AD in brief,} if each $F_n$ is infinite, and  $F_n \cap F_k $ is finite for  each $n\neq k$.

\begin{definition} The mass problem  $\+ A$  corresponding to the almost disjointness number $\fra$ is the class of   sets $F$  such that $\+ C_F$ is  maximal almost disjoint (MAD)  at the recursive sets. Namely, $\+ C_F$ is AD,  and   for each infinite recursive set $R$ there is $n$ such that $R\cap F_n $ is infinite. \end{definition}
We need to get this out of the way:
\begin{fact}\label{fa:silly} No MAD set $F$ is computable. \end{fact}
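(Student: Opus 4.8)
The plan is a direct diagonalization. Assuming towards a contradiction that $F$ is computable, I would build an infinite recursive set $R$ that is almost disjoint from every column $F_n = F^{[n]}$; this contradicts the maximality clause in the definition of $\+ A$, namely that for each infinite recursive $R$ some $F_n$ meets $R$ infinitely.

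First I would isolate the one structural fact that creates room for the construction. Since $\+ C_F$ is AD, for any $m$ the set $F_{m+1}$ meets each of $F_0,\dots,F_m$ in a finite set, so $F_{m+1}\setminus(F_0\cup\cdots\cup F_m)$ is infinite; in particular $\NN\setminus(F_0\cup\cdots\cup F_m)$ is infinite. This is the only use of the AD hypothesis, and it is worth stating on its own because finitely many AD sets may perfectly well cover $\NN$ (the evens and the odds, say), so one cannot avoid all ``restrained'' columns simultaneously — the point is that a not-yet-used column always guarantees the space needed to extend $R$.

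Then I would do the construction: set $r_0 = 0$ and, recursively, let $r_k$ be the least $x > r_{k-1}$ with $x \notin F_0 \cup \cdots \cup F_{k-1}$. Such an $x$ exists by the fact above, and since $F$ is computable the step $r_{k-1}\mapsto r_k$ is effective, so $k\mapsto r_k$ is a computable strictly increasing function and $R = \{r_k : k \in \NN\}$ is an infinite recursive set. Finally I would verify almost disjointness: fix $n$; for every $k > n$ we have $F_n \subseteq F_0 \cup \cdots \cup F_{k-1}$, so $r_k \notin F_n$ by construction, whence $R \cap F_n \subseteq \{r_0,\dots,r_n\}$ is finite. Thus $R$ is an infinite recursive set meeting every $F_n$ finitely, contradicting the definition of $\+ A$; hence no MAD $F$ is computable. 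There is no real obstacle here — the only step that takes a moment's thought is the observation in the second paragraph that a fresh column supplies room to keep extending $R$.
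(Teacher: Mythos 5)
Your proof is correct and follows essentially the same diagonalization as the paper: both build a computable increasing sequence $r_0 < r_1 < \cdots$ where $r_k$ avoids $F_0 \cup \cdots \cup F_{k-1}$, the only cosmetic difference being that the paper places $r_k$ inside $F_k$ to witness existence while you instead note directly that the complement of finitely many columns is infinite. Either way the resulting recursive set $R$ meets every column finitely, contradicting maximality.
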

\begin{proof}
Suppose $F$ is AD and computable. Let $r_{-1} =0$, and $r_{n}$ be the least number $r > r_{n-1}$  such that $ r \in F_n - \bigcup_{i < n} F_i$. Then the computable set $R = \{r_0, r_1, \ldots\}$ shows that $F$ is not MAD. 
\end{proof}
\begin{definition} \label{def:tower}  We say that $G \sub \NN$ is a \emph{tower} (or $\+ C_G$ is a    tower) if for each $n$ we have  $G_{n+1} \sub^* G_n$ and $G_n - G_{n+1}$ is infinite. 

By a function  \emph{associated} to $G$  we mean an  increasing   function $p$ satisfying   the conditions $p(n ) \in \bigcap_{i\le  n} G_i$.
 \end{definition}

\begin{definition} The mass problem  $\+ T$  corresponding to the tower    number $\mathfrak t$ is the class of   sets $G$  such that $\+ C_G$ is a    tower that is maximal in the   recursive sets. Namely,   for each infinite recursive set $R$ there is $n$ such that $R- G_n $ is infinite. 

  \end{definition} 

\begin{fact} No tower is c.e. \end{fact}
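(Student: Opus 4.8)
The plan is to mimic the proof of Fact~\ref{fa:silly} (that no MAD set is computable), but working with the extra structure of a tower and using only that $G$ is c.e. Recall that if $G$ is a tower then for each $n$ we have $G_{n+1}\subseteq^* G_n$ and each difference $G_n-G_{n+1}$ is infinite; in particular every $G_n$ is infinite and there is an increasing associated function $p$ with $p(n)\in\bigcap_{i\le n}G_i$. The goal is to produce an infinite recursive set $R$ such that $R-G_n$ is infinite for every $n$, contradicting maximality in the recursive sets.

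First I would observe that if $G$ is c.e.\ then each column $G_n$ is c.e.\ uniformly in $n$, so we may fix a computable enumeration and write $G_n[s]$ for the finite set of elements of $G_n$ enumerated by stage $s$. The idea is to build $R=\{r_0<r_1<\cdots\}$ recursively so that $r_n$ is chosen to be a large number that has \emph{not yet} been enumerated into $G_n$ at the stage when we pick it; since enumeration is monotone but we never revisit $r_n$, we cannot directly guarantee $r_n\notin G_n$, so instead I would arrange the bookkeeping so that for each fixed $n$, infinitely many of the $r_k$ (for $k\ge n$) are chosen outside $G_n$. Concretely: at step $k$, having chosen $r_0<\cdots<r_{k-1}$ at stages $s_0<\cdots<s_{k-1}$, search for the least $r>r_{k-1}$ and a stage $s>s_{k-1}$ such that $r\notin G_0[s]\cup G_1[s]\cup\cdots\cup G_k[s]$; such an $r$ exists because $G_0\cup\cdots\cup G_k$ — being a finite union with $G_0\supseteq^* G_1\supseteq^*\cdots$, hence essentially just $G_0$ up to a finite set — is not all of $\NN$ in the sense that...

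Here is the subtlety and the main obstacle: $G_0$ could be cofinite (the tower condition only controls the differences $G_n-G_{n+1}$, not the complement of $G_0$), so "$r\notin G_0\cup\cdots\cup G_k$" may be unsatisfiable. To get around this I would instead \emph{not} try to keep $R$ outside the $G_n$, but rather split the construction: build $R$ so that $R\cap(G_n-G_{n+1})$ is infinite for each $n$. Since the sets $G_n-G_{n+1}$ are pairwise almost disjoint and each is infinite, and since $G$ is c.e., I can dovetail: at step $\langle n,j\rangle$ put into $R$ some fresh large element that has been enumerated into $G_n$ but not (yet, and hopefully never) into $G_{n+1}$ — but again "not ever into $G_{n+1}$" is $\Pi^0_1$, not decidable. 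The clean fix is the standard one for such c.e.\ constructions: use the associated function. If $G$ is c.e., then $p(n)=\mu x\,[x\in\bigcap_{i\le n}G_i]$ would need $\bigcap_{i\le n}G_i$ enumerable, which it is not directly, but $\bigcap_{i\le n}G_i =^* G_n$ and $G_n$ is c.e., so one can still compute, from $n$, \emph{some} element of $G_n$ that lies in all $G_i$ for $i\le n$ — namely, enumerate $G_n$ until an element appears that has also appeared in $G_0,\ldots,G_{n-1}$, which must happen since $G_n\subseteq^*G_i$. Thus I get a computable associated function $p$. Now the complement trick: the set $R=\{p(0),p(2),p(4),\ldots\}$ — or better, a recursive set meeting each "annulus" $G_n-G_{n+1}$ — is recursive, infinite, and I claim $R-G_n$ is infinite for each $n$, because $R$ contains elements of $G_m-G_{m+1}\subseteq\NN-G_{m+1}\subseteq\NN-G_n$ for all $m\ge n$.

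So the steps, in order: (1) from c.e.-ness of $G$, extract a computable associated function $p$ and, more generally, a uniformly computable way to find an element of $G_n-G_{n+1}$ (enumerate $G_n$ until an element appears that later fails to appear in $G_{n+1}$ — wait, that is again $\Pi^0_1$; instead use $p(n)\in\bigcap_{i\le n}G_i$ and $p(n+1)\notin$ that may fail — the honest route is to pick two elements $a<b$ of $G_n$ with $a\in G_n-G_{n+1}$ located by waiting: since $G_n-G_{n+1}$ is infinite and $G_n$ c.e., for any bound we can enumerate $G_n$ until we see more than that many elements, and at least one of the new ones large enough will not be in the finite part $G_{n+1}[s]$ — this still does not prove it stays out). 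At this point the real obstacle is sharp: deciding membership in $G_n-G_{n+1}$ is not computable. The resolution that actually works is to build $R$ by a finite-injury-free priority-style search that is allowed to be wrong finitely often on each $G_n$, which is exactly the content one gets from $\frb$-type arguments as in Fact~\ref{fa: ba}; I would therefore finish by invoking the computable analog of the inequality $\frb\le\fra$ argument adapted to towers — using the computable associated function $p$ as the "dominating" object — to diagonalize and produce the required recursive $R$ with $R-G_n$ infinite for all $n$. The hard part, and where I would spend the most care, is precisely converting the $\Pi^0_1$ "stays out of $G_{n+1}$" requirements into something a computable construction can honor, which the associated-function device does: $R=\range(p)$ is recursive, and $p(m)\in G_n$ for $m\ge n$ while $p(m)\notin G_{m+1}$ ensures $\range(p)\not\subseteq^* G_n$ via the elements $p(m)$ with $m\ge n$ lying outside $G_{m+1}\supseteq$ no wait — one needs $p(m)\notin G_n$ for infinitely many $m$; since $p(m)\in\bigcap_{i\le m}G_i$ this fails. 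I would therefore ultimately use $R=\{q_n:n\in\NN\}$ where $q_n$ is chosen computably in $G_n-G_{n+1}$ via the genuinely correct observation that $G_n-G_{n+1}$, while not decidable, contains a computable infinite subset is false too; the correct final move is the enumeration-with-delayed-verification that makes this a standard (if slightly delicate) c.e.\ diagonalization, and I flag that verification as the crux.
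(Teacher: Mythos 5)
There is a genuine gap, and it sits at the very start: you have inverted what it means to contradict maximality. The tower $G$ (an element of the mass problem $\mathcal{T}$) is maximal at the recursive sets when \emph{every} infinite recursive $R$ has $R-G_n$ infinite for \emph{some} $n$; to refute this you must exhibit an infinite recursive $R$ with $R-G_n$ \emph{finite} for every $n$, i.e.\ $R\subseteq^* G_n$ for all $n$ --- a recursive set that extends the tower. Your stated goal, an $R$ with $R-G_n$ infinite for every $n$, is not a contradiction at all: such an $R$ satisfies the maximality clause rather than violating it. All of the obstacles you then struggle with --- that $G_0$ may be cofinite, that membership in $G_n-G_{n+1}$ is only $\Pi^0_1$, the abandoned domination/priority detour --- are artifacts of this misreading, and the proposal never arrives at a complete argument (you flag the crux as unresolved yourself).

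The irony is that you already hold the correct proof. Your observation that a c.e.\ $G$ yields a \emph{computable} associated function is exactly the paper's argument: to compute $p(n)$, enumerate $G$ until some $x>p(n-1)$ has appeared in each column $G_0,\dots,G_n$; the search terminates because $\bigcap_{i\le n}G_i =^* G_n$ is infinite. Taking $p$ strictly increasing, $R=\range(p)$ is an infinite recursive set, and for every $n$ and every $m\ge n$ we have $p(m)\in\bigcap_{i\le m}G_i\subseteq G_n$, so $R-G_n\subseteq\{p(0),\dots,p(n-1)\}$ is finite for all $n$. Thus $R$ extends the tower, contradicting $G\in\mathcal{T}$. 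The very property you dismissed as a failure (``since $p(m)\in\bigcap_{i\le m}G_i$ this fails'') is precisely what finishes the proof once the target is stated correctly; no diagonalization against the $\Pi^0_1$ sets $G_n-G_{n+1}$ is needed.
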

Otherwise there is a computable associated function $p$. The range of  $p$ would extend the tower.

\begin{fact} \label{fa:ATA} $\+ A \le_S \+ T \le_S \+ A$. \end{fact}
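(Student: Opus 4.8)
The plan is to describe explicit Turing functionals realizing the two reductions; the direction $\+ T \le_S \+ A$ is essentially a direct translation, while $\+ A \le_S \+ T$ needs a small twist.

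For $\+ T \le_S \+ A$: given $F \in \+ A$ with columns $F_n$, I would set $G_n = \NN \setminus \bigcup_{i<n} F_i$, so that $G_0 = \NN$ and each $G_n$ is recursive, being the complement of a finite union of recursive sets, uniformly in $F$. Then $G_{n+1} \sub G_n$, and $G_n - G_{n+1} = F_n \setminus \bigcup_{i<n} F_i$ is cofinite in $F_n$ since $\+ C_F$ is AD, hence infinite; so $\+ C_G$ is a tower. If $R$ is infinite recursive, maximality of $\+ C_F$ gives $m$ with $R \cap F_m$ infinite, and then $R - G_{m+1} \supseteq R \cap F_m$ is infinite, so $\+ C_G$ is maximal in the recursive sets. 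Thus $G \in \+ T$ and $G \leT F$ uniformly.

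For $\+ A \le_S \+ T$ the naive attempt — use the successive differences $G_n \setminus G_{n+1}$ of a tower as the columns of an AD family — gives an AD family but not a maximal one, since it never meets an infinite recursive set lying (mod finite) outside $G_0$, and one cannot decide uniformly in $G$ whether $\NN \setminus G_0$ is infinite, so one cannot simply adjoin $\NN\setminus G_0$ as a fresh column. My fix is to carve out room inside the tower: given $G \in \+ T$, let $D = G_0 \setminus G_1$ (infinite recursive, by the tower property) and split $D = D_0 \sqcup D_1$ into its even- and odd-position elements (both infinite recursive); then define the columns of $F$ by $F^{[0]} = (\NN \setminus G_0) \cup D_1$, $F^{[1]} = D_0$, and $F^{[n]} = G_{n-1} \setminus G_n$ for $n \ge 2$. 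Each column is an infinite recursive set and $F \leT G$ uniformly. One checks $\+ C_F$ is AD: all pairwise intersections among $\NN \setminus G_0$ and the $G_m \setminus G_{m+1}$ are finite, using $G_m \sub^* G_{m'}$ for $m > m'$ and disjointness of $\NN \setminus G_0$ from $G_0$, and splitting $D$ only shrinks these. For maximality, suppose $R$ is infinite recursive with $R \cap F^{[n]}$ finite for all $n$; then $R \cap D$ is finite, so $R \cap (G_m \setminus G_{m+1})$ is finite for every $m \ge 0$, whence $R \cap (G_0 \setminus G_n) \sub \bigcup_{m<n}(R \cap (G_m \setminus G_{m+1}))$ is finite for every $n$ (any $x \in G_0 \setminus G_n$ lies in $G_m \setminus G_{m+1}$ for $m = \max\{j<n : x \in G_j\}$). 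So the recursive set $R \cap G_0$ is almost contained in every $G_n$; by maximality of the tower $\+ C_G$ it must be finite, so $R \setminus G_0$ is infinite, and since $R \setminus G_0 \sub \NN \setminus G_0 \sub F^{[0]}$ this makes $R \cap F^{[0]}$ infinite, a contradiction. Hence $\+ C_F$ is maximal, $F \in \+ A$, and $\+ A \le_S \+ T$.

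I expect the tower-to-MAD direction to be the only real obstacle: the point is to repair maximality for sets escaping $G_0$ without spoiling almost-disjointness and without losing uniformity, and the little argument that the repaired family still has all the required properties — in particular that no infinite recursive set slips past every column — is where the content lies. The MAD-to-tower direction should go through purely formally.
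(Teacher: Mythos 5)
Your proof is correct, and its two halves use essentially the same functionals as the paper: your $G_n=\NN\setminus\bigcup_{i<n}F_i$ is exactly the paper's operator $\mathrm{Cp}$ for $\+T\le_S\+A$, and your columns $G_{n-1}\setminus G_n$ are the paper's operator $\mathrm{Diff}$ for $\+A\le_S\+T$. The genuine difference is your extra column $F^{[0]}=(\NN\setminus G_0)\cup D_1$ (with the even/odd split of $G_0\setminus G_1$ to keep it infinite and uniform). The paper's proof of $\+A\le_S\+T$ argues directly that $R-G_n$ infinite implies $R\cap(G_i\setminus G_{i+1})$ infinite for some $i<n$; as written this only works when $R\setminus G_0$ is finite, since $R-G_n\sub(R-G_0)\cup\bigcup_{i<n}(G_i\setminus G_{i+1})$, so it implicitly treats towers as starting at (almost all of) $\NN$, whereas the paper's Definition~\ref{def:tower} does not require $G_0=^*\NN$. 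Your patched family repairs exactly this corner case: an infinite recursive $R$ that lives mod finite outside $G_0$ is caught by $F^{[0]}$, and your verification that almost-disjointness and maximality survive the patch is sound. So your argument buys a reduction valid for the definition as literally stated, at the cost of a slightly more elaborate functional; the paper's version is cleaner under the additional convention on $G_0$.
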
 
\begin{proof} To check that $\+ A \le_S \+ T$, given a set $G$ let $\mathrm{Diff}(G) $ be the set $D$ such that $D_n = G_n - G_{n+1}$. Clearly the operator $\mathrm{Diff} $ can be seen as a Turing functional. 
If $G$ is a maximal    tower then $D=\mathrm{Diff}(G) $ is MAD. For, if $R$ is infinite recursive then $R-G_n$ is infinite for some $n$,  and hence $R\cap D_i$ is infinite for some $i<n$.

For $\+ T \le_S \+ A$, given a set $F$ let $G =\mathrm{Cp}(F) $ be the set   such that  \[ x \in G_n \lra \fa i < n \, [ x \not \in F_n].\]
Again $\mathrm {Cp}$ is a Turing functional. If $F$ is AD then $G$ is a    tower, and if $F$ is MAD then $G$ is a maximal    tower.
\end{proof}

Recall that a characteristic index for a set $M$ is an $e$ such that $\chi_M= \phi_e$.

\begin{prop}  \label{pr:halt} Suppose $F \in \+ A$. While each $F_n$ is computable, $\Halt $ is not able to compute, from input  $n$,  a characteristic index for $F_n$. 
\end{prop}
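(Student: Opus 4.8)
The plan is to argue by contradiction. Suppose $\Halt$ computes, uniformly in $n$, a characteristic index for $F_n$; i.e.\ there is a total $g \leT \Halt$ with $\phi_{g(n)} = \chi_{F_n}$ for every $n$. I will build from $g$ an infinite \emph{recursive} set $R$ meeting every $F_n$ in a finite set; this contradicts the defining property of $\+ A$, that every infinite recursive set $R$ has $R \cap F_n$ infinite for some $n$. The set $R$ will be the range of a strictly increasing recursive function $k \mapsto r_k$, built in the spirit of Fact~\ref{fa:silly}: $r_k$ should avoid $F_0,\dots,F_{k-1}$, so that $R \cap F_n$ is contained in an initial segment of the $r_k$'s. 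The obstacle is that we have lost uniform recursive membership tests for the $F_i$ and must work with the recursive approximation $(s,n)\mapsto g_s(n)$, $\lim_s g_s(n)=g(n)$, furnished by the Limit Lemma; write $\sigma(n)$ for its (non-recursive) modulus at $n$.

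To define $r_k$ given $r_{k-1}$, I would run a bounded $\mu$-search: find the least $s \ge k$ admitting some $r$ with $r_{k-1} < r \le s$, $\phi_{g_s(k),s}(r)\!\downarrow = 1$, and $\phi_{g_s(i),s}(r)\!\downarrow = 0$ for all $i<k$; then let $r_k$ be the least such $r$ and $\hat s_k$ the witnessing stage $s$. All computations are run for at most $s$ steps, so each stage is decidable and the search is a genuine recursive process. The clause $\phi_{g_s(k),s}(r)=1$ (``$r$ looks like a member of $F_k$'') is included only to guarantee that the search succeeds; the clauses $\phi_{g_s(i),s}(r)=0$ for $i<k$ (``$r$ looks like it avoids $F_0,\dots,F_{k-1}$'') are what will eventually force almost disjointness.

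The crux is showing this $\mu$-search always halts, so that $k\mapsto r_k$ is total recursive and hence $R$ is recursive. Here I would invoke the almost-disjointness of $\+ C_F$: since $F_k$ is infinite and $F_k\cap F_i$ is finite for each $i<k$, the set $F_k \setminus \bigcup_{i<k}F_i$ is infinite, so fix $r^* > r_{k-1}$ in it. Because every $\phi_{g(i)}$ is \emph{total}, the computations $\phi_{g(i)}(r^*)$ for $i\le k$ all halt, and because $g_s(i)=g(i)$ for $s\ge\sigma(i)$, at every sufficiently large $s$ (larger than $r^*$, than these halting times, and than $\sigma(0),\dots,\sigma(k)$) the number $r^*$ already witnesses the search condition. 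So the search terminates; its halting stage $\hat s_k$ provably exists, even though no recursive bound on it is available. Since $\hat s_k \ge k$ and $r_k > r_{k-1}$, the $r_k$ strictly increase, so $R$ is infinite.

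It remains to verify $R \cap F_n$ is finite for each $n$. If $k > \max(n,\sigma(n))$, then $\hat s_k \ge k > \sigma(n)$, so $g_{\hat s_k}(n)=g(n)$ is the true characteristic index, and the clause for $i=n$ in the definition of $r_k$ reads $\phi_{g(n)}(r_k)=0$, i.e.\ $r_k\notin F_n$. Hence $R\cap F_n \subseteq \{\,r_j : j\le \max(n,\sigma(n))\,\}$, a finite set. (Note we never used that $r_k$ genuinely lies in $F_k$: at small stages the clause $\phi_{g_s(k),s}(r_k)=1$ may be testing against a wrong index, but this does no harm.) Thus $R$ is an infinite recursive set almost disjoint from every $F_n$, contradicting $F\in\+ A$, which proves the proposition.
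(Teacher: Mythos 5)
Your proof is correct. It differs from the paper's in how the work is packaged: the paper takes the limit approximation $f(n,s)$ of the characteristic indices and uses the same ``wait for the approximated computation to converge'' trick to manufacture a \emph{computable} set $\hat F$ with $\hat F_n =^* F_n$ for all $n$ (define $\hat F_n(x)$ from the value of $\phi_{f(n,s),s}(x)$ at the least $s>x$ where it halts), and then simply cites Fact~\ref{fa:silly} to conclude that $\hat F$, being computable and MAD, is impossible. You instead inline the diagonalization of Fact~\ref{fa:silly} into the approximation argument, building the infinite recursive set $R$ almost disjoint from every $F_n$ directly, with termination of each search guaranteed by the infinitude of $F_k \setminus \bigcup_{i<k} F_i$ and the settling of the approximation, and almost disjointness guaranteed once $k$ exceeds $\max(n,\sigma(n))$. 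The paper's route is shorter and more modular, since it reuses the earlier fact and hides the stage bookkeeping inside the almost-equality $\hat F_n =^* F_n$; your route is self-contained and makes explicit exactly where the modulus of convergence enters, at the cost of carrying the stage-by-stage verification yourself. Both hinge on the same two ingredients: the Limit Lemma and the totality of the true characteristic functions, which forces every search to terminate even though no recursive bound on the halting stage exists.
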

\begin{proof} Assume the contrary. Then there is a computable function $f$ such that  $\phi_{\lim_s f(n,s)}$ is the characteristic function of $F_n$. 

Let $\hat F$ be defined as follows. Given $n, x$ compute the least $s> x$ such that $\phi_{f(n,s),s}(x) \DA$. If the value is not $0$ put $x$ into $\hat F_n$. 

Clearly $\hat F$ is computable and $F_n = ^* \hat F_n$ for each $n$. So $\hat F$ is MAD, contrary to Fact~\ref{fa:silly}. \end{proof}

 This research started  at the CMO by looking at   {an analog of $\frb \le \fra$.}  This isn't quite the right approach as we will see. At least,  it is misleading. 
 
 The analog of $\frb$ is the  property of an oracle $C$ to be  \emph{high}, namely  $\ES'' \le_T C'$; for detail  on this see~\cite{Rupprecht:10,Brendle.Brooke.ea:14}.  (To see this as a mass problem,  take the functions eventually dominating all computable functions,   instead.) We want  to   show that an analog of the relationship $\frb \le \fra$  holds: every high oracle computes a set in $\+ A$.  The inversion  that occurs here is part of the generally accepted setup of analogy  between the two areas of logic.
One can show that each high oracle  $C$  computes a set  in $\+ A$.  The proof is a straightforward application of a dominating function computed by $C$. 

Lempp, Miller, Nies and Soskova (in prep.) have strengthened the result:  each non-low oracle computes a set  in $\+ T$, and hence in $\+ A$.   The result is uniform in the sense of mass-problems. Let NonLow denote the class of oracles $X$ such that $X' \not \le_T \ES'$. 

\begin{thm} \label{thm: NonLow} $\+ T \le_S \mathrm{NonLow}$.  \end{thm}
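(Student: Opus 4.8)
The statement asks for a single Turing functional that, from any non-low oracle $X$, produces a set $G$ with $\+ C_G$ a maximal tower at the recursive sets; by Fact~\ref{fa:ATA} it is equivalent to produce a MAD set instead, but I would work directly with towers. The plan is to build the columns $G_0 \supseteq^* G_1 \supseteq^* \cdots$ so that, besides the tower conditions ($G_n$ infinite, $G_n\setminus G_{n+1}$ infinite), for every $e$ the requirement $\+ Q_e$ holds: if $R_e=\{x:\phi_e(x)\downarrow=1\}$ is total and infinite then $R_e\setminus G_m$ is infinite for some level $m=m_e$. A single level $m$ is used to meet a single $\+ Q_e$ by ``branching $G_m$ off $R_e$'': inside level $m$ one runs a finite-injury construction that produces $G_{m+1}\sub G_m$ with $G_{m+1}\cap R_e$ finite (driving $R_e\setminus G_{m+1}$ to be infinite when $R_e$ is infinite), while keeping $G_m\setminus G_{m+1}$ and $G_{m+1}$ both infinite and --- crucially --- preserving the invariant below.

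The invariant I would carry is: writing $U_n$ for the finite union of the $R_e$ branched off by level $n$, every infinite recursive $T$ with $T\setminus U_n$ infinite has $T\cap G_n$ infinite. This is what makes a branching step legal and makes the construction go through. It propagates: if $T\setminus U_{m+1}=T\setminus(U_m\cup R_e)$ is infinite then $(T\cap\overline{R_e})\setminus U_m$ is infinite, so $T\cap\overline{R_e}\cap G_m$ is infinite by the invariant at $m$, and the finite-injury construction at level $m$ keeps infinitely much of this set in $G_{m+1}$ while still thinning $G_m$ enough that $G_m\setminus G_{m+1}$ is infinite. Taking $T=\overline{R_e}$ shows that when $\overline{R_e}\setminus U_m$ is infinite the branch keeps $G_{m+1}$ infinite. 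The one bad case, $\overline{R_e}\setminus U_m$ finite, never actually needs a branch: then the levels past $\max_i m_{e_i}$ are already $\sub^* \overline{U_m}\sub^* R_e$, and a strictly decreasing tower sitting inside the recursive set $R_e$ escapes $R_e$ on its own.

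The real obstacle, and the only place non-lowness is used, is \emph{timing}. To keep $G_{m+1}\leT X$, the finite-injury construction at level $m$ must decide membership in $G_{m+1}$ by recursive deadlines, so it needs a bound on how long to wait for values of $\phi_e$; a high oracle would just supply a dominating function, but a non-low oracle does not. Also, whether a given $R_e$ ever needs branching (rather than being escaped for free) is a $\Pi^0_2(X)$ question $X$ cannot settle. The escape is that $\+ Q_e$ only has to succeed \emph{eventually}, so it is enough to guess the deadlines and the level $m_e$ correctly infinitely often; from the characterisation of $\mathrm{NonLow}$ underlying the computability analog of $\mathfrak b$ (Rupprecht~\cite{Rupprecht:10}, Brendle et al.~\cite{Brendle.Brooke.ea:14}) I would extract an $X$-computable ``escaping'' function that is infinitely often correct against every $\Delta^0_2$ prediction, and let it drive the deadlines and reassign $m_e$ each time the guess is revised. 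Standard bookkeeping then shows every $\+ Q_e$ is met and that everything is uniform in $X$. I expect the hardest part to be the priority interaction between reassignments for different $e$ and the thinnings enforcing strict decrease --- arranging that no level is flooded and that the invariant survives all injuries --- together with pinning down exactly which combinatorial consequence of non-lowness makes the infinitely-often guessing sufficient.
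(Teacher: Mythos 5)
There is a genuine gap, and in fact the paper's proof takes a completely different and much shorter route that sidesteps every difficulty you flag. Your plan is a diagonalization: meet, for each total $\phi_e$ with infinite range $R_e$, a requirement $\+ Q_e$ asserting that some \emph{fixed} level $m$ has $R_e\setminus G_m$ infinite. Two things go wrong. First, the engine is missing: you defer to ``the combinatorial consequence of non-lowness that makes infinitely-often guessing sufficient,'' but no such consequence is identified, and the characterisations you cite from Rupprecht and Brendle et al.\ attach infinitely-often-equality and escaping properties to \emph{other} highness classes (weak meagre/null engulfing, analogs of $\cov$ and $\non$), not to non-lowness; non-lowness by itself gives you neither a dominating function nor an $X$-computable function i.o.\ equal to every $\DII$ function. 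Second, even granting some i.o.-correct guessing device, your scheme reassigns $m_e$ each time a guess is revised; if revisions happen infinitely often the witness level drifts upward, each individual level $G_m$ loses only finitely many elements of $R_e$, and the $\Sigma^0_3$ requirement ``$(\exists m)\,[R_e\setminus G_m$ infinite$]$'' is never met. So as written the argument does not close.

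The paper's proof needs none of this machinery. Identifying a string $x$ with the number $1x$, it sets $\Phi^Z=G$ with
\[G_n=\{x \colon |x|=s\ge n \ \land\ Z'_s\uhr n = x\uhr n\},\]
i.e.\ it codes the stage-by-stage approximation to $Z'$ directly into the tower. Each $G_n$ is computable (since $Z'_s\uhr n$ settles), the tower conditions are immediate, and the punchline is that \emph{any} infinite $R$ with $R\sub^* G_n$ for all $n$ satisfies $Z'(k)=\lim_{x\in R,\,|x|>k}x(k)$, hence $Z'\leT R'$; a computable such $R$ would make $Z$ low. There is no per-requirement combat with individual recursive sets, no timing problem, and non-lowness is used exactly once, as a pure non-computability statement about the jump. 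If you want to salvage a diagonalization-style proof you would first need to isolate a genuine combinatorial reformulation of non-lowness strong enough to drive it, which is precisely what the coding trick renders unnecessary.
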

\begin{proof}
In the following $x,y,z$ denote binary strings; we identify $x$    with the number $1x$  via the binary expansion. Define a Turing functional $\Phi$ for the Medvedev reduction, as follows. $\Phi^Z = G$, where for each $n$ 
 \[G_n= \{x \colon \, |x| = s \ge  n \land Z'_s \uhr n=  x \uhr n\}.\]
It is clear that for each $n$ we have  $G_{n+1} \sub^* G_n$ and $G_n - G_{n+1}$ is infinite. Also, for each $n$, for large $s$ the string $Z'_s \uhr n$ settles, so $G_n$ is computable.

Suppose now that $R$ is an infinite set such that $R \sub^* G_n$ for each $n$. Then $Z'(k)= \lim_{x \in G, |x| > k } x(k)$, and hence $Z'\le_T R'$. So if $Z \in \mathrm{NonLow}$ then $R$ cannot be computable, and hence $\Phi^Z \in \+ T$. 
\end{proof}

\subsection{$\Delta^0_2$ and $1$-generic sets, totally low sets, and not computing a MAD}

We provide   a type of sets which doesn't compute a MAD set. Joe Miller and Mariya Soskova showed (during the weekend after Andre had left) that if $L$ is $\Delta^0_2$ and $1$-generic,  then  $L$ does not compute a MAD family. Downey's visit in Madison before the Lempp `60 conference brought another interesting lowness property into the game: totally low oracles. This property, derived from Joe's and Mariya's proof, is in between    $1$-generic $\Delta^0_2$, and not computing a MAD. No separation is known at present.

Note that for a $\DII$  oracle $L$, $\emptyset''$ can compute from $e$ an index for $\Phi_e^L$, provided that it's computable. To be totally low means that $\Halt $ suffices.
\begin{definition} We call an oracle $L$ \emph{totally low} if $\Halt$  can compute from $e$ an  index for $\Phi_e^L $ whenever $\Phi_e^L$ is computable. In other words, there is a functional $\Psi$  a such that 
 \bc $\Phi_e^L$ computable $\RA$ $\Psi(\Halt;e)$ is an index for it. \ec \end{definition}
 \n No assumption is made on the  convergence  of $\Psi(\Halt;e)$ in case $\Phi_e^L$ is not a computable function. 
 A total function is computable iff its   graph is.  A moment's thought using this fact   shows   that we may restrict ourselves to $e$ such that $\Phi_e^L$ is $\{0,1\}$-valued.  Clearly, being totally low is closed downward under $\le_T$.
 
  By  Prop~\ref{pr:halt}, a totally low oracle $D$ does not compute a MAD set.  In particular by Theorem~\ref{thm: NonLow}, $D$ is low  (as the name suggests)
\begin{thm}
Suppose $L$ is $\Delta^0_2$ and $1$-generic. Then  $L$ is totally low.
\end{thm}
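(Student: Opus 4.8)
The plan is to let $\Halt$ recover $L$ --- which it can, since $L$ is $\Delta^0_2$ --- and then, given $e$, search along $L$ for an initial segment $\nu$ that ``forces $\Phi_e$ to decide its values''. The point will be that such a $\nu$ exists exactly when $\Phi_e^L$ is computable, and that the relevant property of $\nu$ is $\Pi^0_1$, hence decidable by $\Halt$. By the remark preceding the theorem we may assume throughout that $\Phi_e^L$ is a total $\{0,1\}$-valued function; for other $e$ the value of $\Psi$ is unconstrained.

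Call a string $\nu$ \emph{$e$-deciding} if for all $n$ and all $\tau_1,\tau_2\succeq\nu$ with $\Phi_e^{\tau_1}(n)\downarrow$ and $\Phi_e^{\tau_2}(n)\downarrow$ one has $\Phi_e^{\tau_1}(n)=\Phi_e^{\tau_2}(n)$. This is a $\Pi^0_1$ property of $\nu$, uniformly in $e$. If $\nu\prec L$ is $e$-deciding and $\Phi_e^L$ is total, then the partial computable function $\psi_\nu(n):=$ ``$\Phi_e^{\tau}(n)$ for the first $\tau\succeq\nu$ found with $\Phi_e^{\tau}(n)\downarrow$'' is actually total (convergence of $\Phi_e^L(n)$ forces some long enough $L\upharpoonright k\succeq\nu$ to converge on $n$) and equals $\Phi_e^L$ (such an $L\upharpoonright k$ is among the admissible $\tau$, and $\nu$ is $e$-deciding); an index for $\psi_\nu$ is computable uniformly from $\nu$ and $e$. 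So I would define $\Psi$ by: on input $e$, compute $L\upharpoonright\ell$ for $\ell=0,1,\dots$ using $\Halt$, test the $\Pi^0_1$ predicate ``$L\upharpoonright\ell$ is $e$-deciding'' (noting that $e$-decidedness is preserved under extension, so the search will succeed once $\ell$ is large enough), and on success output an index for $\psi_{L\upharpoonright\ell}$. When $\Phi_e^L$ is a $\{0,1\}$-valued computable function this index is correct; when $\Phi_e^L$ is not computable there is no $e$-deciding initial segment of $L$ (an $e$-deciding $\nu\prec L$ would make $\psi_\nu=\Phi_e^L$ computable), so $\Psi(\Halt;e)$ diverges, which is permitted.

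The step that actually uses $1$-genericity, and the crux of the proof, is the existence of an $e$-deciding $\nu\prec L$ whenever $\Phi_e^L$ is total computable. Here I would set $f=\Phi_e^L$ and consider the c.e.\ set of strings $S=\{\tau:\exists n\,[\,\Phi_e^{\tau}(n)\downarrow\ \wedge\ \Phi_e^{\tau}(n)\ne f(n)\,]\}$. No initial segment of $L$ lies in $S$: by the usual use/monotonicity convention, $L\upharpoonright m\in S$ would give $\Phi_e^L(n)=\Phi_e^{L\upharpoonright m}(n)\ne f(n)=\Phi_e^L(n)$. Hence by $1$-genericity some $\sigma\prec L$ has no extension in $S$, and any such $\sigma$ is $e$-deciding (all convergent values below it equal $f$, so they agree with each other).

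The remaining things to check are routine: that $S$ really is c.e.\ (using that $f$ is computable and that $\Phi_e^{\tau}(n)\downarrow$ is a $\Sigma^0_1$ fact about $\tau,n$), that $e$-decidedness is genuinely $\Pi^0_1$ uniformly in $e,\nu$ so that $\Halt$ can decide it, and --- the one place one should be slightly careful --- that $e$-decidedness of $\nu\prec L$ together with totality of $\Phi_e^L$ forces $\psi_\nu$ to be total, i.e.\ that one does not also need to impose a separate ``$\Phi_e$ is total below $\nu$'' clause; this is automatic because $\Phi_e^L(n)\downarrow$ already supplies a converging extension of $\nu$.
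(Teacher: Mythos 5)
Your proof is correct and is essentially the paper's argument: the ``$e$-deciding'' initial segments are exactly the strings above which $\Phi_e$ has no splittings, your c.e.\ set $S$ is the paper's set $C_e$ of strings computing a disagreement with $F$, and both proofs apply $1$-genericity to it to obtain a non-splitting $\nu\prec L$, then use that this condition is $\Pi^0_1$ and $L\leT\Halt$ so that $\Halt$ can find $\nu$ and output the index of the ``first convergent extension'' procedure. The only difference is presentational (you apply genericity directly to $S$, while the paper argues by contradiction through density of the splitting set along $L$), so there is nothing further to add.
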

\begin{proof}
Suppose   $F = \Phi_e^L$ and $F$ is a computable set. 
Let \[S_e = \set{\sigma}{ (\exists \tau_1\succeq\sigma)(\exists \tau_2\succeq \sigma)(\exists p)[\Phi_e^{\tau_1}(p)\neq \Phi_e^{\tau_2}(p)]}.\]
 Suppose that $S_e$ is dense along $L$.  We claim that  the set
  \[C_e = \set{\tau}{ (\exists p)[\Phi_e^{\tau}(p)\neq F(p)]}\]
   is also dense along $L$: i.e. for every $k$ there is some $\tau\succeq L\uh k$ such that $\tau\in C_e$. Indeed, let $\sigma\succeq L\uh k$ be a member of $S_e$ and let $\tau_1$ and $\tau_2$ and $p$ witness that. Let $\tau_i$ where $i = 1$ or $2$ be such that $ \Phi_e^{\tau_i}(p)\neq F(p)$. Then $\tau_i\succeq L\uh k$ is in $C_e$. The set $C_e$ is c.e.\ and hence $L$ meets $C_e$, contradicting our assumption that $F = \Phi_e^L$. 
 
 It follows that $S_e$ is not dense along $L$.  In other words,  there is some least $k_e$ such that  there is no splitting of $\Phi_e$ above $L\uh k$. On input ~$e$ the oracle $\emptyset'$ can compute $k_e$ and $L\uh k_e$. This allows $\emptyset '$ to find an index for $F$, given by the following  procedure. To compute $F(p)$, find the least $\tau\succeq L\uh k_e$ such that $\Phi_e^{\tau}(p)\downarrow$ (in $|\tau|$ many steps). Such a $\tau$ exists because $\Phi_e^L(p)\downarrow$. By our choice of $k_e$ it follows that $\Phi_e^{\tau}(p) = \Phi_e^L(p) = F(m)$. 
\end{proof}
\n 
By EX one denotes the class of computable  functions $f$ so that an index can  be learned in the limit: there is a machine $M$ taking values $f(0), f(1), \ldots $ such that $\lim_s M(f(0), \ldots, f(s)$ exists and is an index for $M$. To relativize  this to oracle $X$, allow $M$ access to $X$ (but the functions are still computable).
Recall that $A$ is low for EX learning if $EX^A= EX$. Slaman and Solovoy \cite{Slaman.Solovay:91} showed that \bc $A$ is low for EX-learning $\LR$ $A$ is $\DII $ and 1-generic. \ec 

We summarize the  known implications of lowness notions.  

\bc   $\DII $ and 1-generic $\RA$ Totally low $\RA$  
 computes no MAD $\RA$ low. \ec

The last arrow doesn't reverse by what follows; the others might.

\subsection{C.e.\ MAD sets}
It may come as a surprise that a set in $\+ A$ can be c.e., and in fact can be  built by  a priority construction with finitary requirements, akin to Post's construction of a simple set.  Such a  construction  is compatible with permitting.  The result to follow  is part of the joint work  mentioned above.
\begin{thm} For each incomputable c.e.\ set $A$, there is a MAD c.e.\ set $R \le_T A$. \end{thm}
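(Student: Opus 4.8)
The plan is to build a c.e. set $R$ whose columns $R_0, R_1, \dots$ form the requisite MAD family, diagonalizing against all potential infinite recursive sets while respecting $A$-permitting. First I would list the requirements. For maximality we need, for each $e$, a requirement $\+ M_e$: if $W_e$ (viewed as a candidate infinite recursive set via its characteristic-function index, or more simply: if $\phi_e$ is total and $\{x : \phi_e(x) = 1\}$ is infinite) then some column $R_n$ meets it infinitely. For almost-disjointness we need $\+ P_{n,k}$: $R_n \cap R_k$ is finite for $n \neq k$. The trick that makes finite requirements possible is the same as in Post's simple-set construction: rather than trying to make $R_n \cap W_e$ infinite for a pre-assigned $n$, we devote a fresh column $R_{n(e)}$ to each $e$ and try to make $R_{n(e)} \subseteq^* W_e$ (more precisely $R_{n(e)}$ an infinite subset of $\{x : \phi_e(x)=1\}$). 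Then automatic almost-disjointness is arranged by having each element enter at most one column, and by putting into $R_{n(e)}$ only numbers larger than any number yet mentioned by higher-priority columns — so $R_{n(e)} \cap R_m$ is finite for every $m$ of higher priority, and by symmetry all pairwise intersections are finite.

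Next I would describe the module for a single $\+ M_e$ with permitting. The column $R_{n(e)}$ acts as follows: it maintains a current "target" threshold $t$; it searches for a number $x > t$ with $x$ large relative to all higher-priority activity and with $\phi_{e,s}(x)\!\downarrow = 1$; when such an $x$ is found at stage $s$, it asks for permission, i.e. waits for $A$ to change below $x$ (equivalently, $A_s \!\upharpoonright x \neq A_{s+1}\!\upharpoonright x$ or some standard permitting clause) before enshrining $x \in R_{n(e)}$; once $x$ is enumerated it raises $t$ above $x$ and the restraint, and repeats. If $\phi_e$ is total and its $1$-set is infinite, then there are infinitely many candidate $x$'s arbitrarily far out; since $A$ is incomputable, the permitting will succeed infinitely often (the standard argument: if from some point on permission were always denied, one could compute $A$ below the relevant bound), so $R_{n(e)}$ becomes an infinite subset of that $1$-set, and hence meets it infinitely — $\+ M_e$ is satisfied. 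If $\phi_e$ is not total or its $1$-set is finite, the column may stall, but then it contributes only finitely much and harms nothing. A column that stalls should not permanently freeze lower-priority columns; one handles this with the usual convention that restraints are only imposed while actively waiting for a specific permission, and are dropped once that element enters — so at every stage only finitely many columns hold restraint and the restraints are eventually bounded for each level.

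The verification proceeds requirement by requirement by induction on priority. One shows each column eventually settles its restraint pattern so that lower-priority columns are free to act cofinally; that $R \le_T A$ because the entire enumeration is driven by $A$-permitting (the use bound for deciding "did $x$ ever enter $R_n$?" is computable from $A$ via the permitting bookkeeping); that each $R_n$ is infinite and computable (it is c.e., and we can decide membership of $x$ by running the construction to the first stage past which no number $\le x$ can enter $R_n$, which exists because each column enumerates at most one new element per "round" and rounds are spaced out by the rising thresholds); that $R_n \cap R_k$ is finite by the threshold discipline; and that every infinite recursive set $R$, being $\{x : \phi_e(x)=1\}$ for some total $\phi_e$ with infinite $1$-set, meets $R_{n(e)}$ infinitely, giving MADness at the recursive sets. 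The main obstacle I expect is the interaction between permitting and the finitary restraint: one must arrange that a column which is waiting (possibly forever) on a permission that never comes does not block the MAD-column for some other $e$ from acting infinitely often; this is exactly where the "fresh column per $e$" design pays off, since columns do not compete for the same witnesses, and a careful but standard restraint-release protocol ensures the tail of every column is eventually unobstructed. This is precisely the kind of finite-injury-with-permitting bookkeeping that the text signals ("akin to Post's construction of a simple set" and "compatible with permitting"), so I would write the $S_e$-style splitting-free module above and then discharge the bookkeeping in the verification.
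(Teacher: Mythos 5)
Your overall strategy (finitary Post-style requirements driven by $A$-permitting) is the same in spirit as the paper's, but two steps in your design would fail as written. First, in this setting a MAD code must have \emph{every} column infinite: the definition of the mass problem $\+ A$ requires each $F_n$ to be infinite, not just almost disjoint. In your design the column $R_{n(e)}$ devoted to $\phi_e$ simply ``stalls'' when $\phi_e$ is partial or has finite $1$-set, so infinitely many of your columns are finite (indeed empty), and your assertion in the verification that ``each $R_n$ is infinite'' has no supporting argument. The paper handles this by interleaving auxiliary targets $V_{2e+1}=\NN$ (for which candidates never run out, so permitting makes those columns infinite) and setting $R^{[e]}=S^{[2e]}\cup S^{[2e+1]}$; some such padding device is needed and is absent from your write-up.

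Second, your maximality argument claims that if $W=\{x:\phi_e(x)=1\}$ is infinite recursive then the devoted column meets it infinitely, but this does not follow under your strict disjointness discipline (``each element enters at most one column''). Other columns may, over time, absorb all but finitely many elements of $W$, starving the devoted column of candidates; permitting then yields nothing, and since the absorbing columns may be infinitely many, each meeting $W$ only finitely, you cannot conclude that $W$ meets \emph{some} column infinitely. The paper avoids this by only requiring a candidate for $S_e$ (under requirement $P_n$, $n=\langle e,k\rangle$) to avoid the finitely many columns $S_i$, $i<n$, accepting bounded overlaps (verified separately via $|S_e\cap S_m|\le m$ for $e<m$); then the hypothesis ``$V_e-\bigcup_{i<n}S_i$ infinite'' yields a clean dichotomy: either candidates persist and permitting succeeds, or $V_e\sub^*\bigcup_{i<n}S_i$, a finite union, so $V_e$ already meets some $S_i$ infinitely. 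Your sketch is missing this dichotomy, and it is not clear it can be recovered without abandoning strict disjointness. Relatedly, your module ``waits for $A$ to change below $x$'' for a single appointed witness; a fixed $x$ may never be permitted even though $A$ is incomputable, so the construction must, as in the paper, check at each stage whether the current $A$-change permits \emph{some} currently available candidate rather than committing to one witness.
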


\begin{proof} Let $\seq {V_e} \sN e$ be a u.c.e.\ sequence of sets such that $V_{2e} = W_e$ and $V_{2e+1} = \NN$ for each $e$. We will build an auxiliary   c.e.\ set $S \le_T A$ and let  the c.e.\ set $R\le_T A $ be defined   by $R^{[e]} = S^{[2e]} \cup S^{[2e+1]}$. The role of the  $V_{2e+1}$ is to make the sets $S^{[2e+1]}$, and hence the  $R^{[e]} $ infinite. The construction also ensures that $S$, and hence $R$, is AD, and that $\bigcup_n S^{[n]}$ is coinfinite. 

We will write $S_e$ for $S^{[e]}$.
We provide a stage-by-stage construction to meet   the requirements $P_n$,  $  n= \langle e, k \rangle$ given by

$$P_n  \colon V_e - \bigcup_{i<n} S_i \text { infinite } \Rightarrow |S_e \cap V_e | \ge k.$$  
At stage $s$ we  say that $P_n$ is satisfied  if $|S_{e,s} \cap V_{e,s} | \ge k$. 

\noindent \emph{Construction.} 

\noindent \emph{Stage $s>0$}. For each $n< s$ such that $P_n$ is not satisfied, $  n= \langle e, k \rangle$,  if there is $x \in V_{e,s} -  \bigcup_{i<n} S_{i,s}$ such that    $x >  \max (S_{e,s-1})$, $x \ge 2n$  and $A_s \uhr   x \neq A_{s-1} \uhr x$, then put   $\langle x, e \rangle$ into $S$ (i.e., put $x $ into $S_e$). 

\noindent \emph{Verification.}
Each $S_e$ is computable,  being enumerated in an increasing fashion. 

Each $P_n$ is active at most once.  This ensures that $\bigcup_e S_e$ is coinfinite: for each $N$, if $x< 2N$ enters this union then   this is due to the action of a  requirement $P_n$ with $n \le N$, so there are at most $N$ many such $x$. 

To see that  a requirement $P_n$, $  n= \langle e, k \rangle$,  is met, suppose that its hypothesis holds. Then there are potentially infinitely many candidates $x$ that can go into $S_e$. Since $A$  is incomputable, one of them will be permitted.

 Now, by the choice of $V_{2e+1}$, each $S_{2e+1}$, and hence each $R_e$ is infinite.
By construction, for $e< m$ we have $|S_e \cap S_m| \le m$. So the family described by $S$ and therefore also the one described by  $R$ is almost disjoint.

To show $R$ is MAD, it suffices to verify  that 
if $V_e$ is infinite then $V_e \cap R_p$ is infinite for some $p$. If all the $P_{e,k}$ are satisfied during the construction then we let $p=e$. Otherwise let $k$ be least such that $P_{n}$ is never  satisfied where $n= \langle e,k \rangle$. Then its hypothesis fails, so  $V_e \sub^* \bigcup_{i<n} S_i $. 
\end{proof}
By the argument for the reduction  $ \+ T \le_S \+ A$  in Fact~\ref{fa:ATA}, we obtain 
\begin{cor} For each incomputable c.e.\ set $A$, there is a co-c.e.\ set $G \le_T A$ such that $G$ describes a maximal tower, i.e. $G \in \+ T$. \end{cor}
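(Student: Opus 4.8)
The corollary follows from applying the Turing functional $\mathrm{Cp}$ from the proof of Fact~\ref{fa:ATA} to the c.e.\ MAD set $R$ just constructed. Recall that $\mathrm{Cp}(F)$ is the set $G$ with $x \in G_n \lra \fa i < n\, [x \notin F_i]$; we showed there that if $F$ is MAD then $G = \mathrm{Cp}(F)$ is a maximal tower, so $G \in \+ T$. The plan is simply to verify that when $F = R$ is c.e., the resulting $G$ is co-c.e., and that $G \le_T A$.

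First I would note that $G \le_T A$: since $R \le_T A$ and $\mathrm{Cp}$ is a Turing functional, $G = \mathrm{Cp}(R) \le_T R \le_T A$. For the co-c.e.\ claim, observe that $x \in G_n$ iff $x \notin R_i$ for all $i < n$, i.e. $\la x, n \ra \in G$ iff $\la x, i\ra \notin R$ for all $i < n$. Since $R$ is c.e., the complement condition "$\la x, i \ra \notin R$" is co-c.e., and a finite conjunction of co-c.e.\ conditions is co-c.e.; hence $G$ is co-c.e. Finally, by Fact~\ref{fa:ATA} applied with $F = R \in \+ A$, we get $G \in \+ T$.

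There is really no obstacle here; the only point requiring a moment's care is that the reduction $\+ T \le_S \+ A$ in Fact~\ref{fa:ATA} was stated at the level of mass problems / Medvedev reductions, so one should simply record that the same operator $\mathrm{Cp}$, applied to the \emph{particular} set $R$ built in the theorem, yields a set that is simultaneously in $\+ T$, co-c.e., and Turing below $A$. I would phrase the proof in one or two sentences along exactly these lines.

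\begin{proof}
Let $R \le_T A$ be the c.e.\ MAD set constructed in the theorem, and set $G = \mathrm{Cp}(R)$, where $\mathrm{Cp}$ is the Turing functional from the proof of Fact~\ref{fa:ATA}, so that $\la x, n\ra \in G \lra \fa i < n\, [\la x, i \ra \notin R]$. Since $R$ is MAD, that proof shows $G$ is a maximal tower, i.e.\ $G \in \+ T$. As $\mathrm{Cp}$ is a Turing functional, $G \le_T R \le_T A$. Finally, $\la x, n \ra \in G$ is a finite conjunction of the co-c.e.\ conditions $\la x, i \ra \notin R$ for $i < n$, so $G$ is co-c.e.
\end{proof}
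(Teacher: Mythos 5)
Your proposal is correct and is exactly the paper's argument: the paper also obtains $G$ by applying the operator $\mathrm{Cp}$ from Fact~\ref{fa:ATA} to the c.e.\ MAD set built in the preceding theorem (the paper's ``$G=\mathrm{Cp}(A)$'' is a typo for the MAD set $R\le_T A$), and your verification that $G$ is co-c.e.\ and $G\le_T A$ just spells out what the paper leaves implicit.
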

Namely,   $G =\mathrm{Cp}(A) $ i.e., \[ x \in G_n \lra \fa i < n \, [ x \not \in A_n].\]

\begin{fact} \label{fact:diag} From a  co-c.e.\ tower $G$ one can uniformly compute an infinite   co-c.e.\ set $B $ such that $B \sub^* G_n$ for each $n$. Moreover, the index for $B$ is also obtained uniformly. \end{fact}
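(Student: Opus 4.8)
The plan is to build $\overline B$, the complement of $B$, as a c.e.\ set by a finite‑injury construction with movable markers. Fix a c.e.\ enumeration of $\overline G$; for each $n$ this gives a uniformly computable decreasing approximation $G_n[s]:=\NN\setminus(\overline G[s])^{[n]}$ to $G_n$, and I set $H_k[s]:=\bigcap_{i\le k}G_i[s]$, a cofinite computable set, decreasing in $s$, with $\bigcap_s H_k[s]=H_k:=\bigcap_{i\le k}G_i$. Since $H_0\supseteq H_1\supseteq\cdots$ and $H_n=^*G_n$ (because $G_n\subseteq^* G_i$ for $i\le n$), it suffices to produce an infinite co‑c.e.\ $B$ with $B\subseteq^* H_n$ for every $n$.

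At stage $s$ I would first define markers $b_0[s]<b_1[s]<\cdots$ by letting $b_k[s]$ be the least $x>b_{k-1}[s]$ with $x\in H_k[s]$ and $x\notin\overline B_{s-1}$; such an $x$ exists since $H_k[s]\supseteq H_k$ is infinite and $\overline B_{s-1}$ is finite, and because $H_k[s]$ shrinks while $\overline B_{s-1}$ grows, each $b_k[s]$ is automatically non‑decreasing in $s$. I would then enumerate into $\overline B$ every $x<s$ that is \emph{not} currently a marker but already lies in $\overline{G_n}[s]$ for some $n<s$. That is the whole construction, and $\overline B$ is plainly c.e.; uniformity is immediate. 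For the verification, assume for now that each marker converges, $b_k[s]\to b_k^{\mathrm{final}}$: then $b_k^{\mathrm{final}}\in H_k$ and $b_k^{\mathrm{final}}\notin\overline B$ (it is an eligible marker value from some stage on), so $B\supseteq\{b_k^{\mathrm{final}}:k\in\NN\}$ is infinite. Also $B\subseteq^* H_n$: if $x\in\overline{H_n}$ then $x\in\overline{G_i}$ for some $i\le n$, so from some stage on $x\in\overline{G_i}[s]$ and $x$ is put into $\overline B$ unless it is a marker at every such stage; in the latter case $x$ is a marker at all large stages, hence $x=b_j^{\mathrm{final}}$, and since $b_j^{\mathrm{final}}\in H_j$ while $x\notin H_i$ we must have $j<i\le n$. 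Thus $B\cap\overline{H_n}\subseteq\{b_0^{\mathrm{final}},\dots,b_{n-1}^{\mathrm{final}}\}$ is finite.

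The step I expect to be the main obstacle is the convergence of the markers. When $\bigcap_n G_n$ is infinite this is clean: no element of $\bigcap_n G_n$ ever enters $\overline B$ (it lies in no $\overline{G_n}$), so the least element of $\bigcap_n G_n$ above $b_{k-1}^{\mathrm{final}}$ is a permanent upper bound for the non‑decreasing sequence $b_k[s]$, which therefore stabilises. (In the relevant applications $\bigcap_n G_n$ is indeed infinite: e.g.\ for $G=\mathrm{Cp}(R)$ with $R$ the MAD c.e.\ set constructed above, $\bigcap_n G_n$ is the complement of the coinfinite set $\bigcup_n R^{[n]}$.) For a fully general co‑c.e.\ tower one has to work harder: a least‑counterexample argument shows that if $b_k$ moved infinitely often then every element of $H_k$ lying above $b_{k-1}^{\mathrm{final}}$ would eventually be enumerated into $\overline B$ — forcing $\overline B$ cofinite — and ruling this out is where the extra structure is used (the columns $G_n$ of a member of $\+ T$ being computable), by reserving for each level a small growing block of candidates instead of a single marker, so that enumeration into $\overline B$ is held back just long enough for one candidate per level to settle before its value can be prematurely discarded.

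Finally, the "moreover" clause of the next fact (uniformity of the index for $B$, and hence for the derived infinite co‑c.e.\ subset $B'$ of all the $G_n$) is just the observation that every step above — the approximation $G_n[s]$, the marker definitions, and the enumeration of $\overline B$ — is carried out effectively in the single given c.e.\ index for $\overline G$, so a c.e.\ index for $\overline B$ is obtained by a fixed algorithm.
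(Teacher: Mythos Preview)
Your construction is far more elaborate than the paper's, and the elaboration is where the gap lies. The paper's proof is two lines: replace each $G_n$ by $\bigcap_{i\le n}G_i$ so that the tower is literally nested, let $\gamma_{n,s}$ be the $n$-th element of the stage-$s$ approximation $G_{n,s}$, observe that $\gamma_{n,s}$ is non-decreasing in $s$ and strictly increasing in~$n$, and set $B=\{\gamma_n:n\in\omega\}$ where $\gamma_n=\lim_s\gamma_{n,s}$ is simply the $n$-th element of the (infinite, co-c.e.) set $G_n$. Convergence is immediate --- the $n$-th element of an infinite co-c.e.\ set exists --- and $B\setminus G_n\subseteq\{\gamma_0,\dots,\gamma_{n-1}\}$ is clear. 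There is no enumeration of $\overline B$ to manage and no marker to chase.

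Your markers $b_k[s]$ are defined recursively and interact with the enumeration of $\overline B$, and this feedback is exactly what you cannot control. You prove convergence only when $\bigcap_n G_n$ is infinite, but a co-c.e.\ tower need not have this property (take $G_n=\{2^nk:k\ge 1\}$). In the general case your sketch is not a proof: the assertion that divergence of $b_k$ ``forces $\overline B$ cofinite'' is not justified, and indeed the scenario you should worry about is real --- $b_k[s]$ can sit on a transient element of $H_k[s]\setminus H_k$ while the genuine least element $y\in H_k$ above $b_{k-1}^{\mathrm{final}}$, not being a marker and lying in some $\overline{G_{k+1}}[s]$, gets dumped into $\overline B$; when the transient element leaves, $b_k$ skips over $y$ and the pattern repeats. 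Your proposed fix (blocks of candidates, appeal to computability of columns) changes the construction and invokes a hypothesis (membership in $\mathcal T$) that the Fact does not assume. The paper's diagonal sidesteps all of this because $\gamma_n$ is defined directly from $G_n$ with no reference to what has been put into $\overline B$.
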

\begin{proof} We may assume that $G_n \supseteq G_{n+1}$ for each $n$. Let $\gamma_{n,s}$ be the $n$-th element of $G_{n,s}$. Clearly $\gamma_{n,s}$ is monotonic in $s$ and strictly monotonic in $n$. Let $\gamma_n = \lim_s \gamma_{n,s}$. The set  $B= \{\gamma_n \colon \, n \in \omega \}$ is as required. \end{proof}

Since a totally low set computes  no MAD set, we obtain:
\begin{cor} No incomputable,  c.e.\ set $L$ is   totally low. \end{cor}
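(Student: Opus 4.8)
The plan is simply to package two results already in hand: the Theorem that every incomputable c.e.\ set $A$ computes a MAD c.e.\ set $R\le_T A$, and the observation (recorded right after the definition of total lowness) that no totally low oracle computes a member of $\+ A$. Given an incomputable c.e.\ set $L$, applying the Theorem with $A=L$ produces $R\le_T L$ with $R\in\+ A$; hence $L$ computes a MAD set and therefore is not totally low.

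Since the step ``totally low $\Rightarrow$ computes no MAD set'' is the one with actual content, I would spell it out. Suppose $D$ is totally low and $F=\Phi_e^D\in\+ A$. For each $n$ the column $F_n=\{x : \langle x,n\rangle\in F\}$ is $\Phi_{e_n}^D$ for an index $e_n$ obtained uniformly (and computably) from $e$ and $n$, and since $F\in\+ A$ each $F_n$ is a computable set. By total lowness there is a functional $\Psi$ such that $\Psi(\Halt;e_n)$ is an index for the computable function $\Phi_{e_n}^D=\chi_{F_n}$; composing with the computable map $n\mapsto e_n$, this gives a single procedure by which $\Halt$ computes, from $n$, a characteristic index for $F_n$. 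That directly contradicts Proposition~\ref{pr:halt}. Combining, we get: if $L$ were totally low it would compute no set in $\+ A$, but $R\le_T L$ and $R\in\+ A$; contradiction.

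There is no real obstacle here, as this is a corollary assembled from earlier work; the only point requiring a moment's care is the uniformity in the displayed step — namely that an index for $F$ as a $D$-computable set yields, uniformly in $n$, indices for the columns as $D$-computable sets, so that one application of the total-lowness functional produces the $\Halt$-procedure outputting characteristic indices uniformly in $n$, matching exactly the negated statement of Proposition~\ref{pr:halt}.
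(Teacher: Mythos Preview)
Your proposal is correct and is precisely how the paper derives the corollary: the one-line argument combines the theorem that every incomputable c.e.\ set computes a MAD set with the earlier observation (via Proposition~\ref{pr:halt}) that a totally low oracle computes no MAD set, and your unpacking of the latter implication---using the uniform passage from an index for $F$ to indices for its columns---is exactly the intended reading. The paper also supplies a second, direct proof due to Downey and Nies that bypasses MAD sets: using the recursion theorem one builds functionals $\Gamma_e$ with $\Gamma_e^L$ computable, while a permitting argument against the putative $\Halt$-procedure $e\mapsto\Phi_e^{\Halt}(p(e))$ forces its output index to be wrong.
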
 

Downey and Nies  in October (during Nies' Wellington visit where Nies gave a seminar talk about this stuff) have given a direct proof of this. Assume for contradiction that $L$ is incomputable, c.e., and totally low. Define Turing functionals $\Gamma_e$, uniformly in $e$. By the recursion theorem, we  know in advance a computable function $p$ such  $\Gamma^X_e(n) = \Phi_{p(e)}^X(n)$ for each $X, n$. 

We meet  for each $e$ the requirement \bc  $P_e \colon \Gamma_e^L$ is computable but $\Phi_e^{\Halt}(p(e))$ is not an index for it. \ec
 Write $\alpha (e,s)$ for $\Phi_e^{\Halt}(p(e))[s]$.  
 
 \n {\it  Construction.}  Assign an increasing sequence of followers $x_e \in \omega^{[e]}$ to $P_e$. Initially we set $\Gamma_e^L(x_e)= 0$ with use $x_e$.  Once $\phi_{\alpha(e,s)}(x_e)=0$, we call $x_e$ \emph{certified} for $e$. If $L$ at a certain stage $t$ permits a certified $x_e$ that is  larger than any follower that is fulfilled, we change the value $\Gamma_e^L(x_e)$ to $1$, and call $x_e$ \emph{fulfilled}.

 \n {\it Verification.}  Since followers are enumerated in an  increasing fashion,  $\Gamma^L_e$ is computable. 
 There is a  final value $\lim_s\alpha(e,s)$, which hence is an index for $\Gamma^L_e$. After $\alpha(e,s)$ has stabilized to a value $v$, infinitely many followers get certified. So $L$ permits at a stage $t$ a certified  follower for $P_e$. This follower gets fulfilled  at  stage $t$, which causes the  index $v$ to become incorrect.  Contradiction.

\subsection{The mass problem  $\+ U$ corresponding to the ultrafilter  number,  and a strict reduction $ \+ T <_S \+ U$}

\begin{definition} Let  $\+ U$  be  the class of  sets $F $  such that $F$ is tower  as in Def.\ \ref{def:tower},  and $\+ C_F$ is an ultrafilter base within the recursive sets: for each   recursive set $R$ there is  $n$ such that  $F_n \sub^* \overline R$ or $F_n \sub^*  R$. \end{definition}
 Since $\+ U \sub \+ T$ we trivially have $\+ T  \le_S \+ U$  via the identity reduction.
Since our   properties of (codes for) classes are arithmetical, each of the  mass problems   introduced  contains an element computable from  $\ES^{(n)}$ for sufficiently large $n$. For instance,  $\ES''$ computes an oracle in  $ \+ U$. To see this,  take any $r$-cohesive set $C$. By definition of $r$-cohesiveness,   the recursive sets $R$ such that $  C \subseteq^* R$ is cofinite form an ultrafilter. If e.g.\ $\overline C$ is $r$-maximal, we can write  this ultrafilter as $\+ C_F$ for some $F \le_T \ES''$. 

For the next fact we  follow the upcoming work mentioned above due to Lempp, Miller, Nies and Soskova.  In the cardinal setting  we have $\mathfrak t \le \mathfrak u$, which suggests that each maximal tower should compute an ultrafilter base. But this is not true by the following.
 
\begin{prop} \label{prop:CDno} No ultrafilter base  $R$ is computably dominated. \end{prop}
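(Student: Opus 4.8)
\textbf{Proof plan for Proposition~\ref{prop:CDno}.}
The idea is to exploit the fact that an ultrafilter base within the recursive sets must ``decide'' every recursive set, and to feed it a carefully constructed recursive set that encodes a fast-growing function. Suppose $R$ codes an ultrafilter base $\+ C_R = \{R_n\}$ and suppose towards a contradiction that $R$ is computably dominated; I want to derive that $R$ computes a function not dominated by any computable function, which is impossible for a computably dominated oracle (indeed, a computably dominated oracle computes only functions dominated by computable ones). Since $\+ C_R$ is a tower, fix the associated increasing function $p$ with $p(n) \in \bigcap_{i \le n} R_i$; note $p \le_T R$, so $p$ is dominated by a computable function $g$.

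Now I would diagonalize against $g$ by building a recursive set $E$ that $\+ C_R$ cannot decide correctly relative to the bound $g$. Concretely, partition $\NN$ into consecutive intervals $I_0 < I_1 < \dots$ with $I_k$ of length, say, $k+2$, chosen computably so that the intervals are long enough to defeat $g$; put $E = \bigcup_k (I_k \cap [\text{first half of } I_k])$, i.e.\ on each interval $E$ takes the lower half and $\overline E$ the upper half, with both halves infinite as $k$ grows. Since $\+ C_R$ is an ultrafilter base within the recursive sets, there is some $n$ with $R_n \subseteq^* E$ or $R_n \subseteq^* \overline E$; fix such an $n$ and the threshold $m$ beyond which the almost-inclusion is genuine inclusion. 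The associated function $p$ satisfies $p(j) \in R_n$ for all $j \ge n$, so for all large $j$ the value $p(j)$ lies entirely in $E$ (or entirely in $\overline E$, whichever side $R_n$ picked). But $E$ and $\overline E$ each contain arbitrarily long runs inside successive intervals: I can arrange the interval structure so that for infinitely many $k$ the element of $R_n$ that $p$ is forced to hit inside $I_k$ must be very far along — forcing $p$ to grow faster than $g$. More precisely, since $R_n$ is infinite and almost-contained in (say) $E$, it meets infinitely many intervals $I_k$, and on each such interval its least element beyond the previous one is at least the start of the lower half of $I_k$; by spacing the intervals so that $|I_k|$ grows faster than $g$ applied to the left endpoint of $I_k$, the step $p(k+1) - p(k)$ needed to stay inside $R_n \cap I_{k'}$ for the appropriate $k'$ exceeds $g(k)$ infinitely often. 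This contradicts $p(j) \le g(j)$ for all $j$.

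The cleanest way to organize the final contradiction is probably to not track $p$ directly but instead to note: from $R$ we can compute, for each recursive set $D$, an index for a recursive set $R_n$ with $R_n \subseteq^* D$ or $R_n \subseteq^* \overline D$ — wait, that ``decision'' is only $\exists n$, not uniform, so instead I would argue as follows. Using $R$ as oracle, search for the pair $(n,m)$ witnessing the decision of our designed $E$; this search is $R$-computable (it is a $\Sigma^0_1(R)$ fact that holds, so $R$ can find a witness). Having found $(n,m)$, the function $k \mapsto (\text{least element of } R_n \text{ inside the } k\text{-th interval it meets past } m)$ is $R$-computable and, by the interval spacing, grows faster than $g$ along a subsequence — or more simply, $p$ restricted past $n$ is an $R$-computable increasing function whose values are pinned inside $E$ (resp.\ $\overline E$), and the geometry of $E$ forces $p$ to be non-dominated by $g$, contradicting $p \le_T R$ and $R$ computably dominated.

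\emph{Main obstacle.} The delicate point is the combinatorial design of $E$ (equivalently, the interval lengths) so that membership of the tower-associated function's range in one fixed side of the partition genuinely forces super-$g$ growth, for \emph{every} possible choice of which side $R_n$ lands on and every threshold $m$ — since $g$ is only revealed after we must commit to $E$, whereas here the order is fine because $g$ bounds $p$ which is built from $R$; the real subtlety is that $E$ must be a \emph{single} recursive set working against the \emph{unknown} $n$. I expect this to come down to making both $E$ and $\overline E$ ``sparse in a controlled, accelerating way'': each contains, cofinitely, only elements whose gaps blow up, so that any infinite subset almost-contained in $E$ (or in $\overline E$) has gaps eventually exceeding any fixed computable bound. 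That sparsity-of-both-sides requirement is the crux, and it is exactly where one uses that $g$ (hence the needed acceleration rate) is fixed in advance by $R$, not by $E$.
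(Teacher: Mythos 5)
There is a genuine gap, and it sits exactly where you flag the ``crux''. Your plan needs the following combinatorial fact: a single computable set $E$ such that \emph{both} $E$ and $\ol E$ are sparse with gaps tending to infinity (so that an increasing function whose values are almost contained in either side is forced to outgrow $g$). No such set exists: if the gaps between consecutive elements of $E$ tend to infinity, then $\ol E$ contains arbitrarily long runs of consecutive integers, and conversely; at most one side of a partition of $\NN$ can be sparse in this sense. This is why your concrete design (lower half / upper half of long intervals) does not yield the contradiction you want: if $R_n \sub^* E$, the values of $p$ are confined to lower halves, but a lower half is a long block of consecutive integers, so $p$ can increase by $1$ at each step for a long time and never violate $p \le g$. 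The step ``$p(k+1)-p(k)$ exceeds $g(k)$ infinitely often'' is a non sequitur: nothing forces $p$ to move to a new interval at each successive argument. (A smaller side issue: finding the witness $(n,m)$ for $R_n \sub^* E$ is a $\Sigma^0_2(R)$ search, not $\Sigma^0_1(R)$, but this remark is not needed anyway.)

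The fix is to use domination in the opposite direction, and to aim the contradiction at the ultrafilter property rather than at growth of $p$ — which is what the paper's proof does. Your setup is the right one: $p \le_T R$ is the associated increasing function, and by computable domination fix a computable $f \ge p$. Now let $n_0=1$, $n_{k+1}=f(n_k)$, and $E = \bigcup_i [n_{2i}, n_{2i+1})$, which is computable. Since $p$ is increasing, $n_k \le p(n_k) \le f(n_k) = n_{k+1}$, so for every $k$ with $n_k \ge n$ the value $p(n_k) \in R_n$ lies in the interval $[n_k, n_{k+1}]$. Hence $R_n$ meets the even-indexed intervals and the odd-indexed intervals infinitely often, i.e.\ $R_n \not\sub^* E$ and $R_n \not\sub^* \ol E$ for every $n$, contradicting that $R$ is an ultrafilter base. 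Here the bound $f \ge p$ is used to guarantee that $p$ cannot skip over any interval, rather than to be outgrown by $p$; no sparsity of both sides is needed, which is precisely the obstruction your version cannot overcome.
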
 

\begin{proof} Let $p(n)$ be  an associated function as in Def.\ \ref{def:tower}, i.e. an increasing function $p$ such that   $p(n) \in \bigcap_{i\le n} R_i$. Then $ p \le_T R$. Assume that there is a computable function $f \ge p$. The conditions $n_0=1$ and $n_{k+1} = f(n_k)$ define  a computable sequence. So the set  
\[E= \bigcup_i [n_{2i}, n_{2i+1}) \]  is computable.
Clearly $R_n \not \sub^* E$ and $R_n \not \sub^* \ol E$ for each $n$.  So $R$ is not an ultrafilter base.  \end{proof} 

On the other hand,   each non-low set computes  a set in $\+ A \equiv_S \+ T$ as shown above, and of course a computably dominated oracle can be non-low.

\subsection{Co-c.e.\   ultrafilter bases}

Recall that no tower, and hence no ultrafilter base, is c.e. \begin{fact} Every co-c.e.\  ultrafilter base $G$ is high. \end{fact}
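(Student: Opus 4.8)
\emph{Plan.} The natural route is to reduce to a cohesiveness statement about an oracle below $G$ and then invoke (a variant of) Martin's theorem that maximal sets are high. First I would apply Fact~\ref{fact:diag} to the co‑c.e.\ tower $G$ to obtain an infinite co‑c.e.\ set $B \le_T G$ with $B \subseteq^* G_n$ for every $n$. Since $G$ codes an ultrafilter base, for each recursive $R$ there is $n$ with $G_n \subseteq^* R$ or $G_n \subseteq^* \overline R$; as $B \subseteq^* G_n$ this forces $B \subseteq^* R$ or $B \subseteq^* \overline R$. Thus $B$ is $r$‑cohesive, i.e.\ $\overline B$ is an $r$‑maximal c.e.\ set. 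Because highness is upward closed under $\le_T$ (if $B$ is high then $\ES'' \le_T B' \le_T G'$), it suffices to show that $B$ is high, equivalently that $B$ computes a function dominating every total computable function.

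For that I would work with a co‑c.e.\ approximation $B = \bigcap_s B_s$ and its settling‑time function $\mu(n) = \mu s\,[\, B_s\restriction(n{+}1) = B\restriction(n{+}1)\,]$, which is computable from $B$; note that $\mu$ only increases at elements of $\overline B$, since $n\in B$ implies $n\in B_s$ for all $s$ and hence $\mu(n)=\mu(n-1)$. The claim to prove is that $\mu$ dominates every total computable function. Assume not and fix a strictly increasing computable $g$ with $g(n)\ge\mu(n)$ for infinitely many $n$; then at such a stage $g(n)$ the membership in $B$ of every $x\le n$ is already correctly decided by $B_{g(n)}$. I would use this to construct a \emph{recursive} set $R$ that splits $B$, i.e.\ with $R\cap B$ and $B\setminus B$ — read $B\setminus R$ — both infinite: detect, for each $x$, whether $x$ currently ``looks like a stabilised element of $B$'' (each genuine element of $B$ is eventually detected at a good stage, and for each candidate index only finitely many detections can occur), and distribute detected elements alternately into $R$ and $\overline R$. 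Any such $R$ contradicts the $r$‑cohesiveness of $B$, so $\mu$ is dominant and $B$, hence $G$, is high.

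The hard part is precisely the splitting step: one must guarantee that infinitely many of the genuinely stabilised elements of $B$ land on \emph{each} side of $R$, even though the set of ``good'' $n$ is not known and ``false'' detections pollute the alternation — this is exactly the combinatorial core of Martin's maximal‑set‑is‑high argument transported to the $r$‑cohesive/tower setting. If one prefers a safer presentation, this step can be replaced by invoking as a black box that the complement of an $r$‑maximal set computes a dominant function. An alternative, structurally richer approach that sidesteps the subtlety is to use the whole uniformly co‑c.e.\ sequence $\{G_n\}$ at once: $G$ uniformly computes $\gamma_n=$ the $n$‑th element of $G_n$ together with the joint settling function $F(n)=\max_{i\le n}(\text{settling time of }G_i\text{ on }[0,\gamma_n])$, and if $G$ were not high some computable $g$ would beat $F$ infinitely often, revealing initial segments of every $G_i$ at the good stages and giving enough room to construct a recursive set that defeats the ultrafilter‑base property of $G$ directly. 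I expect the reduction to $r$‑maximality to be the cleanest to write up, with the domination argument being the one genuine obstacle.
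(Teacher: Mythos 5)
Your proposal is correct and follows the paper's own route: apply Fact~\ref{fact:diag} to obtain an infinite co-c.e.\ set $B \le_T G$ with $B \subseteq^* G_n$ for all $n$, observe that the ultrafilter-base property then forces $B \subseteq^* R$ or $B \subseteq^* \overline R$ for every computable $R$ (so $\overline B$ is $r$-maximal), conclude that $B$ is high, and pass highness up to $G$. The paper simply invokes as a black box that $r$-maximal sets are high --- exactly your ``safer presentation'' --- so the direct settling-time/splitting argument you sketch, while in the right spirit, is extra work the intended proof does not carry out.
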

\begin{proof} 
Let $B$ be a co-c.e.\ set as obtained in Fact~\ref{fact:diag}. Clearly $B \sub^* R$ or $B \sub^* \ol R$ for each computable $R$. Hence $\ol B$ is $r$-maximal, and therefore    high. Since $B \leT G$, $G$ is also high.
\end{proof}

Let us say for the moment that a tower  $F$ is a \emph{ultrafilter base for c.e.\ sets}  if for each \emph{computably enumerable} set $R$ we have  $F_n \sub^* \overline R$ or $F_n \sub^*  R$.  By Fact~\ref{fact:diag} there is no co-c.e.\   ultrafilter base for c.e.\ sets.  To  build a co-c.e.\ ultrafilter base we need to make use of the fact that we are given c.e.\ index for a computable  set and also one for its complement.

\begin{thm} There is   a co-c.e.\   ultrafilter base $F$. \end{thm}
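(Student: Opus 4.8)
The plan is to build $F$ as a co-c.e.\ tower by a priority construction, interleaving two kinds of requirements. The ``ultrafilter'' requirements $U_e$ will handle the $e$-th pair $(V_{2e}, V_{2e+1})$ from a u.c.e.\ listing designed so that whenever $V_{2e}$ is (the enumeration of) a computable set with complement enumerated as $V_{2e+1}$, the requirement forces some level $F_n$ to be almost contained in one of the two. The ``tower'' requirements $T_e$ will ensure $F_{e+1}\subseteq^* F_e$ and that $F_e - F_{e+1}$ stays infinite, so that $F$ genuinely codes a tower. As in Fact~\ref{fact:diag}, the obstruction there shows we \emph{cannot} diagonalize against all c.e.\ sets, so the construction must genuinely exploit that for a computable $R$ we are handed c.e.\ indices for both $R$ and $\overline R$: only when both ``sides'' of the pair actually exhaust $\NN$ do we get a computable set to split on, and then we get to choose which side to shrink into.

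Here is the shape I would give each $U_e$. Reserve an infinite computable ``column'' of levels for $U_e$ to act in, or more simply let $U_e$ own a single level index $n_e$ (increasing in $e$); the levels between consecutive owned indices are handled purely by the tower requirements by copying downward. At stage $s$, $U_e$ watches the enumeration of $V_{2e}$ and $V_{2e+1}$. Pick a large candidate element $x$ currently in $F_{n_e}$. As long as neither $V_{2e}$ nor $V_{2e+1}$ has enumerated enough to decide $x$, keep $x$ in. Once one of them, say $V_{2e}$, enumerates $x$, we \emph{commit to the $\overline{V_{2e}}$ side}: remove $x$ from $F_{n_e}$ (legal, since $F$ is co-c.e.), and henceforth whenever a new element shows up in $V_{2e}$ we remove it from $F_{n_e}$ too. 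Symmetrically if $V_{2e+1}$ acts first. The point is that if $V_{2e}$ and $V_{2e+1}$ really do enumerate a set $R$ and its complement $\overline R$, then every element gets decided, so after the commitment $F_{n_e}$ ends up $\subseteq^*$ one of $R,\overline R$ — whichever side we committed to. If instead the pair is garbage (e.g.\ $V_{2e+1}=\NN$ as a default when there's no genuine computable set to worry about), we still only ever removed a co-c.e.\ set of elements, and the standard bookkeeping keeps $F_{n_e}$ infinite.

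The interaction with the tower structure is the delicate part, and I expect it to be the main obstacle. We need $F_{m+1}\subseteq^* F_m$ for \emph{all} $m$, not just the owned indices, and we need each difference $F_m - F_{m+1}$ infinite. The natural fix: maintain $F_{m}$ for non-owned $m$ as essentially $F_{m-1}$ minus a single reserved ``thinning'' element, and when $U_e$ removes $x$ from its level $F_{n_e}$, cascade that removal upward to all $F_m$ with $m\ge n_e$ (to preserve $\subseteq^*$), but \emph{never} cascade downward. To keep $F_m - F_{m+1}$ infinite we give each level its own computable supply of protected witnesses that no higher-priority requirement is allowed to touch; since each $U_e$ only ever removes elements of the particular c.e.\ set it committed to, and we can always choose the protected witnesses for level $m$ outside the finitely many relevant c.e.\ sets that could still act (a finite-injury argument: only $U_e$ for $e$ with $n_e\le m$ matter, and each acts/commits at most once in a way that matters), infinitely many witnesses survive at every level. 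One then checks: $F\le_T\ES'$ or worse is \emph{not} claimed — we only need $F$ co-c.e., which is automatic since every action removes elements and never adds them. Finally, verification splits into the three clauses of Definition of $\+ U$: $F$ is co-c.e.\ (by construction), $\+ C_F$ is a tower (by the cascade discipline plus surviving witnesses), and $\+ C_F$ is an ultrafilter base at the recursive sets (given a computable $R$, feed a pair of c.e.\ indices for $R$ and $\overline R$ into the listing as some $(V_{2e},V_{2e+1})$ — here one should set up the u.c.e.\ listing $\seq{V_i}$ at the start to range over \emph{all} pairs of c.e.\ indices rather than the $W_e,\NN$ pattern — and note $U_e$ forces $F_{n_e}\subseteq^* R$ or $F_{n_e}\subseteq^*\overline R$).
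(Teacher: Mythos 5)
Your skeleton (a uniformly co-c.e.\ descending sequence $F_0\supseteq F_1\supseteq\cdots$, a listing of pairs of disjoint c.e.\ sets, tower requirements plus one ``split'' requirement per pair) is exactly the paper's, but the execution has a genuine gap: the split requirements cannot be handled by a one-shot, finite-injury commitment. Deciding which side of the pair the level $F_{n_e}$ may be confined to is a $\Pi^0_2$ question -- you must know both that $V_{2e}\cup V_{2e+1}=\NN$ and which side has infinite intersection with the (dynamically shrinking) level -- while removals from a co-c.e.\ set are irreversible, so a wrong commitment cannot be undone. Concretely, with your rule (remove the side that enumerates the candidate $x$): take $V_{2e}=\NN$, $V_{2e+1}=\emptyset$; the requirement is vacuously fine, but your strategy commits to deleting all of $V_{2e}$ from $F_{n_e}$, making that level (and every level above it) finite, killing the tower requirements. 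Reversing the orientation does not help: if the candidate lands in a finite $V_{2e}$ whose partner $V_{2e+1}$ is cofinite, keeping the candidate's side means deleting a cofinite set, with the same collapse. The ``protected witnesses'' patch cannot rescue this: you cannot computably reserve infinitely many elements outside an arbitrary c.e.\ set (its complement may be finite, or infinite but with no computable infinite subset), and any witnesses you do protect that happen to lie on the wrong side of a genuine partition $(R,\overline R)$ are permanent counterexamples to $F_{n_e}\sub^* (\text{chosen side})$, so protection directly contradicts the containment you are trying to force.

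This is precisely why the paper's proof is an infinite-injury tree argument rather than finite injury. There each node $\aaa$ at level $e$ has three outcomes: $0$ (the level will end up $\sub^* V_{e,0}$), $1$ ($\sub^* V_{e,1}$), and $2$ (the pair never covers $\NN$, so the requirement is vacuous). The node repeatedly reserves numbers from its stream $S_\aaa$ and, as each reserved number is decided into $V_{e,0}$ or $V_{e,1}$, funnels it into the stream of the corresponding child and prunes $F_{e+1}$ down to that stream; the level $F_{e+1}$ agrees up to finite difference with the stream of the true-path node, so which side ``wins'' is settled only in the limit as the leftmost outcome visited infinitely often, never by a single irrevocable choice. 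If you want to salvage your write-up, you need to replace the commitment step by this kind of $\Pi^0_2$ guessing (outcomes for each side plus a divergence outcome), together with a mechanism like the paper's streams guaranteeing both $F_{m}\setminus F_{m+1}$ infinite along the true path and $F_{e+1}\sub^*$ the winning side.
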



%

%
%
%
%
 \begin{proof}   
 We build the co-c.e.\ tower $F$ by providing uniformly co-c.e.\ sets $F_e$, $e \in \NN$ that form  a descending sequence: $F_e \supset_\infty F_{e+1}$.  If we remove $x$ from $F_e$ at a stage $s$ we also remove it from all $F_i$ for $i >e$, without further mention. 
 
 Let $\seq{(V_{e,0}, V_{e,1})}$ be an effective listing of the  pairs of disjoint c.e.\ sets.
The construction  will ensure that  the  following requirements are met. 
\bc $\+ M_e \colon F_e \setminus F_{e+1}$ is infinite. \ec
 
 \bc $\+ P_e \colon V_{e,0} \cup V_{e,1} = \NN \RA F_{e+1} \sub^* V_{e,0} \vee F_{e+1} \sub^* V_{e,1}$. \ec
 This suffices to establish that  $F$ is an ultrafilter base.

 The tree of strategies is $T=\{0,1,2\}^{< \infty}$. Each string $\aaa \in T$ of length $e$ is associated   with $M_e$ and also $P_e$. We write $\aaa \colon M_e$ and $\aaa \colon P_e$ to indicate that we view $\aaa$ as a strategy of the respective type.

 \medskip
 \n \emph{Streaming.} For each string  $\alpha\in T$, $|\aaa| = e$,  at each stage of the construction we have a  set $S_\alpha$, thought of as a stream of numbers used by $\aaa$.  Each time $\alpha$ is initialized, $S_\alpha$ is made empty and its content removed from $F_{e+1}$. Also, 
  $S_\alpha$ is enlarged only at stages at which $\alpha$ appears to be on
the true path. We will verify the following properties.
\begin{enumerate}
\item $S_\emptyset = \omega$;
\item if $\alpha$ is not the empty node then $S_\alpha$ is a subset of
$S_{\alpha^-}$ (where $\alpha^-$ is the immediate predecessor of $\alpha$);
\item At every stage  $S_\gamma \cap S_\beta = \ES$ for incomparable strings $\gamma$
and $\beta$;
\item at the time a number $x$ first enters $S_\alpha$, $x$ is   in $F_{e+1}$;
 and
\item if $\alpha$ is along the true path of the construction then $S_\alpha$
is an infinite computable set.
\end{enumerate}
   Thus $S_\alpha$ can be thought of as a set d.~c.~e.\ uniformly in $\alpha$; $S_\alpha$ is finite if $\alpha$ is to the left of the true path of the
construction; an infinite computable set if $S_\alpha$ is along the true path;
and empty if $\alpha$ is to the right of the true path.
 
 \medskip
  \n  \emph{Construction.} 
  
  \n Stage $0$. Let $\delta_0$ be the empty string.  Let $F_{e}= \NN$ for each $e$.  
 
  \n  \emph{Stage} $s+1$.     Let $S_{\ES,s}= [0,s)$.  
  
   \n  \emph{Substage $e<s$.} We suppose that  $\alpha= \delta_{s+1}\uhr e$ has been defined.   
  
  \medskip 
 \n Strategy  $\aaa \colon M_e$  removes  every other element of $S_\aaa$ from $F_{e+1}$.  Let $S'_\aaa$ denote the set of remaining numbers.  More precisely, if we have $S_\aaa = \{r_0< \ldots <r_k\}$,  then  $S'_\aaa$ contains the numbers of the form $r_{2i}$ while the numbers of the form $r_{2i+1}$ are removed from $F_{e+1}$.

   \medskip 
 \n Strategy  $\aaa \colon P_e$ picks the first applicable case below.
  
\n 1. \emph{Each    reserved number of $\aaa$ has been  processed.}

\n  If there is a  number $x$  from $S'_\aaa$ greater than $\aaa$'s last reserved number (if any) and greater than $s_0$, pick $x$ least and \emph{reserve} it.  Initialize $\aaa\ape 2$. Let $\aaa\ape 2$ be eligible to act next. 
  
  \n If (1) doesn't apply then  $\aaa$ has a reserved,  unprocessed number~$x$.
  
  \smallskip 
\n 2.    $[0,x] \sub V_{e,0} \cup V_{e,1}$  and $x \in V_{e,0}$.  
 
 \n Let $s_0$ be greatest stage $<s$ at which $\alpha$ was initialized. Add  $x$ to $S_{\aaa 0}$  and remove from $F_{e+1}$  all numbers in the interval $(s_0,x)$ which are not in
$S_{\aaa 0}$. Declare that    $\aaa$ has \emph{processed} $x$.
 Let ${\aaa \ape  0}$ be eligible to act next. 
 
   \smallskip
   
\n 3. $[0,x] \sub V_{e,0} \cup V_{e,1}$  and $x \in V_{e,1}$. 

\n  Let $s_0$ be greatest stage $<s$ at which $\alpha$ was initialized or $\aaa\ape 0$ was eligible to act. Add  $x$ to $S_{\aaa 1}$  and remove from $F_{e+1}$  all numbers in the interval $(s_0,x)$ which are not in
$S_{\aaa 1}$.  Declare that    $\aaa$ has \emph{processed} $x$.
 Let ${\aaa \ape 1}$ be eligible to act next.  
 
   \smallskip
   
\n 4. \emph{Otherwise.}

\n   Let $s_0$ be greatest stage $<s$ at which $\alpha$ was initialized or $\aaa  \ape 2$ was eligible to act. 
Let $S_{\aaa 2}= S'_\aaa \cap (s_0, s)$.  Let $\aaa\ape 2$ be eligible to act next. 

  \smallskip
 
 We   define $\delta_{s+1}(e)=i$ where  $\aaa \ape  i$, $0 \le i \le 2$, has been declared   eligible to act next.

   \medskip

  \n   \emph{Verification.} By construction and our convention above,  $F_e \supseteq F_{e+1}$ for each $e$, and $F$ is co-c.e.

  Let $g \in \tp \omega$ denote the true path, namely the  leftmost path in $\{0,1,2\}^\omega$ such that $\forall e \ex^\infty s \, [ g\uhr e \preceq \delta_s]$. In the following,  given $e$ let $\aaa = g \uhr e$, and let $s_\aaa$ be the largest stage $s$ such that $\aaa$ is initialized at stage $s$. We  verify a number of claims.

  \begin{claim}  The ``streaming" properties (1)-(5) hold. \end{claim}
   \n (1,2) hold by construction.
   
\n     (3)  Assume this fails for incomparable $\gamma, \beta$, so $x \in S_\gamma \cap S_\beta$ at stage $s$. We may as well assume that $\gamma= \aaa \ape i$ and $\beta = \aaa \ape k$ where $i<k$. By construction $k\le 1$ is not possible, so $k=2$. Since $x \in S_\aaa\ape i$ adn $i \le 1$, $x$ was reserved  by $\aaa$ at some stage $t\le s$. 
So $x$  can never  go into $S_{\aaa \ape 2}$ by the initialization of $\aaa\ape 2$ when $x$ was reserved.

  \n  (4) is  true by construction.

\n   (5) holds by definition of the true path, and because  $S_\aaa$ is enumerated in increasing fashion at stages  $\ge s_\aaa$.
  
  \begin{claim} \label{cl:3}
(i) $F_e \sub^* S_\aaa$. (ii)   $S_\aaa \sub^* F_e$.  \end{claim}
  \n The claim is verified by induction on $e$. It holds  for $e=0$ because $F_0= S_\ES= \NN$.
   Suppose the claim is true for $e$.  To verify it for  $e+1$, let $\gamma = g \uhr{e+1}$, and  let $s_\gamma$ be the largest stage $s$ such that $\gamma$ is initialized at stage $s$. 
   
\medskip
\n    (i) Suppose $x \in F_{e+1}$. Then $x \in F_e$,  so inductively $x \in S_\aaa$ for almost all such $x$. By construction,  any element $x$ that isn't promoted to $S_\gamma$ is also removed from $F_{e+1}$, unless $x$ is the last element $\aaa$ reserves. However,  in that case necessarily $\gamma = \aaa \ape 2$, so this leads to  at most one new element in $F_{e+1}\setminus S_\gamma$.

\medskip
\n  (ii)     
   Suppose $x \in S_\gamma$. Then $x \in S_\aaa$,  so inductively $x \in F_e$ for almost all such $x$. 
   At stage $s \ge s_\gamma$ an  element $x$ of $S_\aaa$  can not be removed from $F_{e+1}$ by a strategy $\beta >_L\aaa$ because $S_\beta\cap S_\aaa= \ES$ because of  (3) verified above, and since $\beta$ can only remove elements from $S_\beta$. So $x$ can only be  removed by a strategy  tied to~$\aaa$.

If    $   \aaa \colon M_e$ removes    $x$ from $F_{e+1}$, then $x \not \in S'_\aaa$,  contradiction.  
So by construction the only way $x$ can be removed from $F_{e+1}$  is by $\aaa \colon \+ P_e$, which for a sufficiently large  $x$ means that $x$  does not get promoted to   $S_\gamma$ either.
  
  \begin{claim}    $\+ M_e$ is met, namely, $F_e \setminus F_{e+1}$ is infinite. \end{claim}
 \n  To see this, recall  $\aaa = g \uhr e$. By the foregoing claim, the action of $\aaa:M_e$ removes infinitely many elements of $S_\aaa \sub F_e$  from $F_{e+1}$.

  \begin{claim} $\+ P_e$ is met. \end{claim}
  Suppose the hypothesis of $\+ P_e$ holds. Then every number $\aaa$ reserves eventually gets processed. So  either $g(e)= 0$, in which case $F_{e+1} \sub^* V_{e,0}$ by  Claim~\ref{cl:3}, or $g(e)= 1$, in which case $F_{e+1} \sub^* V_{e,1}$ by  Claim~\ref{cl:3}.
 \end{proof}
 
 Added Jan.\ 2020: Lempp, Miller, Nies and Soskova (in prep.) have shown that both $\+ U$ and the analog of the independence number are Medvedev equivalent to the mass problem of dominating functions. The paper
will be posted. 

              \section{Yu Liang}
\subsection{Concerning the Joint coding theorem:  a handwritten proof by Harrington}
The following theorem has appeared in \cite{CY15}.

\begin{theorem}\label{theorem: joint coding}
Given two uncountable $\Sigma^1_1$-sets $A_0, A_1$, for any real $z$, there are reals $x_0\in A_0$ and $x_1\in A_1$ so that $x_0\oplus x_1\equiv_h z\oplus \KO$.
\end{theorem}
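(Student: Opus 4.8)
The plan is to prove the Joint Coding Theorem by combining two ingredients: (i) the classical fact that every uncountable $\Sigma^1_1$ set contains a nonempty perfect subset, and on such a perfect tree one can code an arbitrary real into the hyperdegree of a branch in a controlled way; and (ii) a careful simultaneous fusion argument that does the coding in \emph{both} $A_0$ and $A_1$ at once, so that the join $x_0 \oplus x_1$ computes $z$ and $\KO$ hyperarithmetically, while not computing anything more. Throughout I would work with the representation of $\Sigma^1_1$ sets by trees on $\omega \times \omega$, or better, by the Gandy basis machinery: an uncountable $\Sigma^1_1$ set $A_i$ contains a perfect set $P_i$ which is $\Pi^1_1(\KO)$-presentable, and importantly its perfect tree can be taken with nice effectivity relative to $\KO$.

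First I would recall the \emph{one-sided} coding lemma: given a single uncountable $\Sigma^1_1$ set $A$ and a real $w$, there is $x \in A$ with $x \equiv_h w \oplus \KO$. The $\geq_h$ direction is the coding: one builds a perfect subtree of $A$'s Cantor-scheme in which the $n$-th splitting level is placed according to $w(n)$ and according to $\KO$-information, so that $x$ can recover both $w$ and $\KO$ by locating its own splittings; the splitting locations are chosen from a $\KO$-computable sequence of members of $A$ (Gandy--Harrington style), which is where $\KO$ necessarily enters. The $\leq_h$ direction uses that membership of $x$ in the generated perfect set, plus the $\Sigma^1_1$-ness of $A$, lets $\KO \oplus w$-computations reconstruct $x$: this is essentially a hyperarithmetic-in-$\KO\oplus w$ "Spector--Gandy" argument showing that $x$ is the unique branch through a $\KO\oplus w$-computable subtree.

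Next, to get the \emph{joint} statement, I would run the two constructions in parallel. Build decreasing sequences of perfect trees $T^0_s \subseteq A_0$, $T^1_s \subseteq A_1$, where at stage $s$ we act for one bit of the coding in one of the two sides, alternating, and we interleave the requirements so that the pair $(x_0, x_1)$ of branches through $\bigcap_s T^i_s$ satisfies: the odd/even bits of the "splitting pattern" of $x_0 \oplus x_1$ encode $z$, $\KO$ (say the whole join $z \oplus \KO$), and — crucially for the upper bound — nothing beyond $z \oplus \KO$ is coded, because each $T^i_{s+1}$ is obtained from $T^i_s$ by a $z\oplus\KO$-effective shrinking. Then $x_0 \oplus x_1 \geq_h z \oplus \KO$ by decoding, and $x_0 \oplus x_1 \leq_h z \oplus \KO$ because each $x_i$ is the unique path through a tree that is hyperarithmetic in $z \oplus \KO$ (here we again use that $A_i$ is $\Sigma^1_1$, so "being a path through $T^i_\infty$ lying in $A_i$" pins down $x_i$ uniquely).

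The main obstacle I expect is the \emph{upper bound}, i.e.\ ensuring $x_0 \oplus x_1 \leq_h z \oplus \KO$: the fusion must be carried out with all shrinking steps uniformly hyperarithmetic in $z \oplus \KO$, which forces us to use only $\KO$-recursive (not arbitrary $\Sigma^1_1$) basis choices when picking splitting nodes inside $A_0, A_1$. The delicate point is that $A_0$ and $A_1$ are merely $\Sigma^1_1$, so finding members to split on is a priori a $\Sigma^1_1$ search; one resolves this via the Gandy Basis Theorem (an uncountable $\Sigma^1_1$ set has a member of hyperdegree $\leq_h \KO$, and more uniformly a perfect set of such members) so that the whole perfect-tree skeleton of each $A_i$ is $\KO$-hyperarithmetic, after which all further bookkeeping is $z\oplus\KO$-recursive. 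Once that uniformity is in place, the two-sided decoding (lower bound) and the uniqueness-of-path argument (upper bound) both go through routinely, and the alternation guarantees neither side "runs out" of splitting room. I would present the construction as a single recursion in $z \oplus \KO$ producing the two fusion sequences, then verify the two hyperdegree inequalities as separate lemmas.
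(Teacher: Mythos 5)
There is a genuine gap, and it sits exactly where you declare things routine. The direction that your sketch treats as the delicate one, $x_0\oplus x_1\leq_h z\oplus\KO$, is in fact the easy one: once the whole fusion is organised as a recursion in $z\oplus\KO$, the two selected branches are recursive in $z\oplus\KO$ and a fortiori hyperarithmetic in it. The real content of the theorem is the lower bound, and your proposal gives no mechanism for it. Concretely, your first ingredient, the ``one-sided coding lemma'' (every uncountable $\Sigma^1_1$ set $A$ contains, for each $w$, some $x\in A$ with $x\equiv_h w\oplus\KO$), is false: take $A=\{x:\omega_1^x=\CK\}$. This set is $\Sigma^1_1$ (say ``every $x$-recursive wellordering is isomorphic to a recursive one'') and uncountable (it has full measure by Sacks), yet no member of it satisfies $\KO\leq_h x$, because $\KO\leq_h x$ implies $\omega_1^x\geq\omega_1^{\KO}>\CK$. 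So an uncountable $\Sigma^1_1$ set need not contain \emph{any} real of hyperdegree above that of $\KO$; with $A_0=A_1=A$ the theorem forces you to produce a pair neither member of which is $\geq_h\KO$ while the join is. Your coding scheme cannot deliver this: to ``locate its own splittings'' the pair would need the perfect trees, but the trees obtained from the effective perfect set theorem are only available relative to $\KO$ (or $z\oplus\KO$), which is precisely what is being decoded. Fixing $\KO$-recursive perfect trees inside the $A_i$ in advance and then coding bits into splitting positions gives no reason that $x_0\oplus x_1\geq_h\KO$, equivalently no reason that $\omega_1^{x_0\oplus x_1}>\CK$; arranging that is the heart of the proof, not a routine verification, and the Gandy/Kleene basis ingredients you invoke do not supply it.

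For comparison, the blog does not give a textual proof at all: it cites Chong--Yu for the result and reproduces Harrington's 1975 handwritten manuscript, whose argument is model-theoretic, precisely because making the \emph{pair} jointly recover $\KO$ (while neither branch need do so alone) requires a genuinely different device from naive coding into a pre-fixed $\KO$-recursive tree. If you want to rescue a fusion-style argument, the construction has to be arranged along an effective transfinite recursion of length $\CK$ so that the location of each coding site is hyperarithmetic in the part of $z\oplus\KO$ already decoded from the pair (a self-decodable, pointedness-like arrangement inside arbitrary uncountable $\Sigma^1_1$ sets); producing that bootstrapping is the missing idea, and it is what any correct proof must supply.
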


On 11/July/2019, Steve Simpson sent us a manuscript by Harrington dated on 9/1975 in which Theorem \ref{theorem: joint coding} was already proved. Harrington's proof is model theoretic and   certainly  deserves to be studied in detail. The manuscript follows.

\includepdf[pages={1,2,3,4,5,6,7,8,9,10,11,12,13}]{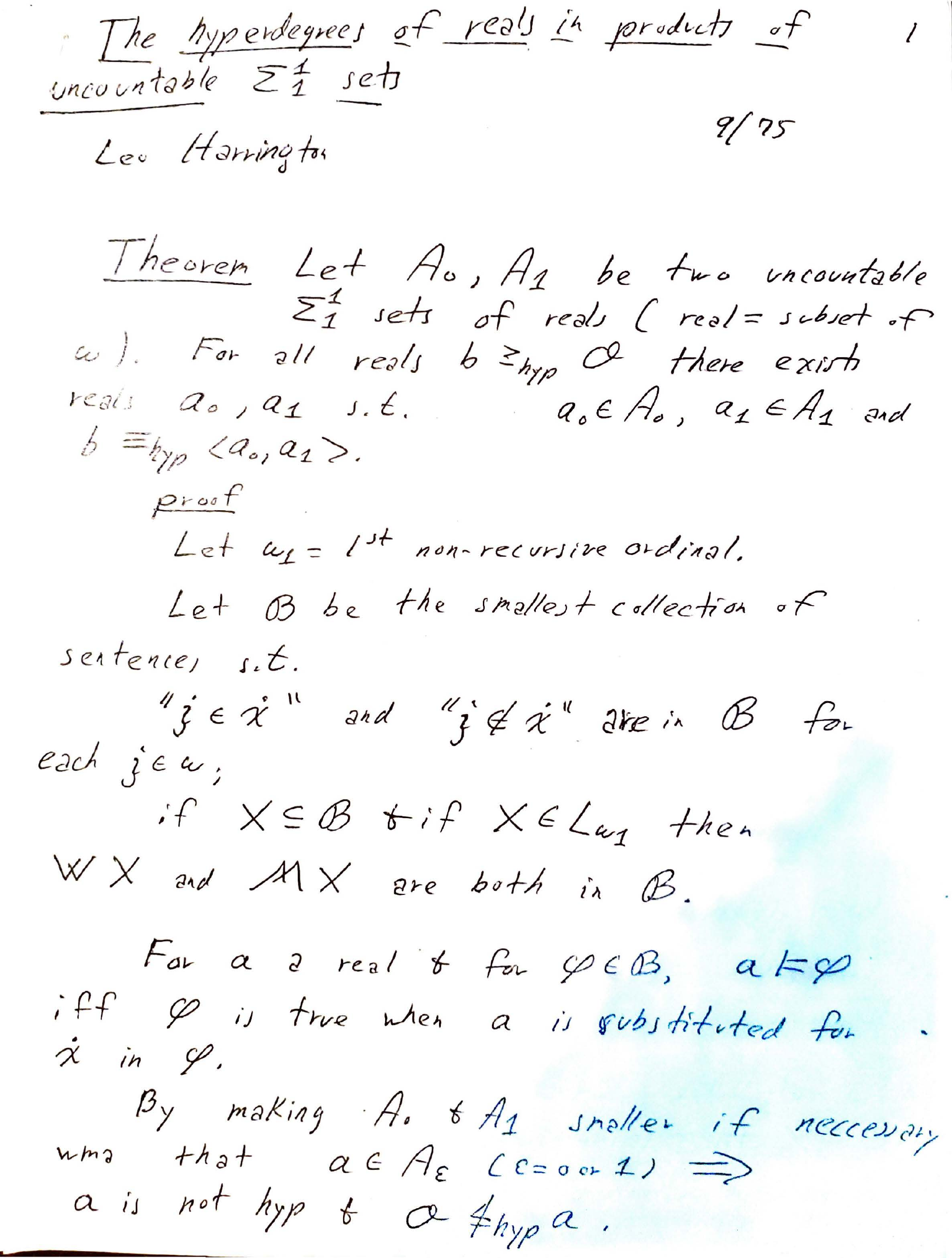}

\subsection{A note concerning the Borelness of upper cone of hyperdegrees.}

\begin{proposition}
The following statement is  independent from $ZFC$:  for any real $x$, the set $A_x=\{y\mid y\geq_h x\}$ is Borel.
\end{proposition}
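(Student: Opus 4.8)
The plan is to prove the dichotomy: \emph{for every real $x$, the cone $A_x=\{y : y\ge_h x\}$ is Borel if and only if $x\in L$}. Granting this, the proposition follows at once: ``for every real $x$, $A_x$ is Borel'' is then equivalent to ``every real is constructible'', which is true in $L$ and false after adding a Cohen real to $L$ (or in the presence of $0^{\#}$), hence independent of ZFC. For the reduction to this dichotomy, recall that the relation ``$u$ is $\Delta^1_1$ in $v$'' is $\Pi^1_1$ (standard effective descriptive set theory), so $A_x$ is $\Pi^1_1$ in the real parameter $x$; by Suslin's theorem ``$A_x$ is Borel'' is equivalent to ``$A_x$ is analytic'', i.e. to ``$A_x$ is $\Delta^1_1$ in some real''. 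Since $x\le_h y$ holds exactly when $x\in L_{\omega_1^y}[y]$ (where $\omega_1^y$ is the least $y$-admissible ordinal), I would put on $A_x$ the norm $\varphi(y)=$ the least $\xi$ with $x\in L_\xi[y]$; for $y\in A_x$ this $\xi$ is $<\omega_1^y<\omega_1$. One checks that $\varphi$ is a $\Pi^1_1$-norm on $A_x$, so by the boundedness theorem $A_x$ is Borel precisely when $\varphi$ is bounded below $\omega_1$ on $A_x$.

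If $x\in L$, let $\beta<\omega_1$ be least with $x\in L_\beta$. Then $x\in L_\beta\subseteq L_\beta[y]$ for \emph{every} $y$, so $\varphi(y)\le\beta$ for all $y\in A_x$: $\varphi$ is bounded and $A_x$ is Borel. Explicitly, $A_x$ is the union of the $\Delta^1_1$ set $\{y:\omega_1^y>\beta\}$, which is contained in $A_x$ once a real coding $\beta$ is fixed, together with the Borel set of those $y$ with $\omega_1^y\le\beta$ for which the length-$\beta$ relativized constructibility recursion produces $x$ below stage $\omega_1^y$.

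If $x\notin L$, I would show $\varphi$ is unbounded on $A_x$: given a countable $\gamma$, pick $w$ with $\omega_1^w>\gamma+\omega$ and, by a (transfinite, relativized) jump inversion with enough control, a real $y_\gamma\ge_T w$ such that $x\le_T y_\gamma^{(a)}$ for a $w$-notation $a$ of $\gamma+1$ while $x\not\le_T y_\gamma^{(b)}$ for notations $b$ of ordinals $\le\gamma$. Because $x\notin L$ it cannot enter $L[y_\gamma]$ ``for free'' at a bounded level, so this yields $x\le_h y_\gamma$ (hence $y_\gamma\in A_x$) with $x\notin L_\gamma[y_\gamma]$, i.e. $\varphi(y_\gamma)>\gamma$. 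Thus $\sup\{\varphi(y):y\in A_x\}=\omega_1$ and $A_x$ is properly $\Pi^1_1$, not Borel. Alternatively one can obtain $y_\gamma$ by applying the joint coding theorem to a perfect subset of the (Borel, comeager) set $\{y:x\notin L_\gamma[y]\}$, coding $x$ into $y_\gamma$ while keeping $y_\gamma$ inside that set.

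The main obstacle I expect is this last construction: realizing a hyp-cover $y_\gamma$ of $x$ into which $x$ is decoded only \emph{late} --- past stage $\gamma$ of $L[y_\gamma]$ --- while still decoded before $\omega_1^{y_\gamma}$, and doing this uniformly for all countable $\gamma$; one must control at once the ordinal $\omega_1^{y_\gamma}$ and the constructibility level of the decoding, which is exactly the delicate ``place a hyperdegree at a prescribed level of $L$'' phenomenon that the coding machinery around the joint coding theorem (and Harrington's methods generally) is designed to handle. A secondary point requiring care is checking that $\varphi$ genuinely is a $\Pi^1_1$-norm (so that boundedness is the right dividing line) and that the ``$\Delta^1_1$'' claims in the $x\in L$ case are uniform --- in particular, detecting the level $\omega_1^y$ inside the length-$\beta$ relativized constructible hierarchy.
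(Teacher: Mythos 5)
Your frame---the $\Pi^1_1$-norm $\varphi(y)=\min\{\xi : x\in L_\xi[y]\}$ on $A_x$, boundedness as the dividing line, and the easy direction $x\in L\Rightarrow A_x$ Borel---matches what the paper does implicitly, and the easy half is fine. The genuine gap is exactly the step you flag as the ``main obstacle'': for $x\notin L$ and arbitrarily large countable $\gamma$, producing a single real $y\geq_h x$ with $x\notin L_\gamma[y]$. Neither of your two suggested routes closes it. The ``jump inversion with enough control'' is not an available tool but precisely the delicate late-coding construction that would need to be proved. And the joint coding theorem, as stated, gives $x_0\in A_0$ and $x_1\in A_1$ with $x_0\oplus x_1\equiv_h x\oplus\mathcal{O}$; the real that lands in the cone above $x$ is the \emph{join} $x_0\oplus x_1$, and membership of $x_0$ and $x_1$ separately in the comeager Borel set $\{y: x\notin L_\gamma[y]\}$ does not put the join in that set (that set is not closed under joins---indeed the whole point is that the join codes $x$). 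So the hard direction of your dichotomy remains unproved; note also that the paper itself only asserts the full dichotomy as a remark with a sketch, and that for independence it is both unnecessary and harder than what is needed.

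The paper's actual proof sidesteps the late-coding problem by choosing the witness $x$ wisely rather than handling all $x\notin L$. Work in a model where $\omega_1=\omega_1^L$ and there is an $L$-random real $r$, and take $x=r$. If $A_r$ were Borel, boundedness gives a countable $\alpha$ with $r\in L_\alpha[z]$ for all $z\in A_r$. Pick $y\in L$ with $\omega_1^y>\alpha$ (this uses $\omega_1^L=\omega_1$), and by Martin's theorem a $\Delta^1_1(y)$-Sacks generic $z$ with $y\oplus z\geq_h r$. The punchline is that one does not need to control \emph{where} $r$ enters $L[y\oplus z]$: since $r$ remains random over $L_\alpha[y\oplus z]$, it simply cannot lie in that countable level at all, contradiction. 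Randomness preservation under Sacks genericity does automatically the job that your construction would have to do by hand. If you want to salvage your more ambitious dichotomy, you should either import this randomness trick (which only covers random $x$) or supply an actual proof of the prescribed-level coding lemma, e.g.\ along the lines of the paper's follow-up note (realizing $\omega_1^y=\beta$ with cone avoidance and then forcing $x$ into the cone above level $\beta$); as written, that lemma is assumed, not proved.
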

\begin{proof}
Clearly if $V=L$, then  the set $A_x=\{y\mid y\geq_h x\}$ is Borel for any real $x$.

Now suppose that $\omega_1=(\omega_1)^L$ and there is an $L$-random real $r$. We prove that $A_r$ is not Borel. Otherwise, there is a countable ordinal $\alpha$ so that $$\forall z (z\in A_r \implies r\in L_{\alpha}[z]).$$

Let $y\in L$ be a real so that $\omega_1^y>\alpha$. By Martin's theorem relative to $y$, there is a $\Delta^1_1(y)$-Sacks generic real $z$ be a so that $z\oplus y\geq_h r$. Then $r$ is $L_{\alpha}[y\oplus z]$-random and so $r\not\in L_{\alpha}[y\oplus z]$ but $r\leq_h y\oplus z$, a contradiction.
\end{proof}

\n {\bf Note}: Actually it can be proved that $A_x$ is Borel if and only if $x\in L$. The idea is as follows: Suppose that $\alpha$ is a countable ordinal and $x\not\in L$. Let $\beta>\alpha$ be a countable admissible ordinal. Then there is a real $y$ so that $\omega_1^y=\beta$ but $x\not\leq_h y$. Then one can prove that there is a real $z\geq_T y$ so that $z>_h y$ but   $x\not\in L_{\beta}[z]$.

   
 \part{Model theory and definability}

      \section{First-order logic, computability, and enriched structures}

A structure in mathematical logic is a non-empty domain (a set)  with some relations and functions defined on it. Examples are graphs, and groups. First-order logic is made for dealing with them: the relations and functions are described by atomics formulas, the quantifiers range over the set. When the domain is countably infinite, the notions of computability also work well with such structures: via some coding, one can see the domain elements as inputs to machines, and one requires that the relations and functions be computable.

Many kinds of  structures that mathematicians study are not of this kind. Often they  study ``enriched structures": a non-empty domain, some relations and functions on it, and something else, some extra structure. Examples are: 
\begin{enumerate} \item metric structures: there is a distance function producing a topology, the relations are closed, the functions continuous. 
\item  Hilbert spaces, with a $\mathbb C$-valued inner product.
\item $C^*$-algebras, which are Banach $*$-algebras  satisfying $||x x^*|| = ||x||^2$. 
\item tracial von Neumann algebras $N$, such as the hyperfinite $\mathrm{II}_1$-factor~$\+ R$. The extra structure is a \emph{trace}, a positive  functional sending $1_N$ to $1$, corresponding to the (normalised)  trace of matrices
\item  Lie groups:  groups on a manifold.
\item Profinite groups: a group on a compact, totally disconnected space with group  operations continuous.
\end{enumerate}
And these are often the most relevant for applications elsewhere: e.g.\ Hilbert spaces and operator algebras are used in quantum physics. Lie groups also play a major role in modern physics. 

\begin{remark} The last two examples   suggest that the proposed   view may be that of a  logician. The extra structure can  also  be viewed as the primary one, with  the algebraic structure sitting on top of it. For instance, Lie groups are group objects in the category of manifolds (which has Cartesian products); these group objects form   a new category where the morphisms are the Lie group homomorphisms.  Ditto for profinite groups, which are group objects on compact totally disconnected topological spaces.   \end{remark}
How do we extend the methods of first-order logic, and of computability, to such structures? 
There are two pathways. 

\smallskip
\n 
A. The first pathway, not followed here, is  to also extend the language, or to adapt  the notion of computability to the new setting. For the language, this has been done in many of the cases above. E.g., continuous logic has been introduced to deal with metric structures, and also with tracial von Neumann algebras. Relations are now real valued, and  the distance function plays the role that formerly  equality had played.
For topological groups, one can use a two-sorted language: one sort for the elements, and another for the open sets, say.

For computability, there are Blum-Shub-Smale (BSS)    machines that directly work on reals, as an example.

\smallskip
\n
B. The second pathway is to     retain the bare tools of   first-order logic and computability  developed for     the naked structures. The approach to the enriched structures  will then necessarily be indirect. We still express things about, or compute with, the elements of the domain with their basic relationships; the expressivity is enhanced only because the semantics is different. For instance, one can  study the  expressivity of first-order logic in profinite groups. If the theory determines the group within a class of groups,  this is called quasi-axiomatisable (QA) relative to the class. Jarden and Lubotzky~\cite{Jarden.Lubotzky:08} (also  see \cite[Thm.\ 4.2.3]{Segal:09}) showed that each topologically f.g.\ profinite group is QA within the profinite groups.   Nies,  Segal and Tent \cite{Nies.Segal.Tent:19} show that for many profinite  groups there is in fact a single sentence determining it up to topological isomorphism within the  class of profinite groups. This property is called  finitely axiomatizable (FA) relative to the class.  Examples of FA groups  include $SL_n(\mathbb Z_p)$ for any odd prime~$p$ not dividing $n$.  

In the computability setting, the problem is that many of the structures are now uncountable, e.g.\ Polish metric spaces. We have to reduce these enriched structures to a countable core, which can be  done by choosing a dense countable subset $D$. In this way we can define computable metric spaces: each element is given a the limit of a certain fast converging Cauchy sequence of elements of $D$. For  profinite groups the most natural way to extend computability to the whole structure is by  asking that the group be  an effective inverse limit of finite groups with onto projections.  This is e.g. the case for the additive group of $p$-adics,  $\ZZ_p = \projlim_n C_{p^n}$. One also gets an ultrametric out of this, relative to which the group operations are computable.   

In the case of the von Neumann hyperfinite   $\mathrm{II}_1$ factor  $\+ R$ it is easy to determine the canonical computable structure as a metric space. $\+ R$ can be introduced as the completion of the $*$-algebra $\bigcup_n M_{2^n}(\mathbb C)$ with respect to norm given by the inner product $\la A ,B \ra = \tau_n (B^* A)$, where $\tau_n$ is the normalized trace given by $\tau_n(C)= \tp{-n} \sum_i C_{ii}$; see e.g.\ the Goldbring 2013 notes~\cite{Goldbring:13}. 

Consider a class $\+ C$ of enriched structures. To structures in $\+ C$ can be   algebraically isomorphic, and fully isomorphic, i.e.\  also the extra structure is preserved. How much stronger is the second of  these notions? For Polish groups (topological groups where the topology is Polish), any Baire measurable isomorphism is continuous. So, if the Axiom of Determinacy (AD) holds (which contradicts the axiom of choice), then there is no difference. 
So, to build profinite groups that are only algebraically isomorphic one needs to use the axiom of choice.   For instance let $A$  be the product of all cyclic groups of order $p^{n}$, $ n \in \NN - \{0\}$.  Then $A$  is  algebraically isomorphic to $A \times  \ZZ_{p}$  by  Cor 3.2 in Kiehlmann (J. Group Theory 16 (2013), 141-157), which uses AC.  They
are not topologically isomorphic because the torsion subgroup is dense in $A$ but not in $A \times \ZZ_p$.  As alternative to Kiehlmann's proof one can use Ulm invariants for abelian p groups.   One can show the dual fact  in abelian torsion groups, that $\bigoplus_{n} C_{p^{n}} \cong\bigoplus_{n}
C_{p^{n}} + C_{p^{\infty}}$. To make Ulm invariants work one again needs AC (in the guise of the  well ordering
theorem).

If an enriched  structure $M$  is quasi-axiomatizable within $\+ C$ then each algebraic isomorphim with another structure in $\+ C$ is automatically a full isomorphism. This can be used to find examples of structures $M$ that are not QA in their class. E.g.\ Kiehlmann's group $A$ is not QA in the profinite groups (assuming AC).

%
%
 
 \def\cprime{$'$} \def\cprime{$'$}

%
%

\end{document}